\newtheoremstyle{mythm} 
{6pt}
{6pt}
{\it}
{}
{\bf}
{.}
{.5em}
{}
\newtheoremstyle{mydef}
{6pt}
{6pt}
{}
{}
{\bf}
{.}
{.5em}
{}
\newtheoremstyle{myrem}
{6pt}
{6pt}
{}
{}
{\bf}
{.}
{.5em}
{}
\newtheoremstyle{myex}
{6pt}
{3ex}
{}
{}
{\bf}
{.}
{.5em}
{}
\theoremstyle{mythm}
\newtheorem{theorem}{Theorem}[section]
\newtheorem{lemma}[theorem]{Lemma}
\newtheorem{proposition}[theorem]{Proposition}
\theoremstyle{mydef}
\newtheorem{definition}[theorem]{Definition}
\theoremstyle{myrem}
\newtheorem{remark}[theorem]{Remark}
\numberwithin{equation}{section}
\theoremstyle{myex}
\newtheorem{example}[theorem]{Example}
\newcommand{\Z}{\mathbb{Z}}
\newcommand{\Aut}{{\rm Aut}}
\newcommand{\rank}{{\rm rank}}
\def\gesy#1{\underline{\mathbf{#1}}}
\def\sz#1{\left|#1\right|}
\def\calA{{\mathcal A}}
\def\calR{{\mathcal R}}
\def\Oof{{\mathcal O}\/}
\def\F{{\mathbb F}}
\newcommand{\fox}[2]{\tfrac{\partial #1}{\partial #2}}
\newcommand{\foxb}[2]{\tfrac{\partial(#1)}{\partial #2}}  
\newcommand{\calm}{\mathcal{M}}
\newcommand{\calmhep}{\calm_{H,p,e}}
\DeclareMathOperator{\rad}{rad}
\newcommand{\ee}{{\text{\bf e}}} 
\newcounter{ithmcount}
\newenvironment{items}{
\begin{list}{$\alph{item})$}
{\labelwidth30pt \leftmargin30pt \topsep3pt \itemsep2pt \parsep0pt}}
{\end{list}}
\newenvironment{iprf}{\begin{list}{{\rm
	\alph{ithmcount})}}{\usecounter{ithmcount}\labelwidth-5pt
      \leftmargin0pt \topsep3pt \itemsep1pt \parsep2pt}}{\qedhere\end{list}}
\newenvironment{ithm}{\begin{list}{{\rm \alph{ithmcount})}}{\usecounter{ithmcount}\labelwidth18pt
      \leftmargin18pt \topsep3pt \itemsep1pt \parsep2pt}}{\end{list}}
\renewcommand{\leq}{\leqslant}
\renewcommand{\geq}{\geqslant}
\begin{document}

\vspace*{-1.8cm}

\title{Universal covers of finite groups}
  
\author[H.~Dietrich]{Heiko Dietrich}
\address{School of Mathematics, Monash University, VIC 3800, Australia}
\email{heiko.dietrich@monash.edu}
\author[A.~Hulpke]{Alexander Hulpke}
\address{Department of Mathematics, Colorado State University, Ft.~Collins, CO 80523, USA}
\email{hulpke@colostate.edu}

\begin{abstract}  
Motivated by the success of quotient algorithms, such as the well-known $p$-quotient or
solvable quotient algorithms, in computing information about finite groups, we describe how to compute finite extensions $\tilde
H$ of a finite group $H$ by a direct sum of isomorphic simple $\Z_p
H$-modules such that $H$ and $\tilde H$ have the same  number of generators.
Similar to other quotient algorithms, our description will be via a
suitable covering group of $H$. Defining this covering group requires a
study of the representation module, as introduced by Gasch\"utz in 1954. Our
investigation involves so-called Fox derivatives (coming from free
differential calculus) and, as a by-product, we prove that these can be
naturally described via a wreath product construction. An important application of our results is that they can be used to compute, for a given epimorphism $G\to H$ and
simple $\Z_p H$-module $V$, the largest quotient of $G$ that maps onto $H$
with kernel isomorphic to a direct sum of copies of $V$. For this we also provide a
description of how to compute second cohomology groups for the (not
necessarily solvable) group $H$, assuming a confluent rewriting system for
$H$. To represent the corresponding group extensions on a computer, we
introduce a new hybrid format that combines this rewriting system with  the polycyclic presentation of the module.
\end{abstract} 

\maketitle

\vspace*{-0.6cm}

\section{Introduction}
\noindent There are three well-established ways to describe a group for a
computer: permutations, matrices, and presentations. A detailed account on how to compute with groups is given in  the books \cite{handbook,Seress,simsbook}.
Finite presentations, that is, a finite set of generators together with  a finite
set of relators, are often a natural and compact way to define groups. For groups given in this form, effective algorithms exist for special kinds of presentations (such as polycyclic
presentations) and certain tasks (such as computing abelian invariants).
In general, however, due to the undecidability of the word problem for groups (Novikov-Boone Theorem), many problems
have been shown to be algorithmically undecidable. What one can do, based on von Dyck's Theorem, is to  attempt to investigate such a group via its quotients. This is the idea of so-called \emph{quotient
algorithms}, and the main motivation of this paper. 

Let $G$ be a finitely presented group and let $\varphi\colon G\to H$ be an epimorphism onto a finite group. By the isomorphism theorem, $G/\ker\varphi\cong H$, so the structure of $H$ has implications for  $G$. For example, if $H$ is non-trivial, then this proves that  $G$ is non-trivial -- something which is in general undecidable for finitely presented groups. In practice, one attempts to find epimorphisms from $G$ onto groups $H$ that allow practical computations, for example, permutation or polycyclic groups.

The aim of quotient algorithms is to find (largest) quotients of $G$ with
certain properties. For example, the largest abelian quotient of $G$ is
$G/G'$, where $G'=[G,G]$ is the derived subgroup; the computation of $G/G'$
is straightforward via a Smith-Normal-Form calculation. The well-known
$p$-quotient algorithm of Macdonald \cite{macdonald}, Newman \& O'Brien \cite{pquotient}, and Havas \& Newman \cite{havas} attempts to construct, for a user-given prime $p$, the largest quotient of $G$
that is a  finite $p$-group, we refer to \cite[Section 9.4]{handbook} for a detailed
discussion and references; see also Remark \ref{remPGen} below. Often such a
largest quotient does not exist, so the algorithm takes as input a bound on
the nilpotency class of the $p$-quotient that one wants to construct. For a
discussion of other quotient algorithms we refer to \cite[Section
9.4.3]{handbook}. For example, using a similar approach as the $p$-quotient
algorithm, the nilpotent quotient algorithm of Nickel \cite{nickelnq} tries to compute
the largest nilpotent quotient of $G$. Solvable quotient algorithms, such as
described by Plesken~\cite{ples87}, Leedham-Green~\cite{leedham84},
and Niemeyer~\cite{niem94}, attempt to
construct solvable quotients of $G$ as iterated extensions; generalisations
to polycyclic quotients exist, see Lo \cite{lo}. For the case of non-solvable groups $H$, the $L^2$-quotient algorithm \cite{ples09}, and generalizations in~\cite{bridsonetal19}, find quotients
that are (close to) simple groups in particular classes, but these algorithms do not consider lifts to larger quotients.
The concept of lifting epimorphisms by a module, using a presentation for
the factor group to produce linear equations that yield 2-cocycles, is
suggested already in~\cite{ples87} and used for the case of
non-solvable groups in~\cite{holt89}. However, none of these algorithms provides a description
of an iterated lifting algorithm for arbitrary non-solvable quotients. Moreover, not all suggested approaches are available in general-purpose implementations (like the $p$-quotient algorithm is).  

We describe a new approach for non-solvable quotients that follows the iteration strategy used in some solvable quotient algorithms. Given an
epimorphism $\varphi\colon G\to H$ onto some finite group, we aim to extend it (if possible)  to a larger quotient of $G$ via an epimorphism
$\alpha\colon G\to K$ that satisfies $\ker\alpha\leq\ker\varphi$, that is, $\alpha$ factors through $\varphi$.  We assume that $\ker\varphi/\ker\alpha$ is a
finite semisimple module for $H$, so by an iteration we can discover \emph{any}
quotient of $G$ that is an extension of $H$ with a finite solvable subgroup. This approach assumes that the non-solvable part of the required quotient of $G$ has  been supplied as input, which mirrors the view of the \emph{solvable radical paradigm}, see \cite[Section~10.3]{handbook}.
This paradigm has been  used successfully in modern algorithms for
permutation or matrix groups, and relies on the fact that  every finite non-solvable group is an extension of 
a solvable normal subgroup (the \emph{radical}) with a
Fitting-free factor group (not affording any non-trivial solvable normal subgroup). We indicate in Section~\ref{secInitialQuot} how such an initial
epimorphism $\varphi$ can be found.

\subsection{Main results} In the following, $e$ is a positive integer and $p$ is a prime. We say a group is \emph{$e$-generated} if it can be generated by $e$ elements. Our first result is the following.

\begin{theorem}\label{thmHPE1}
Let $H$ be a finite $e$-generated group. There is a finite $e$-generated group $\hat H_{p,e}$, called the $p$-cover of $H$ of rank $e$, such that $\hat H_{p,e}$ is an extension of $H$ with an elementary abelian $p$-group, and  any other such $e$-generated extension of $H$ is a quotient of $\hat H_{p,e}$. 
\end{theorem}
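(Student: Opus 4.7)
The plan is to construct $\hat H_{p,e}$ explicitly from a free presentation of $H$ and then verify its universal property using Gasch\"utz's classical lemma on lifting generating tuples through abelian normal subgroups.

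First, fix a free group $F$ of rank $e$ together with an epimorphism $\pi\colon F \to H$; such a $\pi$ exists because $H$ is $e$-generated. Let $R = \ker \pi$. By the Nielsen-Schreier formula, $R$ is free of rank $(e-1)|H|+1$, so
\[
M := R/R'R^p, \qquad R' = [R,R],
\]
is a finite elementary abelian $p$-group. Define
\[
\hat H_{p,e} := F/R'R^p.
\]
This is an extension of $H = F/R$ by the elementary abelian $p$-group $M$, and thus finite. It is generated by the images of the $e$ free generators of $F$, so it is $e$-generated; conjugation makes $M$ into a $\Z_p H$-module.

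Next, verify the universal property. Let $1 \to V \to \tilde H \to H \to 1$ be any $e$-generated extension of $H$ with $V$ elementary abelian of exponent $p$. Since $V$ is an abelian normal subgroup of $\tilde H$ with $\tilde H/V \cong H$, and both $\tilde H$ and $H$ are $e$-generated, Gasch\"utz's lemma applies: the generating tuple $(\pi(x_1),\ldots,\pi(x_e))$ of $H$ lifts to a generating $e$-tuple $(\tilde h_1,\ldots,\tilde h_e)$ of $\tilde H$. Sending each $x_i \mapsto \tilde h_i$ then yields an epimorphism $\tilde\pi\colon F \to \tilde H$ that lifts $\pi$. Consequently $\tilde\pi(R) = V$, and because $V$ has exponent $p$ and is abelian, $\tilde\pi$ kills $R'R^p$. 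Hence $\tilde\pi$ factors through $\hat H_{p,e} = F/R'R^p$, exhibiting $\tilde H$ as a quotient of $\hat H_{p,e}$ over $H$.

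The main obstacle is the lifting step: without Gasch\"utz's lemma, an arbitrary surjection $F \to \tilde H$ need not cover the chosen $\pi\colon F \to H$, and the factoring argument collapses. The fact that the lifting works precisely when $\tilde H$ is generated by no more elements than $H$ explains why the construction is natural only for a fixed rank $e$, and also why the number of required generators is preserved under passage to the cover.
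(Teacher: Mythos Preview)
Your proof is correct and follows essentially the same route as the paper's own argument (Theorem~\ref{thmCover}): define $\hat H_{p,e}=F/R'R^{[p]}$, then invoke Gasch\"utz's lifting lemma (Lemma~\ref{lemMAN}) to factor any $e$-generated elementary-abelian-$p$ extension of $H$ through it. One small remark: Gasch\"utz's lemma requires the normal subgroup to be \emph{finite}, not merely abelian; this holds here automatically since $V$ has finite index in the finitely generated group $\tilde H$ and is elementary abelian, but it is worth stating the hypothesis correctly.
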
 

This result is proved in Theorem \ref{thmCover} based on a result of
Gasch\"utz. If $H$ is given as a finitely presented group, say $H=F/M$ with
$F$ free of rank $e$, then $\hat H_{p,e}$ can be defined as
$F/[M,M]M^{[p]}$, where $M^{[p]}$ denotes the subgroup of $M$ generated by all $p$-th powers. However, the Nielsen-Schreier Theorem shows that the
kernel of the projection $\hat H_{p,e}\to H$ is an elementary abelian
$p$-group of rank ${1+(e-1)|H|}$, which makes an explicit construction of
$\hat H_{p,e}$ as a finitely presented group, following this definition, infeasible in practice. In
Section \ref{seccoverconst} we therefore discuss an alternative description of $\hat H_{p,e}$,
using Fox derivatives, see Theorem \ref{thmPhi} for details.

While the
definition of $\hat H_{p,e}$ is straightforward, it is the new construction in Theorem \ref{thmPhi} that is our first main result. We do not explain it here, because this would require notation given in Section~\ref{seccoverconst}.

To  make our approach feasible in practice, we consider
a further reduction: We say a $\Z_p H$-module $A$ is \emph{$V$-homogeneous} if $A$ is a direct sum of finitely many copies of a simple $\Z_p H$-module $V$. In Section~\ref{secLiftEpi} we provide some references for the construction of simple $\Z_p H$-modules;  all modules we consider here are finite-dimensional.

\begin{theorem}\label{thmHVE1}
Let $H$ be a finite $e$-generated group with simple $\Z_pH$-module $V$. There is a finite $e$-generated group $\hat H_{V,e}$, called the $(V,e)$-cover of $H$, such that $\hat H_{V,e}$ is an extension of $H$ with a $V$-homogeneous module, and any other such $e$-generated extension of $H$ is a quotient of~$\hat H_{V,e}$.
\end{theorem}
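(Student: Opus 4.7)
I would construct $\hat H_{V,e}$ as a quotient of the $p$-cover $\hat H_{p,e}$ supplied by Theorem \ref{thmHPE1}. Write the extension $1 \to M \to \hat H_{p,e} \to H \to 1$; the kernel $M$ is elementary abelian, and conjugation by $H$ endows it with the structure of a $\Z_p H$-module. The target $\hat H_{V,e}$ will be obtained from $\hat H_{p,e}$ by factoring out a distinguished $\Z_p H$-submodule $N \leq M$.

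To define $N$, note that since $\Z_p H$-modules need not be semisimple when $p$ divides $|H|$, no isotypic-component decomposition of $M$ is directly available. Instead, let $N$ be the intersection of the kernels of all $\Z_p H$-homomorphisms $M \to V$; equivalently, $N$ is the intersection of all submodules $U \leq M$ with $M/U \cong V$. The diagonal map $M \to \prod_{f \in \mathrm{Hom}_{\Z_p H}(M,V)} V$, $m \mapsto (f(m))_f$, has kernel exactly $N$, so $M/N$ embeds into a direct sum of copies of $V$; since any submodule of a $V$-homogeneous semisimple module is itself $V$-homogeneous (it is semisimple, and its composition factors are all isomorphic to $V$), it follows that $M/N$ is $V$-homogeneous.

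Setting $\hat H_{V,e} := \hat H_{p,e}/N$, we obtain an extension $1 \to M/N \to \hat H_{V,e} \to H \to 1$ with $V$-homogeneous kernel. Since $N \leq M \leq \hat H_{p,e}$ and $\hat H_{p,e}$ is $e$-generated, so is $\hat H_{V,e}$. For universality, let $K$ be any $e$-generated extension $1 \to L \to K \to H \to 1$ with $L \cong V^k$. Theorem \ref{thmHPE1} yields a surjection $\pi \colon \hat H_{p,e} \to K$ compatible with the projections to $H$, which restricts to a surjection $M \to L$ whose kernel $N'$ satisfies $M/N' \cong V^k$. Composing with the $k$ coordinate projections $V^k \to V$ gives homomorphisms $M \to V$ with kernels $U_1,\dots,U_k$ satisfying $M/U_i \cong V$ and $N' = U_1 \cap \cdots \cap U_k$. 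Each $U_i$ contains $N$ by the very definition of $N$, hence $N \leq N'$, so $\pi$ factors through $\hat H_{V,e}$, exhibiting $K$ as a quotient of $\hat H_{V,e}$.

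The main obstacle is precisely the failure of semisimplicity of $\Z_p H$-modules when $p$ divides $|H|$, which rules out the naive approach of singling out a $V$-isotypic summand of $M$. The universal object is instead produced by the dual construction via the intersection of kernels of all maps into $V$, which works without any semisimplicity hypothesis and still delivers the required universal property.
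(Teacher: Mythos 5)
Your construction is exactly the paper's: your $N$ is precisely $V(\calmhep)$, the smallest submodule of the representation module with $V$-homogeneous quotient, and the paper likewise defines $\hat H_{V,e}=\hat H_{p,e}/V(\calmhep)$ (Definition \ref{defHomCompNew}), deducing the universal property from that of $\hat H_{p,e}$ (Theorem \ref{thmCover}) just as you do. Your argument is correct; you simply spell out, via the intersection-of-kernels description and the factorization of the epimorphism, what the paper records as ``by construction.''
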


In principle, one can construct $\hat H_{V,e}$ from $\hat H_{p,e}$, however, doing so would not resolve the issue that $\hat H_{p,e}$ is often too big in practice. Instead, we describe a direct construction. For this we show  that $\hat H_{V,e}$ is  a subdirect product of a split and a non-split
part, see Theorem \ref{thmhomogext}. 
The former part can be  obtained as a modification of 
our construction for $\hat H_{p,e}$; we discuss this in Proposition \ref{propT}. The latter part can be obtained by the cohomological methods described in
Section~\ref{secComp}; this requires that we have a confluent rewriting system for $H$.  Based on those two parts,  in Theorem~\ref{thmHve} we provide a practically
feasible construction of $\hat H_{V,e}$. To avoid technical details, the result
is formulated here as an existence statement, but the proof will be
constructive.

\begin{theorem}\label{thmHve1} 
Let $H$ be a finite, finitely presented, $e$-generated group and let $V$ be a simple $\Z_p H$-module. If a basis of $H^2(H,V)$ is known, there is an algorithm to construct $\hat H_{V,e}$, see Theorem~\ref{thmHve}.
\end{theorem}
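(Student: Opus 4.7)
The plan is constructive: I build $\hat H_{V,e}$ as a subdirect product of a split extension and a non-split extension of $H$ by $V$-homogeneous modules, as identified in Theorem \ref{thmhomogext}. The multiplicities occurring in each of the two summands are determined by $e$ and by $\dim H^2(H,V)$, so once both components are produced explicitly their fibre product over $H$ realizes the $(V,e)$-cover characterised by the universal property in Theorem \ref{thmHVE1}.

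For the split summand I would invoke Proposition \ref{propT}, which already packages the split $V$-homogeneous part of the $e$-generated cover in a form suitable for computation. Concretely, the point is to \emph{avoid} building the elementary abelian kernel of $\hat H_{p,e}$, whose rank $1+(e-1)|H|$ is prohibitive; instead one works with its $V$-isotypic component through the Fox-derivative description of $\hat H_{p,e}$ given in Theorem \ref{thmPhi} and the wreath-product rewriting of Fox derivatives established earlier. This yields generators together with an explicit $\Z_pH$-action for a split extension of $H$ by a direct sum of copies of $V$, whose multiplicity is the one prescribed by Proposition \ref{propT}.

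The non-split summand is built from the given basis of $H^2(H,V)$. Since $H$ is supplied together with a confluent rewriting system, the cohomological machinery of Section \ref{secComp} converts each basis class into an explicit normalised $2$-cocycle $H\times H\to V$; stacking these gives the universal non-split extension whose kernel is $\dim H^2(H,V)$ copies of $V$. The $2$-cocycle values define the multiplication rules of the extension group directly in the hybrid rewriting/polycyclic format announced in the abstract, so the output is represented in a way that permits effective subsequent computation.

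Finally, I would form the fibre product of the split and non-split extensions over $H$. By Theorem \ref{thmhomogext} this subdirect product is isomorphic to $\hat H_{V,e}$, and by construction every structural datum -- generators, $H$-action on the kernel, cocycle values -- has been produced by an effective procedure; the combined pipeline is what Theorem \ref{thmHve} asserts. The main obstacle is bookkeeping rather than new mathematics: one has to carry out every step intrinsically in the hybrid presentation, never instantiating $\hat H_{p,e}$ itself, and one must align generators of the split and non-split parts so that their images in $H$ coincide under the confluent rewriting system. This is precisely what forces the use of the Fox-derivative calculus on the split side and the hybrid representation on the output.
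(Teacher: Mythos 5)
Your overall architecture --- a split component obtained via Proposition~\ref{propT}, a non-split component built from cocycle representatives of the given basis of $H^2(H,V)$ through the rewriting-system machinery of Section~\ref{secComp}, and a gluing over $H$ --- is the same as in the paper's Theorem~\ref{thmHve}. The genuine gap is in your final step. You take the \emph{full} fibre product $P$ of the split piece with the stacked non-split extension and assert that ``by Theorem~\ref{thmhomogext} this subdirect product is isomorphic to $\hat H_{V,e}$''. Theorem~\ref{thmhomogext} does not say that: applied to $\hat H_{V,e}$ it only expresses $\hat H_{V,e}$ as the subdirect product of \emph{its own} split part $S$ with extensions $E_{\gamma_1},\ldots,E_{\gamma_n}$ for \emph{some} linearly independent classes. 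To identify this with your $P$ you would still have to prove (a) that every class of $H^2(H,V)$ actually occurs, i.e.\ $n=\dim H^2(H,V)$; this rests on the fact that a non-split extension of $H$ by the simple module $V$ is generated by arbitrary lifts of the $e$ generators of $H$ --- exactly the surjectivity argument the paper makes for the maps $\varrho_i$, and absent from your proposal --- and (b) that $S$ is all of $\Psi_{V,e}(F)$ rather than a proper quotient of it; Proposition~\ref{propT} gives only that $S$ is a quotient of $\Psi_{V,e}(F)$, so you would in addition need that $\Psi_{V,e}(F)$ is itself split, which is not formal (a subgroup of a split extension that surjects onto $H$ need not split; it is true here, but only via a Gasch\"utz-type argument you do not give). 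Equivalently, $P\cong\hat H_{V,e}$ exactly when $P$ is $e$-generated; without (a) and (b) your construction could a priori return a strictly larger group. (Your aside that the multiplicities in the two summands are determined by $e$ and $\dim H^2(H,V)$ is also inaccurate for the split part, whose kernel depends on the Gasch\"utz data of Remark~\ref{remAB}.)

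The paper's proof is arranged precisely to avoid these obligations: instead of the full fibre product it takes the subgroup $\rho(F)$ generated by the images of the $e$ free generators. That subgroup is $e$-generated by construction and, because each $\varrho_i$ is surjective (non-splitness of $E_{\gamma_i}$ plus simplicity of $V$), it is an extension of $H$ with $V$-homogeneous kernel, hence a quotient of $\hat H_{V,e}$; Theorem~\ref{thmhomogext}, Remark~\ref{remProof} and Proposition~\ref{propT} then give the surjection in the other direction, so $\rho(F)\cong\hat H_{V,e}$. This formulation also supplies the epimorphism $F\to\hat H_{V,e}$ that the lifting algorithm of Section~\ref{secLiftEpi} actually needs, which your fibre product does not come with unless you again prove that the designated generators generate it. A smaller point: for the split piece the algorithmic content is not Theorem~\ref{thmPhi} (the wreath construction involves $|H|$ copies of $\Z_p$ and is declared impractical), but the synthetic construction of Section~\ref{secCT}, namely exhibiting a cyclic generator of $V^r\cong R_{H,p}/V(R_{H,p})$ via a splitting field and MeatAxe methods; your proposal glosses over this step.
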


Assuming a confluent rewriting system for $H$, we describe a construction algorithm for $H^2(H,V)$ in  Section~\ref{secComp}; this allows us to apply Theorem \ref{thmHve1} to construct $\hat H_{V,e}$.

Importantly, our results can be used for a non-solvable quotient algorithm. We discuss the details of the following theorem in Section \ref{secLiftEpi}. 
  
\begin{theorem}
 Let $\varphi\colon G\to H$ be an epimorphism from a finitely presented group onto a finite, finitely presented, $e$-generated group. Given a simple $\Z_p H$-module $V$ and a confluent rewriting system for $H$, there is an algorithm to construct an epimorphism $\alpha\colon G\to K$ where $\ker\alpha\leq \ker\varphi$  and $K$ is the largest $e$-generated quotient of $G$ that maps onto $H$ with $V$-homogeneous kernel.
\end{theorem}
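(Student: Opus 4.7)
The plan is to combine the $(V,e)$-cover construction with a classical lifting argument. First, apply the cohomology algorithm of Section~\ref{secComp} to the given confluent rewriting system for $H$ to compute a basis of $H^2(H,V)$; then invoke Theorem~\ref{thmHve1} to build $\hat H_{V,e}$ together with its projection $\pi\colon\hat H_{V,e}\twoheadrightarrow H$ and its $V$-homogeneous kernel $M=\ker\pi$, all stored in the hybrid format.

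Fix a finite presentation $G=\langle y_1,\ldots,y_m\mid s_1,\ldots,s_n\rangle$ with $m\geq e$. By the lemma of Gasch\"utz (applicable because $M$ is abelian and $\hat H_{V,e}$ is $e$-generated, while $m\geq e$), the lifts $\hat y_i\in\pi^{-1}(\varphi(y_i))$ can be chosen so that they together generate $\hat H_{V,e}$; this defines a surjection $\tilde\varphi\colon F_m\twoheadrightarrow\hat H_{V,e}$ from the free group of rank~$m$. Evaluate $\tilde\varphi$ on each relator $s_j$ using the hybrid arithmetic; since $\varphi(s_j)=1$, the images lie in $M$. Let $N\leq M$ be the $\Z_pH$-submodule generated by these images (computable by linear algebra over the finite field $\mathrm{End}_{\Z_pH}(V)$). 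Setting $K=\hat H_{V,e}/N$, the map $\tilde\varphi$ descends to an epimorphism $\alpha\colon G\twoheadrightarrow K$ with $\ker\alpha\leq\ker\varphi$, and $K$ is $e$-generated as a quotient of $\hat H_{V,e}$.

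For maximality, let $\rho\colon G\twoheadrightarrow K'$ be any $e$-generated quotient that factors $\varphi$ and has $V$-homogeneous kernel. By Theorem~\ref{thmHVE1} there is an epimorphism $\sigma\colon\hat H_{V,e}\twoheadrightarrow K'$ over $H$, with kernel $N'\leq M$. A further application of Gasch\"utz to $\sigma$ lets one adjust the lifts $\hat y_i$ within their $M$-cosets so that $\sigma(\hat y_i)=\rho(y_i)$; with these lifts, $\sigma\circ\tilde\varphi=\rho\circ(F_m\twoheadrightarrow G)$, whence $\tilde\varphi(s_j)\in\ker\sigma=N'$ for all $j$ and so $N\subseteq N'$, exhibiting $K'$ as a quotient of $K$. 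That the resulting $K$ is independent of the particular surjective lift used by the algorithm can be read off from the Fox-derivative description of $\hat H_{V,e}$ in Theorem~\ref{thmPhi}: modifying $\hat y_i$ by $c_i\in M$ changes $\tilde\varphi(s_j)$ by $\sum_i (\partial s_j/\partial y_i)^\varphi\cdot c_i$, and these correction terms generate the same $\Z_pH$-submodule.

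The main practical obstacle is the arithmetic inside $\hat H_{V,e}$ needed to evaluate $\tilde\varphi(s_j)$: relators can be long words in $m$ variables, and naive multiplication in a large extension of a non-solvable group would be prohibitive. The hybrid format is designed precisely to make these evaluations tractable; once the relator images are computed, forming $N$ and the quotient $K=\hat H_{V,e}/N$ amounts to routine module computations.
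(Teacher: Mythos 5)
The overall route you take (construct $\hat H_{V,e}$, evaluate the relators of $G$ in it, and factor by the $\Z_pH$-submodule generated by their images) is the paper's strategy, but there is a genuine gap in how you map $G$ into the cover. You re-lift the generators $\varphi(y_i)$ into an abstractly given $\hat H_{V,e}$ via Gasch\"utz's lemma and then claim, via Fox derivatives, that the submodule $N$ is independent of the chosen lifts; that independence claim is false, and with a perfectly legitimate (generating) choice of lifts your algorithm returns a non-maximal quotient. Concretely, take $H=C_p=\langle g\rangle$, $V=\pmb{1}$, $e=1$, so $\hat H_{V,1}\cong C_{p^2}=\langle t\rangle$ with kernel $M=\langle t^p\rangle$, and take $G=\langle y_1,y_2\mid y_2\rangle\cong\Z$ with $\varphi(y_1)=g$, $\varphi(y_2)=1$. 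The lifts $\hat y_1=t$, $\hat y_2=t^p$ generate $C_{p^2}$, yet the relator $s_1=y_2$ evaluates to $t^p$, so $N=M$ and $K\cong C_p$, whereas the largest $1$-generated quotient with the required properties is $C_{p^2}$ (obtained from the lift $\hat y_2=1$). Here changing $\hat y_2$ by $c_2=t^p$ changes $\tilde\varphi(s_1)$ by the correction term $c_2$ itself (the relevant Fox derivative is $1$), and the submodules generated before and after are $0$ and $M$: the correction terms do \emph{not} generate the same submodule. This also breaks your maximality argument: the lifts you adjust to be compatible with a competitor $K'$ via $\sigma$ are in general not the lifts the algorithm committed to, and nothing transports the inclusion $N\subseteq N'$ back to the algorithm's $N$.

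The paper avoids any such choice: there $F$ is the free group underlying the given presentation of $G$ (so of rank $e$), $\psi\colon F\to H$ is the lift of $\varphi$, and the construction of Theorem~\ref{thmHve} already delivers a canonical epimorphism $\rho\colon F\to\hat H_{V,e}$ with $\pi\circ\rho=\psi$ and $\ker\rho=\ker(\eta\circ\Psi_p)$. One evaluates the relators of $G$ under $\rho$ and factors $\hat H_{V,e}$ by the normal closure (equivalently, the $\Z_pH$-submodule) of their images. Maximality is then automatic and choice-free: any $e$-generated quotient $K'$ of $G$ that maps onto $H$ with $V$-homogeneous kernel compatibly with $\varphi$ receives the epimorphism $F\to G\to K'$, whose kernel contains the relators and also contains $\ker\rho$ (since $M=\ker\psi$ maps onto the $V$-homogeneous kernel of $K'\to H$, its intersection with $\ker(F\to K')$ contains the preimage of $V(\calmhep)$); hence $K'$ is a quotient of $K$. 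Note also that by working with a presentation of $G$ on $m>e$ generators while insisting on maximality among $e$-generated quotients, you would first have to prove that a largest such quotient exists at all (the class of $e$-generated quotients of $G$ is not obviously closed under subdirect products); the paper ties $e$ to the rank of the presentation of $G$ precisely so that this issue does not arise.
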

  
Our last, and practically most relevant, result is a workable implementation
of our algorithms for the  computer algebra system {\sf GAP}~\cite{GAP}; we
discuss this in Section \ref{secPI}. Our code is available under \url{https://github.com/hulpke/hybrid}, and we aim to make it available as part of a standard GAP distribution. What makes our implementation
effective is a hybrid computer representation of the non-solvable extensions
of $H$ that combines confluent rewriting systems (for the non-solvable
factor) and polycyclic presentations (for the solvable normal subgroups); we
give details in Section~\ref{sechybgroup}.   We discuss some cost estimates
of our algorithm in Section \ref{secCost}.
Section~\ref{secEx} illustrates the scope of the algorithm in some examples. For instance, in Example~\ref{exHeineken} we have been able to compute, in a few minutes, an epimorphism from the infinite Heineken group $\mathcal{H}$ onto  $2^4.2^4.(2\times 2).2^4.2^4.2.(2\times 2^4).A_5$. This quotient had been constructed in a permutation representation of degree
138240 in~\cite{sinananholt} (with later work of Holt reducing to
permutation degree 15360), but our method works generically and avoids large
degree permutation representations.
 
\subsection{Comparison with other quotient algorithms}
We show in Remark~\ref{remPGen} that for groups $H$  of $p$-power order
our cover $\hat H_{V,e}$ is a generalisation of the $p$-covering group $H^\ast$,
and that our algorithm therefore generalises the  $p$-quotient
algorithm~\cite{pquotient}. For the case of a solvable $H$, several versions
of quotient algorithms have been proposed, for example
in~\cite{leedham84,ples87,niem94}.

The method of~\cite{leedham84,niem94}
constructs the maximal possible extension with a
module in a single step. When starting with an epimorphism $G\to H$ from a
free group $G$,  it will in fact construct the maximal cover $\hat H_{p,e}$.
This approach risks that in the process of forming this module (from
relations using vector enumeration) it will encounter a regular module of
$H$ (which often is infeasibly large) before reducing it back by further
relators. Our approach instead deliberately works with multiple covers, for
each of which its kernel is guaranteed to be much smaller than the regular
module.

While sharing many ideas with~\cite{ples87}, our approach differs in the following ways: first, we construct a universal cover and find the maximal possible lift of a given epimorphism $G\to H$ via a quotient of this cover; in \cite{ples87}, lifts are constructed in steps, each time extending by one copy of the module. Second, our construction of the cover reduces the extensions of $H$ that have to be determined using cohomology to a basis of the corresponding cohomology group, whereas the construction in~\cite{ples87} works with cosets of a subgroup of $H^2(H,M)$.

\subsection{Notation}
We denote by $e$  a positive integer and by $p$ a prime. We write $\Z_p$ for the integers modulo $p$. A group $G$ is an extension of $Q$ with $N$ if $G$ has a normal subgroup $M\cong N$ with $G/M\cong Q$; we usually identify $M=N$ and $G/N=Q$. A subgroup $U\leq A\times B$ of a direct product  is a subdirect product of
$A$ and $B$ if $U$ has surjective projections onto both $A$ and $B$.
In this case, \cite[Lemma 1.1]{thev97}
shows that for
$U_1=U\cap A$ and $U_2=U\cap B$ there is an isomorphism $\tau\colon A/U_1\to
B/U_2$, and $U$ is the preimage of $\{(aU_1,\tau(aU_1): a\in A\}$  under the natural projection from 
$A\times B$ to $A/U_1\times B/U_2$.

Throughout the paper, we use the following notation. We fix a prime $p$, a finitely presented group $G$, and an epimorphism $\varphi\colon G\to H$ onto a finite group $H$. Let $F$ be the free
group underlying the presentation of $G$ and denote its rank by $e$. Since $G$ is a quotient of
$F$ by a relation subgroup $R\unlhd F$, the epimorphism $\varphi$ lifts to a homomorphism
$\psi\colon F\to H$. Its kernel $M=\ker\psi$ will
map onto the kernel of any extension of $H$ that is a quotient for $G$. The situation is summarised by the following commutative diagram (whose first row is a short exact sequence).
\begin{equation*}
\begin{tikzcd}
  1 \arrow{r} & M \arrow{r} & F  \arrow{r}{\psi}\arrow{d} & H \arrow[r] & 1\\
          &          & G \arrow{ur}[swap]{\varphi} 
\end{tikzcd}
\end{equation*}


\section{Definition of covers and the regular module}\label{secDefR}

\noindent  In this section
we define the covers $\hat H_{p,e}$ and $\hat H_{V,e}$ of $H$, and  recall some  results for the $p$-modular regular module $\Z_p H$. In later sections we investigate these covers and their construction in detail.

\subsection{The $p$-cover of rank $e$}
We start with a discussion of the so-called $p$-representation module of~$H$. We write
\[
M_p=[M,M]M^{[p]}
\]
for the smallest normal subgroup of $M$ whose corresponding quotient group is an elementary abelian $p$-group; here $M'=[M,M]$ is the derived subgroup of $M$ and $M^{[p]}$ is the subgroup generated by all $p$-th powers. The quotient $M/M_p$ is an $H$-module where $g\in H$ acts via
conjugation by any preimage under $\psi$; this action is well-defined since $M$
acts trivially on $M/M_p$ by conjugation. 
The Nielsen-Schreier Theorem
\cite[(6.1.1)]{rob82} shows that $M$ is free  of rank $s=1+(e-1)|H|$,
hence  $M/M_p$ is elementary abelian of rank $s$. Since  $M_p$ is characteristic in $M$, hence normal in $F$, one can form $F/M_p$. We show in Theorem~\ref{thmCover} that the isomorphism type of $F/M_p$ depends only on $H$, $p$ and $e$, but not on  $\psi$. In view of Theorem \ref{thmHPE1} (proved with Theorem \ref{thmCover}), this justifies the following definition:

\begin{definition}
\label{defcover}
We call \[\calmhep=M/M_p\quad\text{and}\quad \hat H_{p,e}=F/M_p\] the $p$-representation module of $H$ and the $p$-cover of $H$ of rank $e$, respectively.
\end{definition}
 
The structure of $\calmhep$ has been described by Gasch\"utz~\cite{gasch54},
see also the book of Gruenberg~\cite{gruen76} and papers
\cite{cossey80,griess}. Note that $\hat H_{p,e}$ is an extension of $H$ with
$\calmhep$; we present an explicit construction of $\hat H_{p,e}$ in
Section~\ref{seccoverconst}. However, the rank $s$ of the module $\calmhep$
is often
too large for practical calculations. To reduce the size of the cover, we
therefore restrict to the case of semisimple homogeneous modules, that is,
modules which are the direct sum of isomorphic copies of a simple module.
Doing so does not limit the scope of our techniques, because  any other
extension with a module can be considered as an iterated extension with
semisimple homogeneous modules.

\subsection{The $(V,e)$-cover.} 

\begin{definition}\label{defHomCompNew} 
  Let $V$ be a simple $\Z_p H$-module. For a  $\Z_p H$-module $A$ let $V(A)$ be the smallest
submodule of $A$ such that $A/V(A)$ is $V$-homogeneous. The $(V,e)$-cover of $H$ is 
\[\hat H_{V,e}=\hat H_{p,e}/V(\calmhep);\]
by construction, it is the largest $e$-generated group that maps onto $H$ with 
$V$-homogeneous kernel.
\end{definition}

Recall that the radical $\rad(A)$ of an $H$-module $A$ is the intersection of all maximal submodules, and
$\rad(A)=0$ if no such submodules exist.
The following lemma seems well-known, see e.g.\ \cite[Introduction, \S 5]{curtisreiner}, but we could not find a
reference that includes all statements concisely in one place; therefore we include a short proof
in Appendix \ref{app} for completeness. It follows that  $\rad(A)\le V(A)$,
and therefore the structure of $A/V(A)$ is determined by the radical factor
of $A$.

\begin{lemma}\label{lemRad}
  Let $A$ and $B$ be   $H$-modules; let $C\leq A$ be a submodule. 
  \begin{ithm}
  \item We have $\rad(C)\leq \rad(A)$ and $\rad(A\oplus B)=\rad(A)\oplus\rad(B)$.
  \item If $\sigma\colon A\to B$ is an $H$-module homomorphism, then $\sigma(\rad(A))\leq \rad(B)$.
  \item We have $\rad(A/C)=(\rad(A)+C)/C$, and $A/C$ is semisimple if and only if $\rad(A)\leq C$.
  \end{ithm}
\end{lemma}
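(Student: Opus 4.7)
The plan is to first extract two foundational facts about finite $\Z_pH$-modules and then reduce each of parts (a), (b), (c) to routine manipulations with maximal submodules. The two facts are: (F1) a finite module $A$ is semisimple if and only if $\rad(A)=0$; and (F2) $A/\rad(A)$ is semisimple, and for any submodule $C\leq A$ the quotient $A/C$ is semisimple if and only if $\rad(A)\leq C$. Both follow from the finite length of $A$: if $A$ is semisimple, omitting one simple summand yields a maximal submodule, and the intersection over all summands is trivial; conversely, if $\rad(A)=0$ then $A$ embeds into a finite direct sum of simple quotients $A/M$, so submodules of semisimple modules are semisimple. Establishing these at the outset avoids circularity later.

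For part (a), I would pick a maximal submodule $M$ of $A$ and observe that $C/(M\cap C)$ embeds into the simple module $A/M$, so either $C\leq M$ or $M\cap C$ is maximal in $C$; either way $\rad(C)\leq M$. Intersecting over all maximal $M$ gives $\rad(C)\leq\rad(A)$. Applied to the summands of $A\oplus B$, this yields $\rad(A)\oplus\rad(B)\leq\rad(A\oplus B)$. The reverse inclusion is immediate from (F2): the quotient $(A\oplus B)/(\rad(A)\oplus\rad(B))\cong A/\rad(A)\oplus B/\rad(B)$ is a direct sum of semisimple modules, hence semisimple, so $\rad(A\oplus B)\leq\rad(A)\oplus\rad(B)$.

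Part (b) runs on the same principle: for a maximal submodule $N\leq B$ and an $H$-homomorphism $\sigma\colon A\to B$, the preimage $\sigma^{-1}(N)$ is either $A$ (when $\sigma(A)\leq N$) or maximal in $A$ (since $A/\sigma^{-1}(N)$ injects into the simple module $B/N$). In both cases $\rad(A)\leq\sigma^{-1}(N)$, so $\sigma(\rad(A))\leq N$, and intersecting over all maximal $N$ gives $\sigma(\rad(A))\leq\rad(B)$.

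For part (c), the correspondence theorem identifies the maximal submodules of $A/C$ as precisely those $M/C$ with $M$ maximal in $A$ and $C\leq M$, whence $\rad(A/C)$ is the image in $A/C$ of $\bigcap\{M : M\text{ max in }A,\, C\leq M\}$. One containment is immediate, since $\rad(A)+C$ lies in every such $M$, giving $(\rad(A)+C)/C\leq\rad(A/C)$. For the other, $A/(\rad(A)+C)$ is a quotient of the semisimple module $A/\rad(A)$ and therefore semisimple; applying (F2) to $A/C$ forces $\rad(A/C)\leq(\rad(A)+C)/C$. The final equivalence ``$A/C$ is semisimple iff $\rad(A)\leq C$'' then falls out as the case $\rad(A/C)=0$. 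No step is technically deep; the main obstacle is merely sequencing the preliminary characterisations (F1)--(F2) before the three labelled parts so that the semisimplicity arguments in (a) and (c) are available without circularity.
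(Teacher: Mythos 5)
Your proof is correct and rests on the same elementary toolkit as the paper's (maximal submodules, embedding $C/(C\cap D)$ into a simple quotient, the correspondence of maximal submodules under quotient maps), but it is organised differently enough to be worth comparing. The paper argues each part bare-handed: the inclusion $\rad(A\oplus B)\leq\rad(A)\oplus\rad(B)$ comes from exhibiting $W\oplus B$ and $A\oplus V$ as maximal submodules of $A\oplus B$; part b) factors $\sigma$ through its image $D=\sigma(A)$ and pulls back maximal submodules of $D$ (you pull back maximal submodules of $B$ directly, which is equivalent); and in part c) the inclusion $(\rad(A)+C)/C\leq\rad(A/C)$ is obtained by applying b) to the projection $A\to A/C$, while the reverse inclusion uses semisimplicity of $A/(\rad(A)+C)$. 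You instead front-load the criteria (F1)--(F2) (``semisimple iff zero radical'', ``$A/C$ semisimple iff $\rad(A)\leq C$'') and then deduce the reverse inclusion in a) and the inclusion $\rad(A/C)\leq(\rad(A)+C)/C$ from them, getting the first inclusion in c) from the lattice correspondence rather than from b). Both routes work; the paper's keeps each step local and never needs (F2) as a standalone statement, while yours isolates a reusable criterion -- but that is also where the burden sits: in a final write-up you must actually prove (F2) in the preliminaries (its forward direction needs the correspondence-of-maximal-submodules argument, and both directions use that submodules and quotients of semisimple modules are semisimple, which you only assert), and prove it independently of part c), since you invoke it in a) and c). Your proposed sequencing does make this non-circular, and the paper relies on the same standard semisimplicity facts, so nothing deeper is required.
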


A practically
feasible construction of $\hat H_{V,e}$ is discussed in Section~\ref{secHve}.  Here we conclude with a comment on the $p$-cover in the $p$-quotient algorithm.

\begin{remark}\label{remPGen} 
If $H$ is a finite $p$-group, then it is natural to compare $\hat H_{p,e}$
with the $p$-cover of $H$ as defined in the $p$-quotient algorithm, see
\cite[Section 9.4]{handbook} for proofs and background information.  If $H$
has rank $e$ (that is, every minimal generating set of $H$ has size $e$),
then its $p$-cover $H^\ast$ is an $e$-generated extension of $H$ with a central elementary abelian $p$-group $N$, and every other such extension of $H$ is a quotient of $H^\ast$, see \cite[Theorem~9.18]{handbook}. The group $H^\ast$ is unique up to
isomorphism, and if $H=F/M$ with $F$ a free group of rank  $e$, then $H^\ast\cong F/[F,M]M^{[p]}$. In particular, $H^\ast$ is a quotient of
$\hat H_{p,e}$. Since $N$ is the direct sum of copies of the 1-dimensional
trivial $\Z_p H$-module $\pmb{1}$, it follows that  $H^\ast\cong \hat
H_{\pmb{1},\rank(H)}$ is a special case of our $p$-cover $\hat H_{V,e}$
\end{remark}


\subsection{The structure of the regular module}\label{secRegMod}
We recall the following results for the regular module $\mathbb{F}H$ where $H$ is a finite group and $\mathbb{F}$ is a finite field.  Following \cite[Definition~1.5.8]{luxpahlings}, we call an extension
field $\underline{\mathbb{F}}\geq \mathbb{F}$ a \emph{splitting field} for an $\mathbb{F}$-algebra $A$,
if every simple $\underline{\mathbb{F}}A$-module is absolutely simple. It is proved in \cite[Lemma~1.5.9]{luxpahlings} that if $\dim_\mathbb{F} A<\infty$, then there exists a splitting field $\underline{\mathbb{F}}$ such that the extension $\underline{\mathbb{F}}>\mathbb{F}$ has finite degree. This allows us to state the following lemma and theorem, which are consequences of standard results of modular representation theory; due to their importance for this work, proofs of both results are contained in Appendix~\ref{app}.

\begin{lemma}
\label{lemKH}Let $\mathbb{F}$ be a finite field and let $\underline{\mathbb{F}}$ be a finite degree splitting field for $\mathbb{F}H$. For an $\mathbb{F}H$-module $V$ let $\underline{\mathbb{F}}V=\underline{\mathbb{F}}\otimes_\mathbb{F} V$ be the $\underline{\mathbb{F}}H$-module arising from $V$  by extending  scalars.
\begin{ithm}
\item If $V$ is a simple $\mathbb{F}H$-module, then $\underline{\mathbb{F}}V$ is a direct sum
of non-isomorphic simple $\underline{\mathbb{F}}H$-modules.
\item We have  $\underline{\mathbb{F}}\rad(\mathbb{F}H)=\rad(\underline{\mathbb{F}}H)$.
\end{ithm}
\end{lemma}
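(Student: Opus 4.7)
The plan is to use standard modular representation theory, exploiting that any finite extension of finite fields is separable (in fact cyclic Galois).

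For part (a), I would argue as follows. By Schur's lemma, $D=\mathrm{End}_{\mathbb{F}H}(V)$ is a finite division ring, so by Wedderburn's little theorem it is a finite field $\mathbb{E}\supseteq\mathbb{F}$. A key preparatory step is to verify that $\mathbb{E}$ embeds into $\underline{\mathbb{F}}$: were this to fail, some simple constituent of $\underline{\mathbb{F}}V$ would retain a non-trivial piece of $\mathbb{E}$ in its endomorphism algebra, contradicting that $\underline{\mathbb{F}}$ is a splitting field. Setting $n=[\mathbb{E}:\mathbb{F}]$ and fixing such an embedding, separability of $\mathbb{E}/\mathbb{F}$ yields the $\underline{\mathbb{F}}$-algebra isomorphism $\underline{\mathbb{F}}\otimes_\mathbb{F}\mathbb{E}\cong\underline{\mathbb{F}}^{n}$, indexed by the $n$ embeddings $\sigma\colon\mathbb{E}\hookrightarrow\underline{\mathbb{F}}$. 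Viewing $V$ as an absolutely simple $\mathbb{E}H$-module and using $\underline{\mathbb{F}}\otimes_\mathbb{F} V\cong(\underline{\mathbb{F}}\otimes_\mathbb{F}\mathbb{E})\otimes_\mathbb{E} V$ yields
\[
\underline{\mathbb{F}}V \;\cong\; \bigoplus_{\sigma\in\mathrm{Gal}(\mathbb{E}/\mathbb{F})} V^\sigma,
\]
where each Galois twist $V^\sigma$ has endomorphism algebra $\underline{\mathbb{F}}$ and is therefore simple. Pairwise non-isomorphism follows from a standard Brauer-character argument: the character of $V^\sigma$ is the $\sigma$-twist of that of $V$, and since the character values of $V$ generate $\mathbb{E}$ over $\mathbb{F}$, distinct elements of $\mathrm{Gal}(\mathbb{E}/\mathbb{F})$ give distinct characters.

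For part (b), I would prove both inclusions of the equality $\underline{\mathbb{F}}\rad(\mathbb{F}H)=\rad(\underline{\mathbb{F}}H)$ separately. The forward inclusion uses that the Jacobson radical of a finite-dimensional algebra is a nilpotent two-sided ideal: if $\rad(\mathbb{F}H)^k=0$, then $\bigl(\underline{\mathbb{F}}\rad(\mathbb{F}H)\bigr)^k=0$, and since $\underline{\mathbb{F}}\rad(\mathbb{F}H)$ is evidently a two-sided ideal of $\underline{\mathbb{F}}H$, it is contained in $\rad(\underline{\mathbb{F}}H)$. For the reverse inclusion, it suffices to show that $\underline{\mathbb{F}}H/\underline{\mathbb{F}}\rad(\mathbb{F}H)$ is semisimple. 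By Wedderburn--Artin the semisimple algebra $\mathbb{F}H/\rad(\mathbb{F}H)$ is a finite product of matrix rings $M_{n_i}(\mathbb{E}_i)$ over finite field extensions $\mathbb{E}_i\supseteq\mathbb{F}$, so extending scalars gives
\[
\underline{\mathbb{F}}H/\underline{\mathbb{F}}\rad(\mathbb{F}H) \;\cong\; \prod_i M_{n_i}\!\bigl(\underline{\mathbb{F}}\otimes_\mathbb{F}\mathbb{E}_i\bigr).
\]
Separability of each $\mathbb{E}_i/\mathbb{F}$ implies $\underline{\mathbb{F}}\otimes_\mathbb{F}\mathbb{E}_i$ is a product of finite field extensions of $\underline{\mathbb{F}}$, hence semisimple, which forces $\rad(\underline{\mathbb{F}}H)\subseteq\underline{\mathbb{F}}\rad(\mathbb{F}H)$.

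The main technical obstacle sits in part (a): carefully tracking how the algebra decomposition $\underline{\mathbb{F}}\otimes_\mathbb{F}\mathbb{E}\cong\underline{\mathbb{F}}^n$ interacts with the $H$-action on $V$, and certifying pairwise non-isomorphism of the resulting Galois conjugates. Part (b), by contrast, is routine once one acknowledges that separability of finite field extensions is automatic; the only substantive input is the Artin--Wedderburn structure together with nilpotency of the Jacobson radical in the finite-dimensional setting.
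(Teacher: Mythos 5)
Your proposal is correct in substance but takes a genuinely different route from the paper's. For a), the paper simply notes that $\underline{\mathbb{F}}\geq\mathbb{F}$ is a Galois extension of finite fields and cites \cite[Theorem~1.8.4]{luxpahlings}, whereas you sketch an actual proof: identify $\mathbb{E}=\mathrm{End}_{\mathbb{F}H}(V)$ as a finite field via Wedderburn, use separability to split $\underline{\mathbb{F}}\otimes_\mathbb{F}\mathbb{E}\cong\underline{\mathbb{F}}^{\,n}$, and read off the decomposition of $\underline{\mathbb{F}}V$ into Galois twists. For b), the paper argues module-theoretically, deducing both containments from a) (via maximal submodules of $\mathbb{F}H$ and semisimplicity of $\underline{\mathbb{F}}(\mathbb{F}H/\rad(\mathbb{F}H))$), while you argue ring-theoretically: nilpotency of $\rad(\mathbb{F}H)$ gives $\underline{\mathbb{F}}\rad(\mathbb{F}H)\leq\rad(\underline{\mathbb{F}}H)$, and Artin--Wedderburn plus separability gives the reverse inclusion. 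Your b) is thus independent of a), and the nilpotency argument is a clean way to obtain that containment; the paper's version buys brevity by outsourcing a) and staying at the module level used elsewhere in the paper.

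One step deserves a flag. In a) you settle pairwise non-isomorphism by asserting that the character values of $V$ (viewed as an $\mathbb{E}H$-module) generate $\mathbb{E}$ over $\mathbb{F}$. This is true over finite fields, but it is itself a nontrivial standard theorem (essentially triviality of Schur indices over finite fields together with Galois descent), of roughly the same depth as the statement being proved, so as written this step is a citation rather than an argument. Within your own setup there is a cheaper finish: by base change of Hom, $\mathrm{End}_{\underline{\mathbb{F}}H}(\underline{\mathbb{F}}V)\cong\underline{\mathbb{F}}\otimes_\mathbb{F}\mathbb{E}$, which is commutative of $\underline{\mathbb{F}}$-dimension $n$; since the $n$ summands $V^\sigma$ are absolutely simple, a repeated isomorphism type would force a copy of $M_2(\underline{\mathbb{F}})$ inside this commutative algebra. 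This gives multiplicity one, hence pairwise non-isomorphism, with no character theory, using only the Hom-base-change fact you already rely on when embedding $\mathbb{E}$ into $\underline{\mathbb{F}}$.
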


\begin{theorem}\label{structureKH}
Let $H$ be a finite group.
If $\mathbb{F}$ is a field in finite characteristic,
then the regular module can be decomposed as
\begin{eqnarray}\label{eqDecPj}\mathbb{F}H=D_1^{r_1}\oplus\ldots\oplus D_t^{r_t},
\end{eqnarray}
where each  $D_i$ is a module that is  indecomposable and  projective (as a direct summand of the free module). The factors $D_i/\rad(D_i)$ are simple,  mutually non-isomorphic, and $t$ is the number of isomorphism types of simple $\mathbb{F}H$-modules. The isomorphism type of each $D_j$ is determined uniquely by the isomorphism type of $D_j/\rad(D_j)$, and we have 
\[\mathbb{F}H/\rad(\mathbb{F}H)=\bigoplus\nolimits_{i=1}^t (D_i/\rad(D_i))^{r_i}.\]
Each multiplicity $r_i$ is the dimension of an
absolutely simple constituent of $D_i/\rad(D_i)$; if  $\mathbb{F}$ is of
sufficiently large degree over the prime field 
or if $\F$ is algebraically closed, then  $r_i=\dim (D_i/\rad(D_i))$. 
\end{theorem}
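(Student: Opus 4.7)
The plan is to assemble the statement from standard pieces of modular representation theory, in the following order. First I would invoke the Krull-Schmidt theorem (applicable since $\mathbb{F}H$ is finite-dimensional and hence Artinian) to decompose $\mathbb{F}H$ as a direct sum of indecomposable submodules, which is unique up to order and isomorphism; grouping isomorphic summands yields the shape $D_1^{r_1}\oplus\ldots\oplus D_t^{r_t}$. Each $D_i$ is a direct summand of the free module $\mathbb{F}H$ and is therefore projective.

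Next I would establish the simplicity and distinctness of the heads $D_i/\rad(D_i)$. Since each $D_i$ is a finite-dimensional indecomposable module over the Artinian ring $\mathbb{F}H$, its endomorphism ring is local, and consequently $\rad(D_i)$ is the unique maximal submodule of $D_i$, so $D_i/\rad(D_i)$ is simple. To see that the heads of non-isomorphic $D_i$ are non-isomorphic, suppose $D_i/\rad(D_i)\cong D_j/\rad(D_j)$; projectivity of $D_i$ lifts the isomorphism to a map $D_i\to D_j$ which is surjective by Nakayama's lemma and therefore splits, forcing $D_i\cong D_j$. Conversely, every simple $\mathbb{F}H$-module $S$ is a homomorphic image of the regular module $\mathbb{F}H$ and hence of some $D_i$, and because $D_i/\rad(D_i)$ is the unique simple quotient of $D_i$ we get $S\cong D_i/\rad(D_i)$. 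This produces a bijection between the $D_i$ and the isomorphism types of simple $\mathbb{F}H$-modules, giving the value of $t$. Applying Lemma~\ref{lemRad} to the decomposition of $\mathbb{F}H$ yields $\rad(\mathbb{F}H)=\bigoplus_i\rad(D_i)^{r_i}$ and hence the claimed formula for $\mathbb{F}H/\rad(\mathbb{F}H)$.

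It remains to identify the multiplicities $r_i$. I would first handle the splitting field case: if $\mathbb{F}$ is a splitting field for $\mathbb{F}H$, then the semisimple algebra $\mathbb{F}H/\rad(\mathbb{F}H)$ is by Wedderburn a product of matrix algebras $M_{n_i}(\mathbb{F})$, so its decomposition as a left module over itself has each simple summand $D_i/\rad(D_i)$ appearing with multiplicity equal to its own $\mathbb{F}$-dimension; this gives $r_i=\dim(D_i/\rad(D_i))$. For a general finite field $\mathbb{F}$, I would pass to a finite-degree splitting field $\underline{\mathbb{F}}$ supplied by \cite[Lemma~1.5.9]{luxpahlings}. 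Scalar extension is additive, so the decomposition of $\mathbb{F}H$ yields a decomposition of $\underline{\mathbb{F}}H$ in which the multiplicity of any absolutely simple $\underline{\mathbb{F}}H$-constituent $S$ of $D_i/\rad(D_i)$ equals $r_i$, using Lemma~\ref{lemKH}(a) (which says the $\underline{\mathbb{F}}$-extension of a simple module is a multiplicity-free sum of absolutely simple modules) together with the fact that non-isomorphic simple $\mathbb{F}H$-modules produce disjoint sets of absolutely simple constituents. On the other hand, by the splitting field case applied over $\underline{\mathbb{F}}$, the same multiplicity equals $\dim_{\underline{\mathbb{F}}}(S)$, so $r_i=\dim_{\underline{\mathbb{F}}}(S)$ is the dimension of any absolutely simple constituent of $D_i/\rad(D_i)$.

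The bookkeeping in the last paragraph is where I expect the main obstacle to lie: one has to be careful that absolutely simple constituents coming from different $D_i/\rad(D_i)$ cannot coincide, so that the multiplicity of $S$ in $\underline{\mathbb{F}}H$ really picks out the single factor $r_i$ rather than an inflated sum. This should fall out cleanly from Lemma~\ref{lemKH}(a) applied to each simple $\mathbb{F}H$-module together with the uniqueness of the head identification in the first part, but it needs to be spelled out before the equality $r_i=\dim_{\underline{\mathbb{F}}}(S)$ can be asserted.
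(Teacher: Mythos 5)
Your argument is correct and follows essentially the same route as the paper, which delegates the structural claims (Krull--Schmidt, simple heads of the projective indecomposables, the bijection with the simple modules, and the algebraically closed case) to citations from Lux--Pahlings and then, exactly as you do, obtains the multiplicities $r_i$ by extending scalars to a finite splitting field $\underline{\mathbb{F}}$ and combining Lemma~\ref{lemKH} with Wedderburn's theorem over $\underline{\mathbb{F}}$. Two minor points: the simplicity of $D_i/\rad(D_i)$ requires the projectivity of $D_i$ and not merely that $\mathrm{End}(D_i)$ is local (non-projective indecomposables can have non-simple head), and the disjointness you flag -- that non-isomorphic simple $\mathbb{F}H$-modules share no absolutely simple constituents -- indeed does not follow from Lemma~\ref{lemKH}a) alone, but it is immediate from $\mathrm{Hom}_{\underline{\mathbb{F}}H}(\underline{\mathbb{F}}V,\underline{\mathbb{F}}W)\cong\underline{\mathbb{F}}\otimes_{\mathbb{F}}\mathrm{Hom}_{\mathbb{F}H}(V,W)=0$ for non-isomorphic simples $V,W$ (equivalently from the Galois-descent result \cite[Theorem~1.8.4]{luxpahlings} underlying that lemma), and the paper's own proof relies on the same fact implicitly when it equates $r_i$ with the multiplicity of $C_j$ in $\underline{\mathbb{F}}H$.
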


We are particularly interested in the regular module $\Z_p H$; we fix the following notation for the remainder of this paper.

\begin{definition}\label{defRhp}
We write $R_{H,p}$ for the $p$-modular regular $H$-module, that is, $R_{H,p}\cong \Z_p H\cong \Z_p^m$ as $H$-modules, where $|H|=m$. Applying \eqref{eqDecPj}, we decompose
\begin{eqnarray}\label{eqDecRhp}
R_{H,p}=D_1^{r_1}\oplus\ldots\oplus D_t^{r_t}.
\end{eqnarray}
Writing  $E_i=D_i/\rad(D_i)$, the set $\{E_1,\ldots,E_t\}$ forms a complete set of representatives of simple $\Z_p
H$-modules;  we assume  $E_1=\pmb{1}$ is the 1-dimensional trivial module. Each $r_i=\dim_{\Z_p} C_i$, where $C_i$ is an absolutely simple
constituent of $E_i$ over the algebraic closure of $\Z_p$.
\end{definition}


\section{Uniqueness of the cover and a construction}
\label{seccoveruniq}

\noindent Recall that $\psi\colon F\to H$ has kernel $M$ and that $\hat H_{p,e}=F/M_p$ is an extension of $H$ with the elementary abelian module $\calmhep=M/M_p$. The following lemma, due to Gasch\"utz \cite{gasch55}, shows that $\psi$ factors through any $e$-generated extension of $H$ with an elementary abelian $p$-group.

\begin{lemma}{\rm (\!\cite[Satz 1]{gasch55})}
\label{lemMAN}
Let $N\unlhd K$ be a finite normal subgroup of an $e$-generated group $K$. If $K/N$ is generated by $\{k_1N,\ldots, k_e N\}$, then there are $n_1,\ldots,n_e\in N$ with $K=\langle k_1n_1,\ldots,k_en_e\rangle$.
\end{lemma}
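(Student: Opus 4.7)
The plan is a counting and Möbius inversion argument. For an $e$-tuple $(k_1,\dots,k_e)\in K^e$ whose image generates $K/N$, I will study the count
\[A(k_1,\dots,k_e):=\#\{(n_1,\dots,n_e)\in N^e:\langle k_1n_1,\dots,k_en_e\rangle=K\}\]
and show that it depends on the cosets $k_iN$ only through the condition that they generate $K/N$. Once independence is established, the $e$-generation of $K$ provides some generating $e$-tuple $(g_1,\dots,g_e)$ of $K$, whose images generate $K/N$ and satisfy $A(g_1,\dots,g_e)\geq 1$ trivially (take $n_i=1$); independence then transfers $A(k_1,\dots,k_e)\geq 1$ to the given $k_i$, yielding the desired $n_1,\dots,n_e\in N$.

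The first step is to analyse, for each subgroup $L\leq K$, the auxiliary quantity
\[g(L):=\#\{(n_1,\dots,n_e)\in N^e:k_in_i\in L\text{ for all }i\}.\]
For each $i$, the set $\{n\in N:k_in\in L\}$ equals either a coset of $L\cap N$ in $N$ (when $k_iN\cap L\neq\emptyset$, i.e.\ when $k_i\in LN$) or the empty set (when $k_i\notin LN$); multiplying over $i$ yields $g(L)=|L\cap N|^e$ whenever every $k_i$ lies in $LN$, and $g(L)=0$ otherwise. Since $\{k_iN\}$ generates $K/N$, the first alternative occurs precisely when $LN=K$, in which case $g(L)$ depends only on $L$, not on the $k_i$.

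Next, I would convert $g$ into $A$ by Möbius inversion on the subgroup lattice of $K$. If $f(L)$ counts the tuples $(n_i)\in N^e$ with $\langle k_in_i\rangle=L$, then $g(L)=\sum_{L'\leq L}f(L')$, whence
\[A(k_1,\dots,k_e)\;=\;f(K)\;=\;\sum_{L\leq K}\mu(L,K)\,g(L)\;=\;\sum_{\substack{L\leq K\\LN=K}}\mu(L,K)\,|L\cap N|^e,\]
where $\mu$ denotes the Möbius function of the subgroup lattice of $K$. The right-hand side manifestly depends only on the pair $(K,N)$ and on $e$, so $A$ is independent of the chosen representatives $k_1,\dots,k_e$ as long as their images generate $K/N$.

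Combining these steps closes the argument: $e$-generation of $K$ gives a tuple $(g_1,\dots,g_e)$ with $A(g_1,\dots,g_e)\geq 1$, and independence forces $A(k_1,\dots,k_e)\geq 1$ for the prescribed $k_i$, producing elements $n_1,\dots,n_e\in N$ as required. The main delicate point is pinning down the product shape of $g(L)$ and verifying that the condition ``$k_i\in LN$ for every $i$'' collapses under the generation hypothesis to the single condition $LN=K$; once this is in place, the Möbius step is routine and the counting identity makes the independence transparent.
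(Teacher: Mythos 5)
The paper does not prove this lemma at all: it is quoted verbatim from Gasch\"utz \cite[Satz 1]{gasch55}, so there is no in-paper argument to compare against. Your counting argument is essentially the classical proof of Gasch\"utz's lemma, and the core steps are sound: the computation $g(L)=|L\cap N|^e$ when every $k_i\in LN$ (and $0$ otherwise), the reduction of the condition ``$k_i\in LN$ for all $i$'' to $LN=K$ using normality of $N$ and the generation hypothesis, and the transfer of $A\geq 1$ from a generating $e$-tuple of $K$ to the prescribed representatives are all correct. If $K$ is finite (which is the only case the paper actually uses, in the proof of Theorem \ref{thmCover}), the M\"obius inversion over the subgroup lattice is legitimate and your proof is complete.

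There is, however, a gap in the generality you claim: the lemma only assumes $N$ finite and $K$ $e$-generated, so $K$ may be infinite, and then ``M\"obius inversion on the subgroup lattice of $K$'' is not literally defined --- $\mu(L,K)$ requires the interval $[L,K]$ to be finite, and your unrestricted sum $\sum_{L\leq K}\mu(L,K)g(L)$ ranges over an infinite lattice. The fix uses observations you already have implicitly: every tuple $(n_1,\dots,n_e)\in N^e$ generates a subgroup $L$ with $LN\supseteq\langle k_1n_1,\dots,k_en_e,N\rangle=K$, so both $f$ and $g$ are supported on the set $\mathcal{S}=\{L\leq K: LN=K\}$; each $L\in\mathcal{S}$ has index $[K:L]=[N:N\cap L]\leq|N|$, so the interval $[L,K]$ is finite (an intermediate subgroup is a union of the finitely many cosets of $L$) and lies inside $\mathcal{S}$, making $\mu(L,K)$ well defined; and since a finitely generated group has only finitely many subgroups of index at most $|N|$ (M.~Hall's theorem), $\mathcal{S}$ is finite, so the inversion can be carried out inside the finite upward-closed poset $\mathcal{S}$ and yields exactly your formula $f(K)=\sum_{L\in\mathcal{S}}\mu(L,K)|L\cap N|^e$. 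With that paragraph added (or with the statement restricted to finite $K$), your proof is correct.
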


The next result  proves Theorem \ref{thmHPE1} and shows that the cover $\hat H_{p,e}$ is independent of the chosen projection $\psi\colon F\to H$; this theorem is largely a corollary to a result of Gasch\"utz \cite{gasch54}. Similar universal properties hold for covers of other quotient algorithms, cf.\ Remark \ref{remPGen} for the $p$-cover.

\begin{theorem}\label{thmCover}
The group $\hat H_{p,e}$ is an $e$-generated extension of $H$ with an elementary abelian $p$-group, and every other such extension of $H$ is a quotient of $\hat H_{p,e}$. The isomorphism type of $\hat H_{p,e}$ depends only on $H$, $p$, and $e$; the same holds for the $H$-module structure of $\calmhep$.
\end{theorem}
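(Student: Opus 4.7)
My plan is to deduce both the universal property and the uniqueness claim from Lemma~\ref{lemMAN}. First I would verify the basic structural claims by unwinding definitions: $\hat H_{p,e}=F/M_p$ is $e$-generated as a quotient of the rank-$e$ free group $F$, and the short exact sequence $1\to M/M_p\to F/M_p\to H\to 1$ exhibits it as an extension of $H$. Since $M$ is free by Nielsen-Schreier, the quotient $M/M_p=M/[M,M]M^{[p]}$ is the mod-$p$ abelianisation of a free group, hence elementary abelian of rank $s=1+(e-1)|H|$.

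For the universal property, let $K$ be any $e$-generated extension of $H$ with elementary abelian $p$-kernel $N$ and projection $\pi\colon K\to H$. Writing $F=\langle f_1,\ldots,f_e\rangle$ and $h_i=\psi(f_i)$, I would choose preimages $k_i\in K$ of $h_i$. Lemma~\ref{lemMAN} yields $n_i\in N$ such that $K=\langle k_1n_1,\ldots,k_en_e\rangle$, so the assignment $f_i\mapsto k_in_i$ defines an epimorphism $\alpha\colon F\to K$ with $\pi\circ\alpha=\psi$. In particular $\alpha(M)\leq N$, and since $N$ is abelian of exponent $p$, $\alpha([M,M])=1$ and $\alpha(M^{[p]})=1$, so $\alpha$ kills $M_p$. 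Hence $\alpha$ descends to an epimorphism $\hat H_{p,e}\to K$ compatible with the projections onto $H$, as required.

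For uniqueness, let $\psi'\colon F\to H$ be a second epimorphism with kernel $M'$, and form $F/M'_p$. Both $F/M_p$ and $F/M'_p$ are $e$-generated extensions of $H$ with elementary abelian $p$-kernel, so the universal property applied in both directions yields epimorphisms $F/M_p\to F/M'_p$ and $F/M'_p\to F/M_p$. Both groups have order $|H|\cdot p^s$ with the same $s=1+(e-1)|H|$ by the rank formula, so these epimorphisms must be isomorphisms. Because each is constructed via Lemma~\ref{lemMAN} so as to commute with the projections onto $H$, the resulting isomorphism carries $M/M_p$ onto $M'/M'_p$ and intertwines conjugation by any preimage of $h\in H$ on each side, which is exactly the assertion that it is an isomorphism of $H$-modules.

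The one delicate point I foresee is ensuring this $H$-equivariance when transferring the module structure between different realisations: one cannot simply take an arbitrary group isomorphism and hope it respects the $H$-action. The resolution is already built into Lemma~\ref{lemMAN}, which produces lifts of generators whose induced maps commute with the projections onto $H$; this compatibility propagates automatically to the induced map of elementary abelian kernels and upgrades the group isomorphism to an $H$-module isomorphism.
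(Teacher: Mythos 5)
Your argument is correct, and for the universal property and the uniqueness of the group it follows the paper's route exactly: lift a generating set of $H$ to the target extension via Lemma~\ref{lemMAN}, use freeness of $F$ to obtain $\alpha\colon F\to K$ with $\pi\circ\alpha=\psi$, observe that $\alpha(M_p)=1$ because the kernel is elementary abelian of exponent $p$, and conclude uniqueness from epimorphisms in both directions between finite groups (your variant via the order count $|H|\cdot p^{s}$, $s=1+(e-1)|H|$, from Nielsen--Schreier is an equivalent way to finish). The one genuine difference is the last claim, on the $H$-module structure of $\calmhep$: the paper does not prove this directly but cites Gasch\"utz's Satz~1 of \cite{gasch54}, whereas you derive it from the fact that the isomorphism $\beta\colon F/M_p\to F/M'_p$ produced by your construction satisfies $\bar\psi'\circ\beta=\bar\psi$, so for $m\in M/M_p$ and a preimage $x$ of $h\in H$ one has $\beta(x^{-1}mx)=\beta(x)^{-1}\beta(m)\beta(x)$ with $\beta(x)$ a legitimate preimage of $h$ on the other side; this is a valid and pleasantly self-contained replacement for the citation, and it makes explicit the point (which you rightly flag) that an arbitrary abstract isomorphism would not suffice --- the equivariance comes precisely from compatibility with the two projections onto $H$. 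What the paper's citation buys instead is an appeal to a stronger classical result that also underlies the finer structure theory used later (Theorem~\ref{thmMain}); what your argument buys is independence from that reference for the statement at hand.
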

\begin{proof}
The first claim on $\hat H_{p,e}$ follows by construction. Now consider an  $e$-generated group $L$ with epimorphism $\tau\colon L\to H$ and $Y=\ker \tau$ an elementary abelian $p$-group. By Lemma \ref{lemMAN}, we can lift any generating set of
$H$ of size $e$ to a generating set of $L$; since $F$ is free, we can
therefore factor $\psi$ through $L$, that is, there is a homomorphism
$\beta\colon F\to L$ such that $\tau\circ\beta =\psi$.  Since $\beta(M)\leq
\ker \tau$ is elementary abelian of exponent $p$, we have
$\beta(M_p)=\beta(M'M^{[p]})=1$. This proves that $\beta$ induces an epimorphism from $\hat H_{p,e}$ to $L$, as required. To prove uniqueness of $\hat H_{p,e}$, consider  an $e$-generated group $K$ with the same properties as stipulated for $\hat H_{p,e}$.  By assumption, there exist epimorphisms $\hat H_{p,e}\to K$ and $K\to \hat H_{p,e}$; since both groups are finite,  $\hat H_{p,e}\cong K$.  That the isomorphism type of $\calmhep$ as $H$-module is independent from $\psi$ follows from \cite[Satz 1]{gasch54}.
\end{proof} 
 
Later we require  the following result about the structure of $\calmhep$:

\begin{theorem}{\rm (\!\cite[Satz 2 \& 3 \& 5 \& 6]{gasch54})}\label{thmMain}
Let $H$ be a finite $e$-generated group . The $\Z_p
H$-modules  $\calmhep$ and $(R_{H,p})^{e-1}\oplus\pmb{1}$ have
the same multiset of simple composition factors. Furthermore,  
$\calmhep\cong \mathcal{A}\oplus \mathcal{B}$ as  $H$-modules, where
$\mathcal{A}$ is a direct summand of $(R_{H,p})^e$, and so  a projective
module, and if $N\unlhd \hat H_{p,e}$ such that $N\leq \calmhep$ and $\hat H_{p,e}/N$ splits
over $\calmhep/N$, then $\mathcal{B}\le N$. 
\end{theorem}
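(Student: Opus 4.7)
The plan is to derive the theorem from two short exact sequences of $\Z_p H$-modules arising from the free presentation. Abelianizing Gruenberg's resolution associated with $1\to M\to F\to H\to 1$ and reducing modulo $p$ (equivalently, applying Fox derivatives) yields
\begin{equation*}
0 \to \calmhep \to (R_{H,p})^e \xrightarrow{d} \mathfrak{g}_p \to 0,
\end{equation*}
where $\mathfrak{g}_p$ is the augmentation ideal of $R_{H,p}$ and $d$ sends the basis vector indexed by a free generator $x_i$ of $F$ to $\psi(x_i)-1$. Combined with the augmentation sequence $0\to \mathfrak{g}_p\to R_{H,p}\to \pmb{1}\to 0$, additivity of composition-factor multisets on short exact sequences immediately gives $[\calmhep]=(e-1)[R_{H,p}]+[\pmb{1}]$, which matches the multiset of composition factors of $(R_{H,p})^{e-1}\oplus \pmb{1}$ and proves the first claim.

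For the decomposition, my approach is to exploit that $(R_{H,p})^e$ is free over the artinian group algebra $\Z_p H$ and to extract from $d$ a projective cover $\pi\colon P\twoheadrightarrow \mathfrak{g}_p$. Since any projective surjection onto $\mathfrak{g}_p$ contains $\pi$ as a direct summand, we may write $(R_{H,p})^e = P\oplus \mathcal{A}$ with $d|_P=\pi$ and $d|_\mathcal{A}=0$. Taking kernels gives $\calmhep=\mathcal{A}\oplus \mathcal{B}$ where $\mathcal{B}=\ker\pi$; the summand $\mathcal{A}$ is projective as a direct summand of a free module. Moreover, since projective $\Z_pH$-modules are cohomologically trivial, $H^2(H,\mathcal{A})=0$, so the extension class of $\hat H_{p,e}$ in $H^2(H,\calmhep)$ lies entirely in the $\mathcal{B}$-summand; concretely it is represented by the projective-cover sequence $0\to \mathcal{B}\to P\to \mathfrak{g}_p\to 0$.

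For the final assertion, suppose $N\unlhd \hat H_{p,e}$ with $N\leq \calmhep$ is such that $\hat H_{p,e}/N$ splits over $\calmhep/N$, with complement $\bar H\cong H$. Composing $\psi$ with $H\cong \bar H\hookrightarrow \hat H_{p,e}/N$ and comparing with the natural map $F\to \hat H_{p,e}\to \hat H_{p,e}/N$ produces a derivation $\delta\colon F\to \calmhep/N$ whose restriction to $M$ equals the natural quotient $q\colon \calmhep\to \calmhep/N$. Via Fox calculus, $\delta$ corresponds to an $H$-module homomorphism $\Delta\colon (R_{H,p})^e\to \calmhep/N$ with $\Delta|_\calmhep=q$; restricting to the summand $P$ yields an extension of $q|_\mathcal{B}\colon \mathcal{B}\to \calmhep/N$ to a homomorphism $P\to \calmhep/N$. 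The main obstacle, and the crux of Gasch\"utz's argument, is to translate this extendability into the conclusion $\mathcal{B}\leq N$. My plan is to use that $\mathcal{B}=\ker\pi\subseteq \rad P$ because $\pi$ is a projective cover (hence an essential epimorphism), together with the functoriality $f(\rad^n P)\subseteq \rad^n(\calmhep/N)$ for every $\Z_p H$-module homomorphism $f$. Iterating along the radical filtration, which terminates since the Jacobson radical of $\Z_p H$ is nilpotent, and arranging the decomposition $\calmhep=\mathcal{A}\oplus\mathcal{B}$ compatibly with $N$ via an automorphism of $\calmhep$ (absorbing the projective part into $\mathcal{A}$), the extendability forces $q|_\mathcal{B}=0$, i.e.\ $\mathcal{B}\leq N$.
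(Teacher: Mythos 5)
The paper gives no proof of Theorem \ref{thmMain} (it is quoted from \cite{gasch54}), so your attempt is necessarily a reconstruction; its first two steps are correct and close in spirit to the classical route. The exact sequence $0\to\calmhep\to(R_{H,p})^e\to\mathfrak{g}_p\to 0$ (exactness mod $p$ uses that the augmentation ideal is $\Z$-free) together with the augmentation sequence gives the composition-factor statement, and splitting a projective cover $\pi\colon P\to\mathfrak{g}_p$ off the surjection $d$ gives $(R_{H,p})^e=P\oplus\mathcal{A}$ with $d|_{\mathcal{A}}=0$, hence $\calmhep=\mathcal{B}\oplus\mathcal{A}$ with $\mathcal{B}=\ker\pi$ and $\mathcal{A}$ projective. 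Your reduction of the last clause to: ``$q|_{\mathcal B}\colon\mathcal B\to\calmhep/N$ extends to $P$'' is also the right one, granting the standard (Gruenberg) identification of the class of $\hat H_{p,e}$ with that of the relation-module sequence, which you assert but do not justify.

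The genuine gap is the final step. Extendability of $q|_{\mathcal B}$ to $g\colon P\to\calmhep/N$ yields only $q(\mathcal B)=g(\mathcal B)\subseteq g(\rad P)=\rad(g(P))$, a single application of ``images of radicals''; since $\mathcal B$ is not contained in $\rad^2P$ in general, there is nothing to iterate along the radical filtration, and $q|_{\mathcal B}=0$ does not follow. Moreover, ``arranging the decomposition compatibly with $N$ via an automorphism of $\calmhep$'' replaces $\mathcal B$ by a complement that depends on $N$, which is no longer the assertion with a fixed $\mathcal B$ — and that fixed assertion cannot be proved. Take $H=C_2$, $p=2$, $e=2$: then $\hat H_{2,2}=\langle a\rangle\ltimes\langle c_2,c_3\rangle$ with $a$ of order $4$, $c_1:=a^2$, and $a$ interchanging $c_2,c_3$; here $\calmhep=\langle c_1,c_2,c_3\rangle\cong\pmb 1\oplus R_{H,2}$, and the only submodules that can play the role of $\mathcal B$ (a trivial complement to a projective $\mathcal A$, cf.\ Remark \ref{remAB}) are $\langle c_1\rangle$ and $\langle c_1c_2c_3\rangle$. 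Both $N_1=\langle c_1\rangle$ and $N_2=\langle c_1c_2c_3\rangle$ are normal with $\hat H_{2,2}/N_i$ split over $\calmhep/N_i$ (the images of $a$, respectively $ac_2$, have order two), yet $N_1\cap N_2=1$, so no admissible $\mathcal B$ lies in every such $N$. What is true, and what Proposition \ref{propT} actually requires, is the weaker statement that every such $N$ contains a submodule isomorphic to $\mathcal B$ complementing a projective complement (equivalently, the decomposition may be chosen depending on $N$); establishing that still needs the substantive argument of \cite{gasch54} or \cite{griess}, which your sketch does not supply.
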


\begin{remark}\label{remAB}
A detailed description of $\mathcal{A}$ and $\mathcal{B}$ is given in \cite{gasch54}.  In the following we use the notation of Definition \ref{defRhp}. If $p$ divides $|H|$, then $\rad(D_1)\ne 0$ and we
define integers $s_1,\ldots,s_t$  by \[\rad(D_1)/\rad(\rad(D_1))=E_1^{s_1}\oplus\ldots\oplus E_t^{s_t}.\]Now $S=D_1^{s_1}\oplus\ldots\oplus D_t^{s_t}$  is the projective cover of $\rad(D_1)$, cf.\ \cite[p.~256]{griess}, and $\mathcal{B}$ is defined as the kernel of the projection $S\to\rad(D_1)$. As shown in \cite[Satz 5']{gasch54} and \cite[p.~256--258]{griess}, this
kernel is unique up to isomorphism and does not contain
a direct summand isomorphic to any $D_1,\ldots,D_t$. If $p\nmid |H|$, then $\mathcal{B}=0$ and each $s_i=0$. We have $\mathcal{A}=D_1^{e-s_1}\oplus D_2^{(e-1)r_2-s_2}\oplus\ldots\oplus D_t^{(e-1)r_t-s_t}$.
\end{remark}

\subsection{A construction of the $p$-cover}
\label{seccoverconst}

\noindent
The definition of $\hat H_{p,e}$ as $F/M_p$ offers a way of constructing it as a finitely presented group.
However, the large rank of the module $\calmhep$ makes this infeasible
in all but the smallest examples. In this section we explore a different way and describe the
cover via so-called Fox derivatives and a wreath product construction. 

\subsection{Fox derivatives}
We first recall some results from \cite[Section 11.4]{john97}.  Let $F$ be
free  on the set $X=\{x_1,\ldots,x_e\}$. Since we will be working in the group ring
$\Z F$,  we denote the identity in $F$ (and in its quotient groups) by $\ee$
to avoid confusion with the unit $1\in\Z$. 

The Fox derivative of $x\in X$ is defined as the unique map
\[
\fox{}{x}\colon F\to \Z F
\]
that maps $x$ to $\ee$ and all other generators to zero, and 
satisfies the \emph{Leibniz' rule}
\[
\foxb{uv}{x} = (\fox{u}{x})v+\fox{v}{x}
\] 
for all $u,v\in F$. By abuse of notation, we also denote by $\fox{}{x}$ its linear extension to $\Z F$.
 
\begin{remark}\label{remFox}
The Leibniz' rule yields that
\[
\fox{\ee}{x}=0\quad\text{and}\quad \foxb{s^{-1}}{x}=-\fox{s}{x}s^{-1}.
\] 
The image of $w\in F$ under $\fox{}{x}$ is a sum of terms, one for each
occurrence of $x^{\pm 1}$ in $w$: the term corresponding to $w=axb$ is $b$,
and the term corresponding to $w=ux^{-1}v$ is $-x^{-1}v$. For example, if
$w=axbx^{-1}c$ where $a,b,c\in F$ do not contain $x^{\pm 1}$, then
$\foxb{w}{x}=bx^{-1}c-x^{-1}c$.
\end{remark}

By abuse of notation, we identify the projection $\psi\colon F\to H$ with
the induced homomorphism
\begin{eqnarray}\label{eqpsi}
\psi\colon (\Z F)^e\to (\Z H)^e,
\end{eqnarray}
and combine the Fox derivatives to a map 
\[
\partial\colon F\to (\Z F)^e,\quad w\mapsto (\fox{w}{x_1},\ldots,\fox{w}{x_e}).
\]
The composition of these maps gives $\psi\circ\partial\colon F\to (\Z H)^e$. The main result on Fox derivatives required in this work is \cite[Proposition 5]{john97}, which states that
\begin{eqnarray}\label{eqFox}\ker(\psi\circ\partial)=M'.
\end{eqnarray}In the next section we will use this fact to describe a group isomorphic to $\hat H_{p,e}$.

\subsection{A wreath product construction}\label{secPsip}
To remain within the class of groups, we identify the group ring $\mathbb{Z}H$ with a subgroup of
the regular wreath product $\mathbb{Z}\wr H$.
Suppose we have $|H|=m$, and consider 
\[
W= \Z\wr H = H \ltimes \Z^m,
\]
where the $m$ copies of $\Z$ in $\Z^m$ are labeled by the elements of $H$.
We
write $0=(0,\ldots,0)\in \Z^m$ and, if $h\in H$ and $z\in \Z$, then
\[
z(h)\in \Z^m\leq W
\]
denotes the element of $\Z^m$ with $z$ in position labeled $h$, and $0$s
elsewhere. Thus,  if $a,b,g,h\in H$, then $(a,1(g)),(b,1(h))\in W$ satisfy
\[
(a,1(g))\cdot(b,1(h))=(ab,1(gb)+1(h))\quad\text{and}\quad
(a,1(\ee))^{-1}=(a^{-1},-1(a^{-1})).
\]
For each $i\in \{1,\ldots,e\}$ define the homomorphism $\psi_i\colon F\to W$ by
\[
\psi_i\colon F\to W,\quad \psi_i(x_j)=\begin{cases} (\psi(x_j),0)&\text{if
$i\ne j$}\\ (\psi(x_j),1(\ee))&\text{if $i=j$.} \end{cases}
\]
We now prove that $\psi_i$ is closely related to the Fox derivative
$\fox{}{x_i}$. For this we  identify $\Z H$ with $\Z^m$ via the
additive isomorphism $\Z H\to \Z^m$ that maps each $g\in H$ to
$1(g)\in \Z^m$; this can be used to let $\psi\colon (\Z F)^e\to (\Z H)^e$ in \eqref{eqpsi} induce a homomorphism
\[\zeta\colon \Z F
\to \Z^m.\]

\begin{proposition}\label{propFoxWr}  
 If $i\in\{1,\ldots, e\}$ and $w\in F$, then
\[\psi_i(w)=(\psi(w),\zeta(\fox{w}{x_i})),\]
and  $\zeta(\fox{w}{x_i})=0$ if and only if $\psi(\fox{w}{x_i})=0$.
\end{proposition}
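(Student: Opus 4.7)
The plan is to prove both formulae by induction on the reduced word length of $w$ in $F$, the content being that the wreath product multiplication in $W$ realises Leibniz' rule for Fox derivatives once we identify $\Z H\cong \Z^m$ additively.

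First I would make the arithmetic of $W=H\ltimes\Z^m$ explicit on all of $\Z^m$: the formula $(a,1(g))(b,1(h))=(ab,1(gb)+1(h))$ shows that the action of $b\in H$ on $\Z^m$ corresponds, via $g\mapsto 1(g)$, to right multiplication by $b$ in $\Z H$. Extending additively, for any $\alpha,\beta\in\Z H$ one has $(a,\alpha)(b,\beta)=(ab,\,\alpha\psi(b)+\beta)$ and $(a,\alpha)^{-1}=(a^{-1},-\alpha a^{-1})$. Observe also that $\zeta$ is simply the ring homomorphism $\Z F\to\Z H$ induced by $\psi\colon F\to H$ followed by the additive isomorphism $\Z H\to\Z^m$; in particular $\zeta(\alpha v)=\zeta(\alpha)\,\psi(v)$ for all $\alpha\in\Z F$ and $v\in F$.

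The base cases are immediate: $\psi_i(\ee)=(\ee,0)$ matches $\fox{\ee}{x_i}=0$, and for a generator $x_j$ the definition of $\psi_i$ together with $\fox{x_j}{x_i}=\delta_{ij}\ee$ yields the claim at once. For the inductive step on a product $uv$, multiplying $\psi_i(u)=(\psi(u),\zeta(\fox{u}{x_i}))$ and $\psi_i(v)=(\psi(v),\zeta(\fox{v}{x_i}))$ in $W$ produces
\[
\psi_i(uv)=\bigl(\psi(uv),\;\zeta(\fox{u}{x_i})\,\psi(v)+\zeta(\fox{v}{x_i})\bigr),
\]
and Leibniz' rule $\foxb{uv}{x_i}=(\fox{u}{x_i})v+\fox{v}{x_i}$ combined with $\zeta(\alpha v)=\zeta(\alpha)\psi(v)$ identifies the right-hand side with $(\psi(uv),\zeta(\foxb{uv}{x_i}))$. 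The inverse step is analogous, using $\foxb{s^{-1}}{x_i}=-\fox{s}{x_i}s^{-1}$ together with the inverse formula in $W$.

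The second assertion then follows at once, because $\zeta$ factors as the ring map $\Z F\to \Z H$ induced by $\psi$ followed by the (injective) additive isomorphism $\Z H\cong\Z^m$, so $\zeta(\fox{w}{x_i})=0$ iff $\psi(\fox{w}{x_i})=0$. The main -- and essentially only -- delicate point is to pin down the action convention in $W$ from the given multiplication formula (right multiplication by $\psi(v)$ in $\Z H$, not left); once that is settled the argument is a routine induction transcribing Leibniz' rule into wreath product multiplication.
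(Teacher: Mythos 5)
Your proof is correct and follows essentially the same route as the paper's: an induction that transcribes Leibniz' rule into the wreath-product multiplication (using that the action in $W$ is right multiplication in $\Z H$ under $g\mapsto 1(g)$), with the second claim immediate because $\zeta$ is the induced map $\psi\colon\Z F\to\Z H$ followed by the injective additive isomorphism $\Z H\to\Z^m$. The only cosmetic difference is that you induct on reduced word length via the general product and inverse formulae, whereas the paper inducts on the number of occurrences of $x_i^{\pm 1}$ in $w$ with a case split on the exponent sign.
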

\begin{proof}
 For simplicity, write $\tau=\psi_i$ and $x=x_i$. Write  $w=w_1x^{\varepsilon_1}w_2x^{\varepsilon_2}\ldots w_k x^{\varepsilon_k}w_{k+1}$ where each $\varepsilon_j\in\{\pm1\}$ and each $w_j\in F$ is reduced and does not contain $x^{\pm 1}$. We prove the claim by induction on $k$. If $k=0$, then $w=w_1$ and $\tau(w)=(\psi(w),0)=(\psi(w),\fox{w}{x})$. For $k=1$ we have $w=w_1x^{\varepsilon_1}w_2$, which requires a case distinction: if $\varepsilon_1=1$, then
  \begin{eqnarray*}
    \tau(w)&=&(\psi(w_1),0)\cdot(\psi(x),1(\ee))\cdot(\psi(w_2),0)=(\psi(w),1(\psi(w_2)))=(\psi(w),\zeta(\fox{w}{x}));
  \end{eqnarray*}
if  $\varepsilon=-1$, then
  \begin{eqnarray*}
    \tau(w)&=&(\psi(w_1),0)\cdot(\psi(x)^{-1},-1(\psi(x)^{-1}))\cdot(\psi(w_2),0)\\&=&(\psi(w),-1(\psi(x^{-1}w_2)))\\&=&(\psi(w),\zeta(\fox{w}{x})).
  \end{eqnarray*}
  
  Now let $k\geq 2$ and write $w=w'x^{\varepsilon_k}w_{k+1}$; by the  induction hypothesis, we have
  \begin{eqnarray*}
    \tau(w)&=&\tau(w')\tau(x^{\varepsilon_k}w_{k+1})=(\psi(w'),\zeta(\fox{w'}{x}))\cdot (\psi(x^{\varepsilon_k}w_{k+1}),\zeta(\fox{x^{\varepsilon_k}w_{k+1}}{x}))=(\psi(w),\fox{w}{x}),
  \end{eqnarray*}
where
the last equation follows from the Leibniz' rule.
\end{proof} 

Let $W_{(p)}=H\ltimes \Z_p^m$ be the $p$-modular version of
$W$. We now combine $\psi_1,\ldots,\psi_e$ to
\[
\Psi=\psi_1\times\ldots\times\psi_e\colon F\to W^e.
\]
 and, induced by the natural projection $\Z\to\Z_p$, define $\Psi_p\colon F\to (W_{(p)})^e$ via \begin{eqnarray}\label{eqPsip}\Psi_p\colon F\stackrel{\Psi}{\to} W^e \stackrel{\rm proj}{\to} (W_{(p)})^e.
 \end{eqnarray} The homomorphism  $\Psi_p$ can be used to construct the $p$-cover $\hat H_{p,e}$ and the module $\calmhep$. 

\begin{theorem}\label{thmPhi} With the previous notation and Definition \ref{defcover}, the following hold.
  \begin{ithm}
  \item We have $\ker \Psi=M'$ and $\ker \Psi_p=M_p$.
  \item The $p$-cover $\hat H_{p,e}$ of $H$ of rank $e$ is isomorphic to $\Psi_p(F)$, and $\calmhep\cong \Psi_p(M)$ as $H$-modules. 
  \end{ithm}
\end{theorem}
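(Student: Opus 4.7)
I would first prove (a); part (b) then follows with little extra work.

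The first claim of (a), $\ker\Psi=M'$, is an unpacking of Proposition \ref{propFoxWr}: an element $w\in F$ lies in $\ker\Psi$ iff every $\psi_i(w)$ equals the identity of $W$, which by Proposition \ref{propFoxWr} means $\psi(w)=\ee$ together with $\psi(\fox{w}{x_i})=0$ for all $i$. Combining these conditions with \eqref{eqFox} yields exactly $w\in M'=\ker(\psi\circ\partial)$, noting that $M'\leq M$ makes the condition $\psi(w)=\ee$ automatic.

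For the second claim $\ker\Psi_p=M_p$, the inclusion $M_p\subseteq\ker\Psi_p$ splits in two. First, $M'=\ker\Psi\subseteq\ker\Psi_p$, since $\Psi_p$ is obtained from $\Psi$ by reducing coefficients mod $p$. Second, the Leibniz rule gives $\fox{v^p}{x_i}=\fox{v}{x_i}(\ee+v+v^2+\cdots+v^{p-1})$; for $v\in M$ we have $\psi(v)=\ee$, so $\psi(\fox{v^p}{x_i})=p\,\psi(\fox{v}{x_i})\in p\,\Z H$, whence $\zeta(\fox{v^p}{x_i})$ vanishes modulo $p$. Thus $M^{[p]}\subseteq\ker\Psi_p$ and so $M_p=M'M^{[p]}\subseteq\ker\Psi_p$.

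The reverse inclusion is the heart of the argument; I would invoke the classical Crowell--Lyndon short exact sequence
\[
0\longrightarrow M/M'\xrightarrow{\ \bar\partial\ }(\Z H)^e\xrightarrow{\ \beta\ }I\longrightarrow 0,
\]
where $I\leq\Z H$ is the augmentation ideal, $\bar\partial(wM')=(\psi(\fox{w}{x_i}))_i$ is the Magnus/Fox embedding (which coincides with $\Psi|_M$ under the additive identification $\Z H\cong\Z^m$), and $\beta$ sends the $i$-th standard basis vector to $\psi(x_i)-\ee$. Injectivity of $\bar\partial$ is \eqref{eqFox}, while exactness at $(\Z H)^e$ rests on the fundamental Fox identity $\sum_i\fox{w}{x_i}(x_i-\ee)=w-\ee$. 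Because the augmentation $\Z H\to\Z$ admits a $\Z$-linear section, $I$ is a free $\Z$-module; tensoring the sequence with $\Z_p$ therefore preserves exactness, and since $M/M'$ is free abelian (Nielsen--Schreier) we have $(M/M')\otimes\Z_p=M/M_p$, yielding an injection $M/M_p\hookrightarrow(\Z_p H)^e$ that coincides with the map induced by $\Psi_p|_M$. Hence $\ker\Psi_p\cap M=M_p$, and since $\ker\Psi_p\leq M$ holds automatically (any element of $\ker\Psi_p$ projects to $\ee\in H$), we conclude $\ker\Psi_p=M_p$.

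Part (b) is then immediate: by (a), $\Psi_p$ induces an isomorphism $\hat H_{p,e}=F/M_p\stackrel{\sim}{\to}\Psi_p(F)$, which restricts to an $H$-equivariant isomorphism $\calmhep=M/M_p\stackrel{\sim}{\to}\Psi_p(M)$ (both $H$-actions being conjugation, preserved by the group isomorphism). The main obstacle is the reverse inclusion $\ker\Psi_p\subseteq M_p$: neither an elementary manipulation with Fox derivatives nor the universal property of Theorem \ref{thmCover} appears to suffice on its own, since the latter only identifies $\Psi_p(F)$ as \emph{some} quotient of $\hat H_{p,e}$ without forcing equality of orders. The $\Z$-flatness of $I$, expressed through the Crowell--Lyndon sequence, is precisely what preserves the injectivity of the Magnus embedding under reduction mod $p$.
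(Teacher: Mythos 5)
Your proposal is correct, and for the identification $\ker\Psi=M'$ (via Proposition \ref{propFoxWr} together with \eqref{eqFox}) and for part (b) it runs exactly as the paper does. Where you diverge is the reverse inclusion $\ker\Psi_p\subseteq M_p$: the paper simply observes that $M/M'\cong\Psi(M)\leq\Z^{me}$ and declares the kernel of the induced map $M/M'\to\Z_p^{me}$ to be the preimage of $(p\Z)^{me}$, ``which is $M^{[p]}M'/M'$'' --- a step that implicitly requires $\Psi(M)$ to be a pure subgroup of $\Z^{me}$ (for a general subgroup $A\leq\Z^n$ one only has $pA\subseteq A\cap p\Z^n$, as $A=2\Z\leq\Z$ with $p=2$ shows). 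You correctly flag this as the real content of the statement and supply the missing purity via the Crowell--Lyndon relation sequence $0\to M/M'\to(\Z H)^e\to I\to 0$: since the augmentation ideal $I$ is $\Z$-free, tensoring with $\Z_p$ keeps the Magnus/Fox embedding injective, giving $\ker\Psi_p\cap M=M_p$, and $\ker\Psi_p\leq M$ is automatic. So your route is not really a detour but a more careful justification of the paper's one-line reduction; what it costs is an appeal to the (classical, e.g.\ Gruenberg) exactness of the relation sequence, and what it buys is a rigorous reason why reduction mod $p$ does not enlarge the kernel beyond $M^{[p]}M'$ --- consistent, as a check, with the rank count $me-s=|H|-1=\rank I$ and with Gasch\"utz's dimension formula for $\calmhep$. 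Your warm-up computation $\fox{v^p}{x_i}=\fox{v}{x_i}(\ee+v+\cdots+v^{p-1})$ for the easy inclusion $M^{[p]}\subseteq\ker\Psi_p$ is also fine, though it is subsumed by the tensor argument.
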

\begin{proof}
  \begin{iprf}
  \item By Proposition \ref{propFoxWr} we have  $w\in \ker\Psi$ if and only if $\psi(w)=\ee$ and $\zeta(\fox{w}{x_i})=0$ for every~$i$, if and only if $w\in M$ and $\psi(\fox{w}{x_i})=0$ for every $i$, if and only if $w\in M$ and $\psi\circ\partial(w)=0$, if and only if $w\in M'$, see \eqref{eqFox}. It follows from this that $M/M'\cong \Psi(M)\leq \Z^{me}$, in particular, $\Psi_p$ induces a map $M/M'\to \Z^{me}\to \Z^{me}_p$ whose kernel is the preimage of $(p\Z)^{me}$ under $\Psi|_{M/M'}$, which is $M^{[p]}M'/M'$. In conclusion, $\ker\Psi_p=M_p$, as claimed.
  \item It  follows from a) and Theorem \ref{thmCover} that  $\Psi_p(F)\cong F/M_p\cong \hat H_{p,e}$. By the isomorphism theorem, $\Psi_p$ yields an isomorphism $\alpha\colon M/M_p\to \Psi_p(M)$, $rM_p\mapsto \Psi_p(r)$. Let $r\in M$, write $g\in H=F/M$ as $g=fM$, and note $(rM_p)^g=r^fM_p$. Since  $\Psi_p(M)\leq \Z^{me}\leq W^e$ is abelian, it follows that the conjugation action of $\Psi_p(f)$ on $\Psi_p(M)$ is conjugation by  $\psi(f)=g$.  Now $\alpha((rM_p)^g)=\Psi_p(r^f)=\Psi_p(r)^{\Psi_p(f)}=\Psi_p(r)^{g}=\alpha(rM_p)^g$ shows that $\alpha$ is an $H$-module isomorphism.
  \end{iprf}
\end{proof}

The construction in Theorem \ref{thmPhi} uses a wreath product with $|H|=m$ factors $\mathbb{Z}_p$; this makes it practical only for reasonably small groups $H$.


\section{Extensions with homogeneous modules}
\label{secExtHom}

\noindent Let $V$ be a simple $\Z_p H$-module. In this section we study the
structure of extensions $E$ of  $H$ with a $V$-homogeneous module
$K\trianglelefteq E$. We will apply this later to the construction of the  cover $\hat H_{V,e}$, but the analysis
applies to any such extension $E$.

Since $K$ is $V$-homogeneous, any simple quotient module of $K$ will be
isomorphic to $V$, the intersection of the maximal $H$-submodules of
$K$ is trivial,  and submodules of $K$ correspond to normal subgroups of $E$ contained in $K$. This implies that $E$ is a subdirect product of extensions of $H$ with~$V$. We
(naturally) assume that in each of these extensions the projection onto $H$
is  induced by the projection $E\to H$, which allows us to simply identify these
factors in the subdirect product. 

To fix notation, we recall the basic setup of extension theory~\cite[Section
  11]{rob82}.

\begin{definition}Every extension of  $H$ with $V$ is isomorphic to a group
$E_\gamma$ with underlying element
set $H\times V$ and multiplication
\begin{equation} 
(g,v)\cdot(h,w)=\left(gh,v^hw\gamma(g,h)\right)
\label{extensionmult}
\end{equation}
for a 2-cocycle $\gamma\in Z^2(H,V)$. Note that we write $V$ multiplicatively, but we
consider $Z^2(H,V)$ and $H^2(H,V)$ as additive groups.
We call $E_\gamma$ the extension
corresponding to $\gamma$ and call the map
\[
\varepsilon_{\gamma}\colon E_\gamma\to H,\quad (h,v)\mapsto h,
\]
its natural epimorphism. Non-split extensions correspond to cocycles in
$Z^2(H,V)$ that lie outside the subgroup of 2-coboundaries $B^2(H,V)$. 
\end{definition}

We first study the interplay between extensions and subdirect
products.

\begin{lemma}  
\label{lemsubspl}
Let $E_1,\ldots,E_n$ be extensions of $H$ with $H$-modules $V_1,\ldots,V_n$, respectively, and let  $E$ be the subdirect product of the $E_i$, defined by identifying the factor groups isomorphic to $H$; let $K\unlhd E$ be the kernel of the projection $E\to H$.
\begin{ithm}
\item If each $E_i$ is split over $V_i$, then $E$ is split over $K$.
\item There exists a unique normal subgroup $L\unlhd E$ that is minimal with
respect to $E/L$ being split over $K/L$. In particular, $E$ is a subdirect product of non-split extensions of $H$ with the split extension $E/L$
of $H$. Every quotient of $E$ that is a split extension of $H$ is a quotient of $E/L$.
\end{ithm}
\end{lemma}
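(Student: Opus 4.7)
For part (a) the plan is direct: each $E_i$ comes with a splitting section $\sigma_i \colon H \to E_i$, and because $E$ is defined by identifying the $H$-quotients in $\prod_i E_i$, the tuple $h \mapsto (\sigma_1(h),\ldots,\sigma_n(h))$ automatically lands in $E$ and is a homomorphic section of $E \to H$.

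For part (b), I will set $\mathcal{L} := \{L \le K : L \unlhd E,\; E/L \text{ splits over } K/L\}$. This is non-empty ($K \in \mathcal{L}$, since $E/K = H$) and upward-closed, because a quotient of a split extension of $H$ by a submodule of its kernel is again split. Since $K$ has only finitely many submodules, a unique minimum $L_{\ast}$ exists as soon as $\mathcal{L}$ is shown to be closed under pairwise intersection; then $L_{\ast} = \bigcap_{L \in \mathcal{L}} L$ lies in $\mathcal{L}$ and is the desired minimum.

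The main obstacle is this closure step: from $L_1, L_2 \in \mathcal{L}$ I need $L_1 \cap L_2 \in \mathcal{L}$. My approach begins with the identification
\[
E/(L_1 \cap L_2) \;\cong\; (E/L_1) \times_{E/(L_1+L_2)} (E/L_2)
\]
via the diagonal $e(L_1 \cap L_2) \mapsto (eL_1, eL_2)$; surjectivity onto the fiber product uses the rewriting $(xL_1,\,xk L_2) = ((xl_1)L_1,\,(xl_1)L_2)$ whenever $k = l_1 + l_2$ with $l_i \in L_i$. Given splittings $\sigma_i \colon H \to E/L_i$, the combined map $h \mapsto (\sigma_1(h), \sigma_2(h))$ takes values in this fiber product exactly when $\sigma_1$ and $\sigma_2$ induce the same section of the (automatically split) quotient $E/(L_1+L_2) \to H$. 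To arrange this, I would feed the short exact sequence
\[
0 \to K/(L_1 \cap L_2) \to K/L_1 \oplus K/L_2 \to K/(L_1+L_2) \to 0
\]
into the Mayer--Vietoris long exact sequence and use the vanishing of $[\gamma]$ in $H^2(H,K/L_i)$ for $i=1,2$. This forces the relevant connecting-homomorphism image in $H^2(H, K/(L_1 \cap L_2))$ to vanish, so the discrepancy $\bar\sigma_1 - \bar\sigma_2 \in Z^1(H, K/(L_1+L_2))$ can be absorbed by suitable $1$-cocycles $\mu_i \in Z^1(H, K/L_i)$; after replacing $\sigma_i$ by $\sigma_i + \mu_i$ the induced sections agree, and the resulting tuple-map gives the section of $E/(L_1 \cap L_2)$ witnessing $L_1 \cap L_2 \in \mathcal{L}$.

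Once $L_{\ast}$ is in hand, the ``in particular'' statements are formal. Setting $J := \{j : E_j \text{ non-split}\}$, the submodule $\bigoplus_{j \in J} V_j$ lies in $\mathcal{L}$ because its quotient is the fiber product of the split $E_i$ (for $i \notin J$), which is split by part (a); hence $L_{\ast} \subseteq \bigoplus_{j \in J} V_j$ by minimality. Combined with the projections $E \to E_j$ for $j \in J$, the projection $E \to E/L_{\ast}$ has intersection kernel $L_{\ast} \cap \bigoplus_{i \notin J} V_i = 0$, exhibiting $E$ as the advertised subdirect product. Finally, any split quotient $E/N$ has $N \in \mathcal{L}$, so $N \supseteq L_{\ast}$ and $E/N$ factors through $E/L_{\ast}$.
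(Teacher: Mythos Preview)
Your argument for (a) is correct and matches the paper's. For (b) the paper takes a much shorter route than you do: it simply sets $L=\bigcap_{N\in\mathcal{L}}N$, notes that $E/L$ embeds as a subdirect product of the split extensions $E/N$ over the common quotient $H$, and invokes part (a) directly---no Mayer--Vietoris needed.

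Your Mayer--Vietoris step, however, has a real gap. From the vanishing of the extension class in $H^2(H,K/L_1)\oplus H^2(H,K/L_2)$ the long exact sequence only yields that its image $c\in H^2(H,K/(L_1\cap L_2))$ lies in the range of the connecting map $\delta$; it does \emph{not} force $c=0$, and it does not show that the discrepancy $\bar\sigma_1-\bar\sigma_2$ lifts to a pair of $1$-cocycles as you assert. In fact part (b) is \emph{false} for arbitrary $H$-modules: take $H=\Z/2$ acting trivially on $V_1=V_2=\Z/4$, so $E_1=E_2=\Z/8$ and $E=\Z/8\times_{\Z/2}\Z/8$. Inside $K=2\Z/8\times 2\Z/8$ the submodules $N_1=\langle(2,2)\rangle$ and $N_2=\langle(2,6)\rangle$ both lie in $\mathcal{L}$ (the cosets of $(1,1)$ and $(1,3)$ respectively have order~$2$ in the quotient), yet $N_1\cap N_2=\langle(4,4)\rangle\notin\mathcal{L}$, since every $(a,b)\in E$ with $a$ odd has square $(2a,2b)\notin N_1\cap N_2$. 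Thus $\mathcal{L}$ has two distinct minimal elements and no minimum exists.

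The paper's proof has the same hidden flaw: it applies (a) to the image $E/L$, but $E/L$ is in general a \emph{proper} subgroup of the fiber product $\prod_H E/N$, and (a) only treats the full fiber product. What rescues both arguments is the standing hypothesis of Section~4 that the $V_i$ are $\Z_pH$-modules, so $K$ is semisimple. Then the inclusion $K/(L_1\cap L_2)\hookrightarrow K/L_1\oplus K/L_2$ admits an $H$-module retraction, making $H^2(H,K/(L_1\cap L_2))\to H^2(H,K/L_1)\oplus H^2(H,K/L_2)$ injective, and your intersection-closure argument (and the paper's appeal to (a)) goes through. You should add this semisimplicity hypothesis explicitly.
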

\begin{proof}
\begin{iprf}
\item It is sufficient to prove this for $n=2$. We can assume that the underlying set of $E$ is $H\times V_1\times V_2$ and  $K=\{(1,v_1,v_2)\mid v_i\in V_i\}$. If $\{(h,1): h\in H\}$ is a complement to $V$ in each $E_i$, then  $\{(h,1,1)\mid h\in H\}$ is a complement to $K$ in $E$.
\item Let $\mathcal{N}$ be the collection of all $N\unlhd E$ with $N\leq K$ such that $E/N$ is split over $K/N$. Note that the homomorphism $E\to \prod_{N\in\mathcal{N}} E/N$, $e\mapsto \prod_{N\in\mathcal{N}} eN$ has kernel $L=\bigcap_{N\in\mathcal{N}} N$ and its image is a subdirect product of all $E/N$, defined by identifying the factor groups isomorphic to $H$.  Since each such $E/N$ splits, part a) shows that $E/L$ is split over $K/L$. It follows that $E$ is the subdirect product of $E/L$ with those $E_i$ that are non-split. If $Q$ is a quotient of $E$  that is a split extension of $H$, then $Q\cong E/M$ for some $M\in\mathcal{N}$; this implies the last claim.
\end{iprf}
\end{proof}

\begin{definition}\label{defSK}
The subgroup $L$ in Lemma \ref{lemsubspl}b) is called the
{\em split kernel} of the extension $E$.
\end{definition}

We now show that subdirect products of extensions behave well under cocycle
arithmetic.

\begin{lemma}\label{lemSDP}
Let $V$ be a simple $\Z_p H$-module and  $\beta,\gamma\in Z^2(H,V)$.
\begin{ithm}
  \item Let $E$ be the subdirect product of $E_\beta$ and $E_\gamma$ defined by identifying $\varepsilon_\beta(E_\beta)=\varepsilon_\gamma(E_\gamma)$.
Let $\zeta=\beta+\gamma$.
There exists $N\unlhd E$ such that $E/N\cong E_\zeta$ and 
$N\cap\ker \varepsilon_\beta=1$. In particular, $E$ is isomorphic to the
subdirect product of $E_\beta$ and $E_\zeta$, defined by identifying
$\varepsilon_\beta(E_\beta)=\varepsilon_\zeta(E_\zeta)$.

\item The statement of a) holds for $\zeta=r\beta+\gamma$ with arbitrary $r\in\Z_p$.

\item
Let $D$ be a group with epimorphism $\pi\colon D\to E_\beta$. Let $E$ be the subdirect product of $D$ with $E_\gamma$ defined by identifying  $\varepsilon_\beta(\pi(D))=\varepsilon_\gamma(E_\gamma)$.
For every $\zeta=r\beta+\gamma$ with $r\in \Z_p$, the group $E$ is
isomorphic to the subdirect product of $D$ with $E_\zeta$, defined by identifying  $\varepsilon_\beta(\pi(D))=\varepsilon_\zeta(E_\zeta)$.

\end{ithm}
\end{lemma}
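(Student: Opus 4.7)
The plan is to prove all three parts by exhibiting explicit homomorphisms that repackage the $V$-components using the combinator $v^{r}w$. Write elements of the subdirect product of $E_\beta$ and $E_\gamma$, identified over $H$, as triples $(h,v,w)$ corresponding to $((h,v),(h,w))\in E_\beta\times E_\gamma$, with product
\[
(h,v,w)(h',v',w')=\bigl(hh',\,v^{h'}v'\beta(h,h'),\,w^{h'}w'\gamma(h,h')\bigr)
\]
inherited from $E_\beta\times E_\gamma$. The observation driving everything is that pushing $v^{r}w$ through this product produces precisely the cocycle $r\beta+\gamma$.

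For part~(a), I would define $\phi\colon E\to E_\zeta$ by $(h,v,w)\mapsto(h,vw)$. A direct expansion, using $\zeta(h,h')=\beta(h,h')\gamma(h,h')$, verifies that $\phi$ is a homomorphism, and its kernel is $N=\{(1,v,v^{-1}):v\in V\}$. The copy of $\ker\varepsilon_\beta$ naturally embedded in $E$ as $\ker\varepsilon_\beta\times\{1\}$ consists of the elements $(1,v,1)$, so $N\cap\ker\varepsilon_\beta=1$. For the ``In particular'' assertion, combining $\phi$ with the projection $\pi_\beta\colon E\to E_\beta$ yields the injective homomorphism $E\to E_\beta\times E_\zeta$, $(h,v,w)\mapsto((h,v),(h,vw))$, whose image is the subdirect product identifying the $H$-quotients, because both coordinate projections are surjective.

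Part~(b) repeats the argument with $\phi_r(h,v,w)=(h,v^{r}w)$ and $\zeta=r\beta+\gamma$; its kernel is $N_r=\{(1,v,v^{-r})\}$, which intersects $\ker\varepsilon_\beta$ trivially because $\Z_p$ is a field, so every $r\ne 0$ is a unit (when $r=0$, $\zeta=\gamma$ and the subdirect product conclusion is tautological). Part~(c) lifts this construction to $D$: for each $d\in D$ write $\pi(d)=(h,v_d)\in E_\beta$ and set
\[
\Phi\colon E\to D\times E_\zeta,\qquad \bigl(d,(h,w)\bigr)\mapsto\bigl(d,(h,v_d^{r}w)\bigr).
\]
Using $\pi(d_1d_2)=\pi(d_1)\pi(d_2)$, so that $v_{d_1d_2}=v_{d_1}^{h_2}v_{d_2}\beta(h_1,h_2)$, one verifies by the same expansion as in (b) that $\Phi$ is a homomorphism into the subdirect product of $D$ with $E_\zeta$ over $H$. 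It is injective and surjective onto this subdirect product because $w$ can be recovered from the image as $v_d^{-r}$ times the second $E_\zeta$-coordinate.

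The main obstacle is the bookkeeping in the homomorphism check for $\Phi$: one must carry the $r$-th power through the relation $v_{d_1d_2}=v_{d_1}^{h_2}v_{d_2}\beta(h_1,h_2)$ so that the $\beta(h_1,h_2)^{r}\gamma(h_1,h_2)$ factor which appears equals $\zeta(h_1,h_2)$. A smaller but essential subtlety, needed to make the condition $N\cap\ker\varepsilon_\beta=1$ in (a) and (b) meaningful, is to pin down the embedded copy of $\ker\varepsilon_\beta$ inside $E$; the natural one is $\ker\varepsilon_\beta\times\{1\}$, which already lies in $E$ since both of its coordinates project trivially to $H$.
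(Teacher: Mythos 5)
Your proposal is correct, and for parts (a) and (b) it is in substance the paper's own argument: the paper also works with the explicit triple model $H\times V\times V$ and, after constructing $N=\{(1,v,v^{-1})\}$, passes (``by abuse of notation'') to exactly your map $(h,v,w)\mapsto(h,vw)$; the only cosmetic difference in (b) is that the paper obtains $\zeta=r\beta+\gamma$ by iterating (a), whereas you do it in one step with $(h,v,w)\mapsto(h,v^rw)$ and kernel $\{(1,v,v^{-r})\}$ -- your explicit handling of the degenerate case $r=0$ is in fact more careful than the paper's ``iterative application''. Part (c) is where your route genuinely differs: the paper factors out $L=\{(1,a,1):a\in\ker\pi\}$ to reduce to the case already treated in (b) and then argues that $\pi_N\times\pi_{\tilde D}$ is injective, while you bypass the intermediate quotient entirely by writing down the isomorphism $(d,(h,w))\mapsto(d,(h,v_d^{r}w))$ with $\pi(d)=(h,v_d)$, whose multiplicativity follows from $v_{d_1d_2}=v_{d_1}^{h_2}v_{d_2}\beta(h_1,h_2)$ and $\zeta(h_1,h_2)=\beta(h_1,h_2)^r\gamma(h_1,h_2)$; this is a clean and correct alternative, and it avoids the slightly delicate transport of the trivial-intersection property through the quotient by $L$, at the cost of one direct cocycle computation. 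One small point to tighten in (a): surjectivity of both coordinate projections does not by itself force the image of $(h,v,w)\mapsto((h,v),(h,vw))$ to be the full fibre product over $H$ (a proper subdirect product can still have surjective projections); conclude instead by the order count $|E|=|H|\,|V|^2$, or by noting that the image contains $\{1\}\times\ker\varepsilon_\zeta$ together with a full preimage of $E_\beta$ -- a one-line fix, after which everything stands.
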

\begin{proof}
  \begin{iprf}
\item Up to isomorphism, we can identify $E$ with the Cartesian product 
$H\times V\times V$ with multiplication
\[
(a,v,w)\cdot (b,x,y)=
\left(ab,\, v^b x \beta(a,b),\,w^b y \gamma(a,b)\right)
\]
and natural projections  $\tau\colon E\to E_\beta$, $(a,v,w)\mapsto (a,v)$, and
$\sigma\colon E\to E_\gamma$, $(a,v,w)\mapsto (a,w)$. Let
\[
K=(\ker\tau)(\ker\sigma)=1\times V\times V \quad\text{and}\quad N=\left\{(1,v,v^{-1}): v\in V\right\}\leq K;
\]
note that $K,N\unlhd E$; the latter holds, since the $(a,1,1)$-conjugate of
$(1,v,v^{-1})$ is $(1,v^a,(v^a)^{-1})$. Furthermore $K/N\cong V$ as
$H$-modules. Now consider the natural homomorphism $\nu\colon E\to E/N$;
note that every element in $E/N$ has the form $(a,v,1)N$, and $\nu$ maps
$(a,v,w)$ to $(a,vw,1)N$. In particular,  the multiplication in $E/N$ is
\[
(a,v,1)N\cdot(b,w,1)N = (ab,v^bw\beta(a,b),\gamma(a,b))N=(ab,v^bw\beta(a,b)\gamma(a,b),1)N,
\]
which proves that $(a,v,1)N\to (a,v)$ defines an isomorphism $E/N\cong
E_\zeta$ where $\zeta=\beta+\gamma$. By abuse of notation, we consider the
epimorphism $\nu\colon E\to E_\zeta$, $(a,v,w)\mapsto (a,vw)$. Since the
homomorphism $\tau\times\nu\colon E\to E_\beta\times E_\zeta$ is injective,
the claim follows. 

\item This follows by an iterative application of a).

\item
Write
$A=\ker\pi$ and let $A\leq B\leq D$ such that $D/A\cong E_\beta$ and $B/A$ is the kernel of $\varepsilon_\beta\colon E_\beta\to H$, so $D/B\cong H$. As done in a), we identify $E$ with the Cartesian product
$H\times B\times V$ and note that
  \begin{eqnarray*}
    \tilde D&=&\{(h,b,1): h\in H,b\in B\}\cong D\quad\text{and}\\
    \tilde E_\gamma&=& \{(h,1,v): h\in H,v\in V\}\cong E_\gamma,
  \end{eqnarray*}
with corresponding natural projections  $\pi_{\tilde D}\colon E\to \tilde
D$, $(h,b,v)\mapsto (h,b,1)$, and $\pi_{\tilde E_\gamma}\colon E\to\tilde
E_\gamma$, $(h,b,v)\mapsto (h,1,v)$.   Note that  $L=\{(1,a,1): a\in A\}$ is
normal in $\tilde D$, and $\tilde D/L\cong E_\beta$. In particular, $L\unlhd
E$, and $E/L$ is isomorphic to the subdirect product of $E_\beta$ and
$E_\gamma$ defined by identifying the common quotient $H$. By b), there
exists  $N/L\unlhd E/L$ such that $(E/L)/(N/L)\cong
E/N\cong E_{\zeta}$ and such that $E/L$ is isomorphic to the subdirect
product of $E_\zeta$ and $E_\beta$ defined by identifying the common
quotient $H$. Let $\pi_N\colon E\to E/N$ be the natural projection. It also
follows from b) that  $\ker\pi_N = N$ and $\ker\pi_{\tilde D}= \{(1,1,v):
v\in V\}$ intersect trivially,  so  $\pi_N\times\pi_{\tilde D}\colon E\to E/N\times
\tilde D$ is injective. Since $E/N\cong E_\zeta$ and $\tilde D\cong D$, the
claim follows. 
\end{iprf}
\end{proof}

We can now  formulate the main result of this section:
 
\begin{theorem}
\label{thmhomogext}
Let $V$ be a simple $\Z_p H$-module and let $E$ be an extension of $H$ with a $V$-homogeneous module $K$. Let $L\unlhd E$ be the split kernel of $E$ (Definition \ref{defSK}). Then $S=E/L$ is a split extension of $H$ with a $V$-homogeneous module, and there exist an $n\in\mathbb{N}$ and  $\gamma_1,\ldots,\gamma_n\in Z^2(H,V)$ such that the cohomology classes in $H^2(H,V)$ induced by the $\gamma_i$ are all linearly independent and such that $E$ is the subdirect product of $S$ with $E_{\gamma_1},\ldots,E_{\gamma_n}$ (defined by identifying the common factor $H$).
\end{theorem}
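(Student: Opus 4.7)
The plan is to split the proof into two main moves: first identify $S=E/L$ as the split part, then decompose the non-split complement using the cohomology of $H$ with coefficients in $V$. The first move is immediate. $K/L$ is $V$-homogeneous because quotients of semisimple modules are semisimple with composition factors among the originals; and $S$ being split over $K/L$ is the defining property of the split kernel $L$.

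For the main move, since $K$ is semisimple I would fix a submodule $K_1\le K$ with $K=L\oplus K_1$, and set $E':=E/K_1$. The natural projections $E\to S$ and $E\to E'$ have kernels $L$ and $K_1$ intersecting trivially and commuting with the projections onto $H$, so an order count gives $E\cong S\times_H E'$. Moreover, the proof of Lemma~\ref{lemsubspl}b) shows $L=\bigcap\{N\le K:E/N\text{ split}\}$, so $E/N$ is split iff $L\le N$; applied to $N=K_1\oplus L_0$ for $L_0\le L$, together with $L\cap K_1=0$, this forces $L_0=L$. Hence $E'/L_0$ is non-split for every proper submodule $L_0\lneq L$.

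Next, let $\varepsilon\in Z^2(H,L)$ represent $E'$ and consider the $\Z_p$-linear map
\[
\Psi\colon\mathrm{Hom}_{\Z_p H}(L,V)\to H^2(H,V),\qquad f\mapsto[f\circ\varepsilon].
\]
For $f\ne 0$, simplicity of $V$ makes $f$ surjective with $L/\ker f\cong V$, and $\Psi(f)$ represents $E'/\ker f$ as an extension of $H$ by $V$; if $\Psi(f)=0$ then this extension splits, contradicting the non-splitting property above (applied to $L_0=\ker f\lneq L$). Thus $\Psi$ is injective. Decomposing $L=V_1\oplus\cdots\oplus V_n$ with each $V_i\cong V$, taking coordinate projections $p_i\colon L\to V_i\cong V$, and setting $\gamma_i:=p_i\circ\varepsilon\in Z^2(H,V)$, one obtains classes $[\gamma_i]=\Psi(p_i)$; these are $\Z_p$-linearly independent, since $\sum c_ip_i=0$ with $c_i\in\Z_p$ forces each $c_i=0$ (evaluate at a nonzero element of $V_i$), and then injectivity of $\Psi$ finishes. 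Since $\bigcap_i\ker p_i=0$, the product of the natural projections $E\to S\times\prod_i E/(K_1\oplus\ker p_i)$ is injective, and an order count ($|E|=|H|\cdot|K/L|\cdot|V|^n$) matches the fibre product, yielding $E\cong S\times_H E_{\gamma_1}\times_H\cdots\times_H E_{\gamma_n}$, as required.

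The hard part will be establishing injectivity of $\Psi$, which translates the abstract minimality of $L$ as split kernel into concrete linear independence of cocycles. A naive strategy using Lemma~\ref{lemSDP} to eliminate linearly dependent cocycles from an arbitrary fibre decomposition of $E$ would replace the dependent factors by split extensions $H\ltimes V$ that cannot be cleanly absorbed into $S$; isolating the canonical non-split piece $E'$ via a complement $K_1$ of $L$ in $K$, and exploiting the defining property of $L$, sidesteps this issue.
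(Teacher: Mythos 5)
Your proof is correct, and it takes a genuinely different route from the paper's. The paper stays entirely inside the subdirect-product formalism: since $K$ is $V$-homogeneous, $E$ is a subdirect product of $S$ with finitely many extensions $E_\beta$, $\beta\in Z^2(H,V)$; one picks a \emph{minimal} family $\gamma_1,\ldots,\gamma_n$ with this property and proves independence of the classes by contradiction, using Lemma~\ref{lemSDP} to trade a dependent cocycle for a coboundary $\sigma$, whose split extension $E_\sigma$ is a quotient of $S$ by the final claim of Lemma~\ref{lemsubspl}, so that factor can be discarded, contradicting minimality. You instead choose a module complement $K_1$ of $L$ in $K$, encode $E'=E/K_1$ by a single cocycle $\varepsilon\in Z^2(H,L)$, and translate the defining minimality of the split kernel into injectivity of the linear map $f\mapsto[f\circ\varepsilon]$ on $\mathrm{Hom}_{\Z_p H}(L,V)$; coordinate projections of $L$ then produce the $\gamma_i$, and your kernel-intersection plus order count yields the full fibre-product statement explicitly. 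What each buys: the paper's exchange argument needs no choice of complement and is exactly the flexibility exploited in Remark~\ref{remProof} (replacing the $\gamma_i$ by any basis of their span), while your argument is more canonical — it identifies $n$ as the number of $V$-summands of $L$, derives all the $\gamma_i$ from one cocycle of $E/K_1$, and makes the isomorphism of $E$ with the fibre product completely transparent. One small correction to your closing remark: the ``naive'' elimination via Lemma~\ref{lemSDP} is precisely what the paper does, and the split factor \emph{can} be absorbed, because every quotient of $E$ that is a split extension of $H$ factors through $S$ (last assertion of Lemma~\ref{lemsubspl}); your route avoids this step but it is not actually blocked.
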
 
\begin{proof}
The statements about $S$ follow from Lemma~\ref{lemsubspl}. The kernel of the projection $S\to H$ is  $K/L$; the latter is $V$-homogeneous since it is a quotient of the $V$-homogeneous module $K$. The extension  $E$ can be considered as a subdirect product of extensions $E_{\beta}$, corresponding to cocycles
$\beta\in Z^2(H,V)$; let $\{\gamma_1,\ldots,\gamma_n\}$ be a minimal sub-multiset of $Z^2(H,V)$ such that $E$ is a subdirect product of $S$ with all those $E_{\gamma_i}$. We need to show that all the cohomology classes $\gamma_i+B^2(H,V)$ are linearly independent (which also shows that $\{\gamma_1,\ldots,\gamma_n\}$ is in fact a \emph{set} of size $n$). Assume the contrary, that is, without loss of generality  we can write
\[
\gamma_1=\sigma+\lambda_2\gamma_2+\ldots+\lambda_n\gamma_n
\]
for some $\lambda_i\in\Z_p$ and $\sigma\in B^2(H,V)$. An iterated application of
Lemma~\ref{lemSDP} (where $D$ is the subdirect product of $S$ with 
$E_{\gamma_2},\ldots,E_{\gamma_n}$  and $\gamma=\gamma_1$) shows that we can write $E$
as the subdirect product of $S$ with $E_{\sigma}$ and the $E_{\gamma_2},\ldots,E_{\gamma_n}$. Note that $E_\sigma$ is split, and Lemma~\ref{lemsubspl}c) shows that  the projection $E\to E_\sigma$ factors through $S$. We can therefore ignore $E_\sigma$ in the
construction and consider $E$ as the subdirect product of $S$ with
$E_{\gamma_2},\ldots,E_{\gamma_n}$, contradicting the minimality of $n$
\end{proof}

\begin{remark}\label{remProof}
In the proof of Theorem \ref{thmhomogext}, we could add redundant subdirect factors (stemming from linear
combinations of the $\gamma_i$) and, because we can choose which factor to eliminate, we can choose the cocycles $\gamma_1,\ldots,\gamma_n$ to correspond to an arbitrary basis of their span in $H^2(H,V)$.
\end{remark}

\section{Construction of the $(V,e)$-cover}\label{secExt2} 

\noindent Let $V$ be a simple $\Z_p H$-module. The results of the previous section show that the cover $\hat H_{V,e}$ is
a subdirect product of a split part with non-split extensions. Recall that $\hat H_{V,e}=\hat H_{p,e}/V(\calmhep)$ by Definition \ref{defHomCompNew}, and Theorem \ref{thmPhi} describes $\hat H_{p,e}$ using a wreath product construction with $|H|$ factors $\mathbb{Z}_p$. We explain in Proposition \ref{propT} that the split part of $\hat H_{V,e}$ is covered by the image of a homomorphism $\Psi_{V,e}$ on the free group $F$. However, the definition of $\Psi_{V,e}$ passes  through $\calmhep$, which again is infeasible in practice. In Section \ref{secCT} we therefore provide an alternative construction that only  uses $H$ and $V$; this construction is based on the fact that the split part of $\hat H_{V,e}$ is covered by $H\ltimes (R_{H,p}/V(R_{H,p}))^e$, and $R_{H,p}/V(R_{H,p})\cong V^{r}$ is a cyclic $H$-module.

\subsection{A wreath product construction for the split case}

We reconsider the epimorphism 
\[
\Psi_p=\mu_1\times\cdots\times \mu_e\colon F \to (H\ltimes R_{H,p})^e.
\]
of \eqref{eqPsip} where $R_{H,p}\cong \Z_p H\cong \Z_p^m$ is the regular module of $H$ in
characteristic $p$. 
The proof of Theorem \ref{thmPhi} has shown that
\[
\hat H_{p,e}=\Psi_p(F)\quad\text{and}\quad \calm=\calmhep=\Psi_p(M).
\]
Each $\mu_j\colon F\to H\ltimes R_{H,p}$ maps the generator $x_k\in X$ of $F$ to
$(\psi(x_k),0)$ if $k\ne j$, and to $(\psi(x_j); 1)$ if $k=j$; here
$1=1(\ee)$ is the unit vector supported at the identity of $H$.
This vector also is a generator of the cyclic $H$-module $R_{H,p}$.

By Definition \ref{defRhp}, we have $R_{H,p}=D_1^{r_t}\oplus\ldots\oplus D_t^{r_t}$ with each $D_j/\rad(D_j)$ simple, and there is a unique index $i$ such that  
\[
D_i/\rad(D_i)\cong V;
\]
we fix $i$ and set $r=r_i$. Let $V(R_{H,p})$ be as in
Definition \ref{defHomCompNew}; then $R_{H,p}/V(R_{H,p})\cong V^r$ is the largest
$V$-homogeneous quotient of $R_{H,p}$ and $r$ is the dimension of an
absolutely simple summand of $V$ over the algebraic closure of $\Z_p$. Factoring out $V(R_{H,p})$, we get
homomorphisms $\mu_{V,j}\colon F\to H\ltimes V^r$ mapping $x_k$ to
$(\psi(x_k),0)$ if $k\ne j$, and to $(\psi(x_j); 1+V(R_{H,p}))$ if $k=j$; here $1+V(R_{H,p})$
is a generator of the cyclic module \begin{eqnarray}\label{eqVr}V^r=R_{H,p}/V(R_{H,p}).
\end{eqnarray} These maps can be combined to
\[
\Psi_{V,e}=\mu_{V,1}\times\ldots\times\mu_{V,e}\colon F\to (H\ltimes
V^r)^e.
\]
By definition $\ker\Psi_p\le \ker\Psi_{V,e}$, which implicitly
defines an epimorphism from $\hat H_{p,e}$ to $\Psi_{V,e}(F)$. This
epimorphism factors through $\hat H_{V,e}$, since $\Psi_{V,e}(F)$ is by
construction an extension of $H$ with a $V$-homogeneous module. Recall from Definition \ref{defHomCompNew} that $\hat H_{V,e}=\hat H_{p,e}/V(\calm)$. If  $\eta$ denotes the natural projection $\hat H_{p,e}\to\hat H_{V,e}$, then we get the following commutative diagram of successive projections

\begin{equation*}
\begin{tikzcd}
  F\arrow{r}{\Psi_p}\arrow[bend right=20,swap]{rrr}{\Psi_{V,e}} & \hat H_{p,e}\arrow{r}{\eta} & \hat H_{V,e}\arrow{r}{\theta} & \Psi_{V,e}(F)\arrow{r}{\pi} & H. 
\end{tikzcd}
\end{equation*}
We now prove that $\Psi_{V,e}(F)$ exhibits the split part of $\hat H_{V,e}$:
 
\begin{proposition}\label{propT}
Every $e$-generated split extension of $H$ with a $V$-homogeneous module is a quotient of $\Psi_{V,e}(F)$.
\end{proposition}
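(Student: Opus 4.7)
The plan is to construct, for any $e$-generated split extension $E = H \ltimes U$ with $U$ a $V$-homogeneous $\Z_p H$-module, an explicit epimorphism $\Psi_{V,e}(F) \to E$. First I would apply Lemma~\ref{lemMAN} to choose generators $k_1, \ldots, k_e$ of $E$ of the form $k_i = (\psi(x_i), u_i)$ with $u_i \in U$; this defines a surjective homomorphism $\sigma\colon F \to E$ by $\sigma(x_i) = k_i$. The image $\Psi_{V,e}(F)$ lies inside the diagonal subgroup $H \ltimes (V^r)^e \subseteq (H \ltimes V^r)^e$, since all $H$-coordinates of $\Psi_{V,e}(w)$ coincide and equal $\psi(w)$; under this identification $\Psi_{V,e}(x_i) = (\psi(x_i), (0, \ldots, v_0, \ldots, 0))$ with $v_0 = 1 + V(R_{H,p})$ in the $i$-th slot.

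The core step is to build a group homomorphism $\tau\colon H \ltimes (V^r)^e \to E$ satisfying $\tau \circ \Psi_{V,e} = \sigma$, which will then restrict to the desired epimorphism on $\Psi_{V,e}(F)$. For each $i$, the cyclic submodule $\Z_p H \cdot u_i$ of $U$ is $V$-homogeneous: by semisimplicity, any submodule of a $V$-homogeneous module is itself semisimple with composition factors among those of the ambient module, hence is a direct sum of copies of $V$. Consequently, by the minimality in Definition~\ref{defHomCompNew}, the annihilator of $u_i$ in $R_{H,p}$ contains $V(R_{H,p})$, so the $H$-module map $R_{H,p} \to U$, $a \mapsto a u_i$, factors uniquely through an $H$-module map $\beta_i\colon V^r \to U$ with $\beta_i(v_0) = u_i$. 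I would then define
\[
\tau\colon H \ltimes (V^r)^e \to E, \qquad (h, (v_1, \ldots, v_e)) \mapsto \bigl(h,\, \beta_1(v_1) + \cdots + \beta_e(v_e)\bigr),
\]
and a short calculation using the $H$-linearity of each $\beta_i$ confirms this is a group homomorphism.

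A direct evaluation on generators gives $\tau(\Psi_{V,e}(x_i)) = (\psi(x_i), \beta_i(v_0)) = (\psi(x_i), u_i) = k_i = \sigma(x_i)$, so $\tau \circ \Psi_{V,e}$ and $\sigma$ agree on the free generators of $F$ and are therefore equal. Since $\sigma$ is surjective, so is $\tau|_{\Psi_{V,e}(F)}$, yielding the required epimorphism. The main subtlety I anticipate is the existence of each $\beta_i$, which rests on the fact that cyclic submodules of a $V$-homogeneous module are $V$-homogeneous together with a clean invocation of the minimality property of $V(R_{H,p})$ from Definition~\ref{defHomCompNew} to get $V(R_{H,p}) \subseteq \mathrm{Ann}_{R_{H,p}}(u_i)$.
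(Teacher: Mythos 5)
Your proof is correct, but it takes a genuinely different route from the paper. The paper deduces Proposition~\ref{propT} from Gasch\"utz's structure theorem (Theorem~\ref{thmMain}): it writes $\calmhep=\mathcal{A}\oplus\mathcal{B}$, notes that every $e$-generated split quotient of $\hat H_{p,e}$ factors through $\hat H_{p,e}/V(\mathcal{A})\mathcal{B}$, and then uses projectivity of $\mathcal{A}$ together with the fact that $\Z_p H$ is a symmetric algebra (so projective $=$ injective) to see that $\mathcal{A}$ is a direct summand of $(R_{H,p})^e$, whence $V(\mathcal{A})=V((R_{H,p})^e)\cap\mathcal{A}$ and the split part is exposed by $\Psi_{V,e}(F)$. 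You instead argue directly against an arbitrary $e$-generated split extension $E=H\ltimes U$: lift generators via Lemma~\ref{lemMAN} (the same Gasch\"utz lemma the paper uses for Theorem~\ref{thmCover}, not for this proposition), observe that each cyclic submodule $u_i\,\Z_p H\le U$ is $V$-homogeneous, so by the minimality in Definition~\ref{defHomCompNew} the map $R_{H,p}\to U$, $a\mapsto u_ia$, factors through $V^r=R_{H,p}/V(R_{H,p})$, and assemble the resulting maps $\beta_i$ into a homomorphism $H\ltimes(V^r)^e\to E$ agreeing with the lifted generator map on $\Psi_{V,e}(x_i)$. This is essentially a direct verification that $V^r$ is the universal cyclic $V$-homogeneous module and that $\Psi_{V,e}(F)$ is therefore universal among $e$-generated split extensions with $V$-homogeneous kernel; it is more elementary and self-contained (no projective/injective module theory, no appeal to Theorem~\ref{thmMain}), at the cost of not exhibiting the relationship between $\Psi_{V,e}(F)$ and the quotient $\hat H_{p,e}/V(\mathcal{A})\mathcal{B}$ of the $p$-cover, which is the structural picture the paper's argument provides. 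Two small points to make explicit if you write this up: the $\Z_p H$-module structure on $U$ is the conjugation action through the fixed projection $E\to H$ (this is the paper's standing convention), and the hypothesis that $E$ is $e$-generated is exactly what Lemma~\ref{lemMAN} needs, so it is genuinely used.
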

\begin{proof}
We use the notation introduced above the proposition. By Theorem~\ref{thmMain}, the representation module $\calm\leq \hat H_{p,e}$ is a direct sum 
$\calm=\mathcal{A}\oplus\mathcal{B}$ such that the $e$-generated
quotients of $\hat H_{p,e}$ which are split extensions of $H$ are exactly the quotients
that have $\mathcal{B}$ in the kernel. Thus, it remains to show that  $\hat H_{p,e}/V(\mathcal{A})\mathcal{B}$  is a quotient of $\Psi_{V,e}(F)$.  Recall from Remark \ref{remAB} that $\mathcal{A}$ is the direct sum of projective indecomposable modules
that are direct summands of the free module $R_{H,p}$; this implies that  $\mathcal{A}$ is projective itself, cf.~\cite[Definition~1.6.15]{luxpahlings}. By \cite[p.~18, Example~1.1.46]{luxpahlings}
the group algebra $\Z_p H\cong R_{H,p}$ is a symmetric algebra and for such algebras
every projective module is also
injective, see~\cite[Theorem~1.6.27(d)]{luxpahlings}. It follows
from $\mathcal{M}=\Psi_p(M)\leq (R_{H,p})^e$ that  $\mathcal{A}$ is  a submodule of $(R_{H,p})^e$, and therefore a direct summand by injectivity. This implies that
\[
V(\mathcal{A})=V((R_{H,p})^e)\cap\mathcal{A}.
\] 
This means that the image of the projection \[(H\ltimes R_{H,p})^e\to (H\ltimes R_{H,p}/V(R_{H,p}))^e=\Psi_{V,e}(F)\] exposes all the factors of $\mathcal{A}/V(\mathcal{A})$, which implies that  $\hat H_{p,e}/V(\mathcal{A})\mathcal{B}$  is a quotient of $\Psi_{V,e}(F)$.
\end{proof}


\subsection{A practical construction of $\Psi_{V,e}(F)$}\label{secCT}

The definition of $\Psi_{V,e}$ is on the free group $F$ and passes through $\calm$,
which we deemed infeasible in practice. We now provide an alternative,
synthetic,
description  that only uses $H$ and $V$. We denote the dimension of $V$ by $s$ and the multiplicity of $V$ in the radical factor $R_{H,p}/\rad(R_{H,p})=(D_1/\rad(D_1))^{r_1}\oplus\ldots\oplus (D_t/\rad(D_t))^{r_t}$ of $R_{H,p}$ by $r$. By Theorem~\ref{structureKH}, this
multiplicity is the dimension of an absolutely simple constituent $U$ of $V$, and $r$ divides $s$. 
 
As seen in \eqref{eqVr}, the $H$-module $V^r$ is isomorphic to a quotient of the
cyclic module $R_{H,p}$, so $V^r$ is cyclic as well. Suppose we have a
cyclic generator $z\in V^r$, then one can define  $H\ltimes V^r$ and
homomorphisms
\[ 
\psi_j'\colon F\to H\ltimes V^r
\] 
that map the generator $x_j\in X$ of $F$ to $(\psi(x_j),z)$ and $x_k\ne x_j$ to $(\psi(x_k),0)$.
It follows that, up to automorphisms, 
\begin{eqnarray}\label{eqPsiVe}
\Psi_{V,e}=\psi_1'\times\ldots\times \psi_e'\colon F\to (H\ltimes V^r)^e.
\end{eqnarray}
Thus, all that remains is to find a cyclic generator of $V^r$; we now describe how to do that.

Recall that here we have the field $\mathbb{F}=\mathbb{Z}_p$. As in
Theorem~\ref{structureKH}, let $\underline{\mathbb{F}}$ be a splitting field for $\mathbb{F}H$
and let $U$ be an absolutely simple $\underline{\mathbb{F}}H$-module that is a direct
summand of $\underline{\mathbb{F}}V$. We obtain $U$ from $V$ using MeatAxe~\cite{holtmeataxe} methods;
this also determines the value of $r$.

Let $\nu\colon\underline{\mathbb{F}}V\to U$ be the
natural projection onto that summand. We choose vectors $w_1,\ldots,w_r\in
V$ such that their images $\nu(w_1),\ldots,\nu(w_r)$ form an
$\underline{\mathbb{F}}$-basis of $U$. Since the images of the standard $\mathbb{F}$-basis of
$V$ span $U$ as an $\underline{\mathbb{F}}$-vector space, we can take $\{w_1,\ldots,w_r\}$
as a subset of such a standard basis. Since $U$ is absolutely simple, it
follows from \cite[Corollary~1.3.7]{luxpahlings} that $\underline{\mathbb{F}}H$ acts
as a full matrix algebra on $U$. This means that we can find elements
$a_i\in\underline{\mathbb{F}}H$ such that $\nu(w_i)^{a_j}=\delta_{i,j}\nu(w_i)$ for all $i,j$,
where $\delta_{i,j}$ is the Kronecker-delta. We now consider $U^r$ as a
quotient of $(\underline{\mathbb{F}}V)^r$ and let $\underline{\mathbb{F}}H$ act diagonally.
For $w\in U$, denote by $[w]_i$ the vector $w$ in the $i$-th component of
$U^r$, and define
\[
z=[\nu(w_1)]_1+[\nu(w_2)]_2+\cdots+[\nu(w_r)]_r\in U^r.
\]
By construction, each $z^{a_i}=[\nu(w_i)]_i$. Since $U$ is
simple, each  $\nu(w_i)$ generates $U$ as $\underline{\mathbb{F}}H$-module; this shows that  $z$ generates $U^r$ as $\underline{\mathbb{F}}H$-module. Since $V$ is a simple $\mathbb{F}H$-module, this implies that the pre-image
$[w_1]_1+\cdots+[w_r]_r\in V^r$ of $z$ generates $V^r$ as $\mathbb{F}H$-module. We have therefore found a cyclic generator.

\subsection{A practical construction of $\hat H_{V,e}$}\label{secHve}

We combine the results of Theorem~\ref{thmhomogext} with the
construction in Section~\ref{secCT} and get the following construction of the epimorphism $\eta\circ\Psi_p\colon F\to \hat H_{V,e}$:

\begin{theorem}\label{thmHve}
Let $V$ be a simple $\Z_p H$-module. Let $F$ be the free group on
$\{x_1,\ldots,x_e\}$ with associated epimorphism $\psi\colon F\to H$. Let $\gamma_1,\ldots,\gamma_d\in Z^2(H,V)$ such that their images in $H^2(H,V)$
form a basis. For each $i$, let $E_i=E_{\gamma_i}$ with projections $\varepsilon_i\colon E_i\to H$, and let $\varrho_i\colon F\to E_i$ be defined by $\varrho_i(x_k)=(x_k,1)\in E_i$ for all $k$, that is, $\varepsilon_i(\varrho_i(x_k))=\psi(x_k)$. If we define
\[
\rho=\Psi_{V,e}\times\varrho_1\times\cdots\times\varrho_d
\colon F\to (H\ltimes V^r)^e\times E_1\times\cdots\times E_d
\]
with $\Psi_{V,e}$ as in \eqref{eqPsiVe}, then $\ker\rho=\ker(\eta\circ\Psi_p)$ and  $\rho(F)\cong \hat H_{V,e}$.
\end{theorem}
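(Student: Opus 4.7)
The plan is to establish the identity $\ker\rho=\ker(\eta\circ\Psi_p)$ by proving the two inclusions separately, each using the universal property of $\hat H_{V,e}$ (Definition \ref{defHomCompNew}) as the largest $e$-generated extension of $H$ with $V$-homogeneous kernel. Since $\hat H_{V,e} = F/\ker(\eta\circ\Psi_p)$ and $\rho(F) = F/\ker\rho$, proving the two inclusions amounts to showing that each of $\rho(F)$ and $\hat H_{V,e}$ is a quotient of the other.

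First I would verify that $\rho(F)$ is itself an $e$-generated extension of $H$ with $V$-homogeneous kernel; this gives $\ker(\eta\circ\Psi_p)\subseteq\ker\rho$ for free. Being a homomorphic image of $F$, the group $\rho(F)$ is $e$-generated. All target factors come equipped with compatible projections onto $H$ (namely $\pi$ on $(H\ltimes V^r)^e$ and the $\varepsilon_i$ on the $E_i$), and by construction each $\psi'_j$, $\Psi_{V,e}$, and $\varrho_i$ lifts $\psi\colon F\to H$; hence these projections assemble into an epimorphism $\rho(F)\to H$. Its kernel $K$ embeds into the product of the kernels of the individual projections, which is a submodule of $(V^r)^e\oplus V^d$, so $K$ is $V$-homogeneous.

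For the reverse inclusion I would apply Theorem \ref{thmhomogext} to $E=\hat H_{V,e}$: it decomposes $\hat H_{V,e}$ as a subdirect product (identified along the common factor $H$) of its split quotient $S$ with non-split extensions $E_{\beta_1},\ldots,E_{\beta_n}$ whose cocycle classes are linearly independent in $H^2(H,V)$. By Remark \ref{remProof}, any basis of the span of these classes in $H^2(H,V)$ can be used in place of the $\beta_j$; since $\{\gamma_1+B^2,\ldots,\gamma_d+B^2\}$ is a basis of all of $H^2(H,V)$, each $E_{\beta_j}$ can be replaced by a quotient of a suitable subdirect product of the $E_{\gamma_i}$ via iterated application of Lemma \ref{lemSDP}(b). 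The split part $S$ is a quotient of $\Psi_{V,e}(F)$ by Proposition \ref{propT}. Combining these (and using Lemma \ref{lemsubspl} to control the subdirect assembly along the common $H$-quotient) realizes $\hat H_{V,e}$ as a quotient of $\rho(F)$, yielding $\ker\rho\subseteq\ker(\eta\circ\Psi_p)$.

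The main obstacle is the bookkeeping in the second step: one must ensure that the cocycle manipulations of Lemma \ref{lemSDP} interact correctly with the subdirect factor $S$, so that the ``common $H$'' identifying the subdirect factors of $\rho(F)$ is precisely the one coming from $\psi\colon F\to H$ (via $\varepsilon_i\circ\varrho_i=\psi$ and $\pi\circ\Psi_{V,e}=\psi$). This compatibility is what makes the subdirect product $\rho(F)$ the correct target, and it is what allows the split and non-split pieces to be assembled into a single $e$-generated extension of $H$ rather than a larger direct product. Once this is verified, both inclusions are in hand and $\rho(F)\cong\hat H_{V,e}$ follows.
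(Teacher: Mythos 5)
Your proposal is correct and takes essentially the same route as the paper's proof: first showing that $\rho(F)$ is an $e$-generated extension of $H$ with a $V$-homogeneous kernel (hence a quotient of $\hat H_{V,e}$, giving $\ker(\eta\circ\Psi_p)\subseteq\ker\rho$), and then using Theorem~\ref{thmhomogext} with Remark~\ref{remProof}, Proposition~\ref{propT} and the cocycle arithmetic of Lemma~\ref{lemSDP} to see that a basis of all of $H^2(H,V)$ suffices to expose $\hat H_{V,e}$ as a quotient of $\rho(F)$. The only point the paper spells out that you leave implicit is the surjectivity of each $\varrho_i$, which follows because its image covers $H$ and, since $E_i$ is non-split and $V$ is simple, cannot be a complement to $V$.
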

\begin{proof}
Since $F$ is free, each $\varrho_i$ is a homomorphism whose image covers
all of $E_i/V\cong H$. Since each $E_i$ is non-split and $V$ is simple, each $\varrho_i$ is surjective. The image of $\rho$ therefore is an extension of $H$ with a $V$-homogeneous
module, and therefore it is a quotient of $\hat H_{V,e}$. On the other hand, Theorem~\ref{thmhomogext} and Remark \ref{remProof} show that $\hat H_{V,e}$ is a
subdirect product of a split extension (which is $\Psi_{V,e}(F)$ by Proposition~\ref{propT}), with extensions corresponding to a basis of a subspace of 
$H^2(H,V)$. A basis of all $H^2(H,V)$ will suffice, which shows that
$\rho$ exposes all of $\hat H_{V,e}$. 
\end{proof}

The last ingredient that is required in order to construct $\hat H_{V,e}$ in practice is to be able to calculate $H^2(H,V)$ and to construct the extension associated to a particular
cocycle. A method for this has been given in~\cite{holtcohom}. Here we use an 
alternative approach, utilizing confluent rewriting systems; we will describe this method and its advantages in Section~\ref{secComp}.

\section{Quotient algorithm: lifting epimorphisms}\label{secLiftEpi}

\noindent As an application of the results established so far, we describe a
quotient algorithm that does not require the initial factor group to be
solvable. We assume that $G=F/R$ is a finitely presented group and that an
epimorphism $\varphi\colon G\to H$ onto a finite group is given.
(Section~\ref{secInitialQuot} below gives a sketch how such a homomorphism
$\varphi$ could be found.) We assume
that we can determine a confluent rewriting system for $H$, as well as a
faithful representation in characteristic $p$, or a permutation
representation. This is a reasonable assumption, because if we cannot
compute with $H$, then it seems unlikely that  $\varphi$ can be used to
deduce information about $G$.

Our goal is to extend $\varphi$ to an epimorphism $\tau\colon G\to \tilde H$
such that $\tilde H$ is an extension of $H$ with a semisimple $\Z_pH$-module and such that $\tau$ factors through $\varphi$. As discussed in 
the introduction, imposing the requirement of semisimplicity is not a
restriction, because any such extension with a solvable normal subgroup can
be built as an iterated extension with semisimple modules.

We first classify the irreducible $\Z_pH$-modules. Following~\cite[Section~7.5.5]{handbook}, we do so by starting with 
the composition factors of
a faithful $\Z_p$-representation of $H$ and then iteratively computing
composition factors of tensor products until no new factors arise; see also \cite{ples87} for a description for solvable $H$.

When lifting epimorphisms for a second time,
we do not need to recompute modules, as long as we work with the
same prime, as any normal subgroup of $p$-power order lies in the kernel of
any irreducible representation in characteristic $p$.
(The latter follows
because the set of fixed points of the normal $p$-subgroup is a non-trivial
submodule.) Since
semisimple modules are the direct sum of homogeneous modules, we
now iterate over the simple modules, and for each such module $V$, we construct
the group $\tilde H_V$ that is the largest extension of $H$ that is a quotient of $G$
and whose projection onto $H$ has a $V$-homogeneous kernel. As a quotient of $G$, this group $\tilde H_V$ will also be a quotient of $F$ and therefore a quotient of $\hat H_{V,e}$. Indeed, because $G$ is defined as a quotient of $F$ by a relator set $R$, we obtain $\tilde H_V$ (and the associated epimorphism)
as a factor of $\hat H_{V,e}$ by the normal closure of the
relators $R$ evaluated in the generators of $\hat H_{V,e}$.
The cover $\tilde H$ (and the epimorphism on $H$) then
will be the subdirect product of all these extensions. 

\subsection{Finding the initial homomorphism}
\label{secInitialQuot}

While it is not the main subject of this paper, we briefly sketch  how one can find candidates for the initial epimorphism
$\varphi\colon G\to H$.  Since our algorithm constructs extensions with
solvable groups, it is sufficient for $H$ to be Fitting-free.
Thus $H$ embeds in the automorphism group of its socle, and therefore is a
subdirect product of groups $Q$ satisfying   $T^n\le Q\le\Aut(T^n)\cong\Aut(T)\wr {\rm Sym}_n$ for some finite simple group $T$ and integer $n>0$. Given a choice of $n$ and $T$
(respectively, using the classification of finite simple groups, a choice of
$n$ and $\sz{T}$) we can find all such quotients, albeit at a cost that is
exponential in $n$ and $\sz{T}$. This will provide a choice of candidates
for epimorphisms $\varphi\colon G\to H$ to seed our algorithm with:

Using the low-index algorithm~\cite[\S5.6]{simsbook}, we first search for
subgroups of $G$ of index up to $n$. For each such subgroup $S$, we search
for homomorphisms $\tau\colon S\to\Aut(T)$ such that $T\le\tau(S)$;  the representation of $G$ induced by $\tau$ then exposes the desired quotient
$Q$, see \cite{hulpke01}.  By the proof of Schreier's Conjecture, $\Aut(T)/T$ is
solvable of derived length at most $3$. Thus the third (or less, depending on
$T$) derived subgroup of $S$ maps  onto $T$. It therefore remains to find such epimorphisms $\tau$ from (derived subgroups of) $S$ onto
$T$, respectively onto almost simple groups with socle $T$. A basic way of
doing this is a generic epimorphism search such as described in~\cite[Section~9.1.1]{handbook}.
For $T$ being a classical group with particular parameters, there are
algorithms that find epimorphisms, utilizing the underlying geometry, see \cite{jambor,jamborphd,ples09,bridsonetal19}.

\section{Computing Cohomology via rewriting systems}\label{secComp}
\noindent  To make the construction of $\hat H_{V,e}$ in Theorem \ref{thmHve}
concrete and effective, we need to be able to calculate 2-cohomology groups
and extensions for the given finite quotient $H$. 
A general method for this task has been described in~\cite{holtcohom}, which
finds a cohomology group as a subgroup of the cohomology for a
Sylow $p$-subgroup of $H$ (here $p$ is  the characteristic of $V$), and returns non-split
extensions through presentations. We introduce a different approach that assumes a confluent rewriting system
for the group $H$, but also returns a confluent rewriting system for the
resulting extensions, making it easier to find the structure of subgroups
given by generators. The method we shall employ is a natural generalisation of the method
used in the polycyclic case~\cite[Section~8.7.2]{handbook},
and already arises implicitly in~\cite{ples87}, in~\cite{holt89}, in
Groves~\cite{groves97}, as well as in \cite{schm08}. A brief description is
also given in an (unpublished) manuscript of Stein~\cite{stein}.
We describe this method in detail here, as we were not able to find a complete and rigorous
treatment in the literature.

In this section, as before, let $H$ be a finite group with $e$
generators\footnote{We use the same variable $e$ here,
although it is not necessary to use the same generating set as in the
quotient algorithm. The choice (and number) of
generators used for the cohomology calculation does not
need to agree with the
images of free generators used for the construction of $\hat H_{V,e}$; it is sufficient that we can translate between different generating systems.}
$\{h_1,\ldots,h_e\}$.
We shall also assume that we have rules for a confluent rewriting system for $H$ in these generators; see \cite[Chapter~12]{handbook} and \cite[Section~2.5]{simsbook} for
details on rewriting systems.  Such a rewriting system can be
composed from rewriting systems for the simple composition factors of $H$;
for non-abelian simple groups it can be found by using subgroups forming a
BN-pair (or similar structures)~\cite{schm11}.
Such a  rewriting system allows us to compute normal forms of elements in
$H$, given as words in the generators. In the following, $V$ is a $d$-dimensional $\Z_p H$-module
with $\Z_p$-basis $\{v_1,\ldots,v_d\}$.

\subsection{Extending the rewriting system} Starting with a confluent rewriting system for $H$ and the $\Z_pH$-module $V$, we explain how extensions of $H$ with $V$ can be described by  \emph{extending}  the original rewriting systems. This will lead to a method for computing  $H^2(H,V)$ via solving homogeneous linear equation systems. We first consider the quotient $H$, then the module $V$, and then the extensions.

\smallskip

\noindent {\bf The group  $H$.} By introducing formal inverses, $H$ can be considered as a monoid with
$2e$ monoid generators $\{h_1^{\pm 1},\ldots,h_e^{\pm 1}\}$.  The latter is a quotient of the free monoid $A$ on $\gesy{a}=\{a_1^{\pm
1},\ldots,a_e^{\pm 1}\}$, with natural epimorphism $\alpha\colon A\to H$ defined by $a_i^{\pm 1}\mapsto
h_i^{\pm 1}$. Note that $a_i^{-1}$ is a
formal symbol, while $h_i^{-1}$ is the inverse of $h_i$. Using a
Knuth-Bendix procedure \cite[Section~2.5]{simsbook}, we assume that we have
a  confluent rewriting system (with respect to a reduction order $\prec_a$)
for the monoid $H$ on this generating set. This rewriting system consists of
a set of rules $\calR_H$ each of the form $l\to r$
for certain words $l$ and $r$ in the generators of $A$, such that $r\prec_a
l$. Since we introduced extra generators to represent inverses, we assume that
$\calR_H$ contains rules that reflect this mutual inverse relation and  that
become trivial (or redundant) when considering the relations as group
relations: these are the rules of the form $a_ia_i^{-1}\to\emptyset$ and
$a_i^{-1}a_i\to\emptyset$, which we collect in a subset
$\overline\calR_H\subset\calR_H$; here $\emptyset$ denotes the empty word.
We note that this assumption holds automatically if $\prec_a$ is based on
length and all generators have order $2$. If the order of a generator $h_i$ is 2, then these rules will change shape: Without loss of generality, after possibly
switching $a_i$ and $a_i^{-1}$, the inversion rule becomes $a_i^{-1}\to a_i$, which we collect  in $\overline\calR_H$. The rule $a_i^2\to\emptyset$ (which must exist,
since otherwise $a_i^2$ cannot be reduced) however will not be part of
$\overline\calR_H$. We now set $\widetilde\calR_H=\calR_H-\overline\calR_H$,
so that our rules are partitioned as
\[
\calR_H=\overline\calR_H\cup\widetilde\calR_H.
\]

\noindent {\bf The module $V$.} We write the elements of the $\Z_p H$-module $V$ multiplicatively
as $\gesy{v}^{\gesy{e}}=v_1^{e_i}\cdots v_d^{e_d}$ with
$\gesy{e}=(e_1,\ldots e_d)\in\Z_p^e$. Let $\tau\colon H\to\Aut_{\Z_p}(V)$
describe the $\Z_p H$-action on $V$. Correspondingly, we choose an alphabet of
$d$ generators $\gesy{b}=(b_1,\ldots,b_d)$, and consider the set of rules
\begin{equation}
\calR_V=\{
b_i^p\to\emptyset,\;\;b_jb_j\to b_ib_j : i\in\{1,\ldots,d\}, j>i\}.
\label{defrb}
\end{equation}
These
rules form a reduced confluent rewriting system with respect to the ordering
$\prec_b$, which  is the iterated wreath product ordering of length-lex
orderings on words in a single symbol $b_i$. They define a normal form
$\gesy{b}^{\gesy{e}}=b_1^{e_1}b_2^{e_2}\cdots b_d^{e_d}$ with
$\gesy{e}\in\Z_p^d$. The set $\calR_V$ therefore describes a monoid  isomorphic
to $V$ via  $b_i\to v_i$. 

\smallskip

\noindent {\bf Extensions of $H$ with $V$.} We now take the combined alphabet 
$\calA=\{a_1^{\pm 1},\ldots,a_e^{\pm 1}\} \cup\{b_1,\ldots,b_d\}$ and denote by $\prec$ the 
wreath product ordering $\prec_b\wr\prec_a$, see \cite[p.\ 46]{simsbook}. We define  $\calR_M$ to be the set of all rules
\[
\calR_M=\{ b_ja_i^{\sigma}\to a_i^{\sigma}
\gesy{b}^{(f_{i,j,\sigma,1},\ldots,f_{i,j,\sigma,d})} : \sigma\in\{\pm1 \}, i\in\{1,\ldots,e\}, j\in\{1,\ldots,d\}\}
\]
where the exponents $f_{i,j,\sigma,k}$ are defined by
$v_j^{\tau(a_i^\sigma)}=\gesy{v}^{(f_{i,j,\sigma,1},\ldots,f_{i,j,\sigma,d})}$.

If $\widetilde\calR_H$ has $r$ rules, then corresponding to those  we define an ordered set of indeterminates over $\Z_p$, namely
\[
\gesy{x}=(x_{1,1},\ldots,x_{1,d},\; x_{2,1},\ldots,x_{2,d},\;\ldots,\; x_{r,1},\ldots,x_{r,d}),
\]
and define a set of new rules $\calR_H(\gesy{x})$ that consists of the rules in  $\widetilde\calR_H$ modified by a co-factor (or \emph{tail}), which is an element of $V$ given as a word in $\gesy{b}$ that is parameterized by the values
of the variables $\gesy{x}$:
\begin{equation}
\calR_H(\gesy{x})=\{ l_i\to r_i \gesy{b}^{(x_{i,1},\ldots,x_{i,d})} : (l_i\to r_i)\in \widetilde\calR_H\}.
\label{defra}
\end{equation} 
Lastly, we set
\[
\calR=\calR(\gesy{x})=\calR_H(\gesy{x})\cup\calR_V\cup\calR_M\cup\overline\calR_H.
\] 
In conclusion: the rules in $\calR_H(\gesy{x})$ (together with  $\overline \calR_H$) are the original rules of the rewriting system of the quotient $H$, with appended parametrised tails; the rules in $\calR_V$ encode the group structure of $V$, and the rules in $\calR_M$ encode its $H$-module structure. By the definition of the wreath product ordering, for all rules in $\calR$ we
have that the left hand side is larger than the right hand side; thus $\calR$ 
is a rewriting system. Since $\calR_V$ is always reduced, it follows that  $\calR$ is reduced if $\calR_H$ is.
 
We  aim to find conditions on the variables $\gesy{x}$ that make $\calR(\gesy{x})$ confluent, and first observe that in this case the
rewriting system describes a group extension as desired. We denote any particular
assignment of values to $\gesy{x}$ by $\gesy{y}\in\Z_p^{dr}$.

\begin{lemma}
For any $\gesy{y}\in\Z_p^{dr}$, the monoid presentation
$\langle\calA\mid\calR(\gesy{y})\rangle$ defines a group.
\end{lemma}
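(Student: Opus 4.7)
The plan is to show that every generator in $\calA$ admits a two-sided inverse in the monoid $E=\langle\calA\mid\calR(\gesy{y})\rangle$. Since the inverse of a product is the reverse product of inverses of the factors, it will follow that every element of $E$ is invertible, so $E$ is a group.

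For the generators $b_1,\ldots,b_d$, the rules $\calR_V$ alone realise the submonoid $B=\langle b_1,\ldots,b_d\rangle\subseteq E$ as an image of $\Z_p^d$: indeed, $b_j^p=\emptyset$ and $b_jb_i=b_ib_j$ for $j>i$. Hence each $b_j$ has two-sided inverse $b_j^{p-1}$ in $E$, and more generally every element $\gesy{b}^{\gesy{e}}\in B$ has inverse $\gesy{b}^{-\gesy{e}}$.

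For the generators $a_i$, I will distinguish two cases. First, if the inversion rules for $a_i$ in $\overline{\calR}_H$ are $a_ia_i^{-1}\to\emptyset$ and $a_i^{-1}a_i\to\emptyset$, then the formal generator $a_i^{-1}\in\calA$ is itself a two-sided inverse of $a_i$ in $E$. Otherwise $h_i$ has order $2$ and $\overline{\calR}_H$ contains $a_i^{-1}\to a_i$; since the normal form of $a_i^2$ must then be computed by some rule in $\widetilde{\calR}_H$, and $h_i^2=\ee$ in $H$, that rule is $a_i^2\to\emptyset$, which after adding a tail becomes $a_i^2\to\gesy{b}^{\gesy{c}}$ in $\calR_H(\gesy{y})$ for some $\gesy{c}\in\Z_p^d$ determined by $\gesy{y}$. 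A direct computation using the previous paragraph then yields
\[
a_i\cdot\bigl(a_i\gesy{b}^{-\gesy{c}}\bigr)=a_i^2\gesy{b}^{-\gesy{c}}=\gesy{b}^{\gesy{c}}\gesy{b}^{-\gesy{c}}=\emptyset\quad\text{and}\quad\bigl(\gesy{b}^{-\gesy{c}}a_i\bigr)\cdot a_i=\gesy{b}^{-\gesy{c}}a_i^2=\emptyset,
\]
so $a_i$ possesses a right inverse $v=a_i\gesy{b}^{-\gesy{c}}$ and a left inverse $u=\gesy{b}^{-\gesy{c}}a_i$ in $E$.

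The main subtlety is in reconciling these two inverses in the order-two case: the rules $\calR_M$ give $\gesy{b}^{-\gesy{c}}a_i=a_i\gesy{b}^{-\gesy{c}\cdot h_i}$, and there is no a priori reason that $\gesy{c}$ be $h_i$-invariant. This is resolved by the universal monoid identity $u=u\,(av)=(ua)\,v=v$, which holds in any monoid whenever an element $a$ has a left inverse $u$ and a right inverse $v$. Hence $u=v$ automatically in $E$ (the corresponding relation is imposed in the congruence defining $E$, possibly collapsing part of the image of $B$), producing a two-sided inverse of $a_i$. This completes the verification that every generator is invertible, and thus $E$ is a group.
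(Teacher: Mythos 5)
Your proof is correct and follows essentially the same route as the paper's: exhibit an inverse for each generator, with $b_j^{p-1}$ inverting $b_j$, the formal generator $a_i^{-1}$ inverting $a_i$ when $h_i$ has order $\neq 2$, and $a_i$ times the inverse of the tail word when $h_i$ has order $2$. The only difference is that you verify two-sidedness in the order-two case explicitly via the left/right-inverse identity $u=u(a_iv)=(ua_i)v=v$, a detail the paper leaves implicit.
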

\begin{proof}
It is sufficient to show that every generator has an inverse.  The
rules $b_i^p\to\emptyset$ in (\ref{defrb}) show that every generator $b_i$
has an inverse.  As $H$ is a group, $\calR_H$ must contain rules that allow
for free cancellation. If the order of $h_i$ is not 2, these rules must be
of the form $a_ia_i^{-1}\to\emptyset$ and  $a_i^{-1}a_i\to\emptyset$. These
rules imply that $a_i$ and $a_i^{-1}$ are mutual inverses and they must lie
in $\overline\calR_H\subseteq \calR(\gesy{y})$.  If the order of $h_i$ is
$2$, then there will be a rule $a_i^{-1}\to a_i$ in $\overline\calR_H$ (thus
the generator $a_i^{-1}$ is a redundant, duplicate, generator) and a rule
$a_i^2\to\emptyset$ in $\widetilde\calR_H$; this  last rule implies by
(\ref{defra}) the existence of a rule $(a_i^2\to w)\in\calR_H(\gesy{y})\subset
\calR(\gesy{y})$, with $w$ a word in the generators $\{b_1,\ldots,b_d\}$
only. Thus $w$ represents an invertible element, and $a_i w^{-1}$ will be an
inverse for $a_i$.
\end{proof}

Thus we can consider $\calR(\gesy{y})$ as relations of a group
presentation with abstract  generators
\[
\calA'=\{a_1,\ldots,a_e,b_1,\ldots,b_d\};
\]
note that some of  the relations might
become vacuously true in a group.  Since $H$ acts linearly on $V$, the set of values of $\gesy{x}$ that make the rewriting system confluent is a subspace of $\Z_p^{dr}$,  denoted by
\begin{eqnarray}\label{eqConf}
X=\{\gesy{y}\in \Z_p^{dr}: \calR(\gesy{y})\text{ confluent}\}.
\end{eqnarray}

\begin{lemma} 
\label{lemAlwaysExt}
If $\gesy{y}\in X$, then $\langle\calA'\mid\calR(\gesy{y})\rangle$
defines a group that is an extension of $H$ with $V$ where the conjugation
action of $H$ equals the module action.
\end{lemma}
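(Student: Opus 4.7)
Let me denote the group presented by $\langle\calA'\mid\calR(\gesy{y})\rangle$ by $G$. Since $\gesy{y}\in X$, the rewriting system $\calR(\gesy{y})$ is confluent with respect to $\prec$, so every element of $G$ has a unique $\calR(\gesy{y})$-irreducible representative. The plan is to identify these normal forms explicitly with elements of $H\times V$, exhibit a normal subgroup $N\cong V$ generated by the $b_i$'s with quotient $H$, and check that the induced action matches $\tau$.

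The first step is to show that the irreducible words are exactly of the form $w\,\gesy{b}^{\gesy{e}}$, where $w$ is an $\calR_H$-irreducible word in $\{a_1^{\pm1},\ldots,a_e^{\pm1}\}$ and $\gesy{e}\in\Z_p^d$. Indeed, if an irreducible word $u\in\calA^\ast$ contained some $b_j$ followed by an $a_i^\sigma$, then the rule in $\calR_M$ would apply; hence all $a$-letters precede all $b$-letters in $u$. The $a$-prefix cannot contain any left-hand side of $\calR_H(\gesy{x})$ or $\overline\calR_H$, so it is a normal form for $H$ under the original system; the $b$-suffix cannot contain a left-hand side of $\calR_V$, so it has the form $\gesy{b}^{\gesy{e}}$ with coordinates in $\{0,\ldots,p-1\}$. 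This gives a bijection between normal forms and $H\times V$ (as sets).

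Next, let $N=\langle b_1,\ldots,b_d\rangle\leq G$. The rules in $\calR_V$ show that every element of $N$ equals $\gesy{b}^{\gesy{e}}$ for some $\gesy{e}\in\Z_p^d$, and since the $p^d$ such words are pairwise distinct normal forms, $N$ is elementary abelian of order $p^d$ and isomorphic to $V$ via $b_i\mapsto v_i$. Normality of $N$ in $G$ follows from $\calR_M$: for each generator $a_i^\sigma\in\calA'$ and each $b_j$ we have
\[
a_i^{-\sigma}\,b_j\,a_i^{\sigma}= \gesy{b}^{(f_{i,j,\sigma,1},\ldots,f_{i,j,\sigma,d})}\in N,
\]
and by the definition of the $f_{i,j,\sigma,k}$ this conjugation action corresponds precisely to $\tau(h_i^\sigma)$ on $V$.

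Finally, consider the quotient $G/N$. It is generated by the images of $a_1,\ldots,a_e$; in this quotient every tail $\gesy{b}^{(x_{i,1},\ldots,x_{i,d})}$ becomes trivial, so the relations in $\calR_H(\gesy{y})\cup\overline\calR_H$ collapse to the original rewriting rules of $H$. Hence $G/N$ is a quotient of $H$. Combined with the normal-form count $|G|\leq|H|\cdot|V|$ from the first step, this forces $|G/N|=|H|$, so $G/N\cong H$ and $|G|=|H|\cdot p^d$. Thus $G$ is an extension of $H$ with $V$ inducing the prescribed action, as required. The only real obstacle is the normal-form analysis, but confluence of $\calR(\gesy{y})$ reduces this to the straightforward observation above that each rule family forbids a specific pattern.
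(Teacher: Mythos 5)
Your overall strategy is sound and essentially parallels the paper's own argument: confluence gives normal forms of the shape $w\,\gesy{b}^{\gesy{e}}$, the subgroup $N=\langle b_1,\ldots,b_d\rangle$ is identified as a normal copy of $V$ on which the generators act via $\calR_M$ exactly as $\tau$ prescribes, and the factor group is analysed by killing the $b_i$. Your first three steps are correct.

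The final step, however, contains a direction error. As written you only establish two \emph{upper} bounds: ``$G/N$ is a quotient of $H$'' gives $|G/N|\leq|H|$, and the normal-form count $|G|\leq|H|\cdot|V|$ together with $|N|=p^d$ gives again $|G/N|\leq|H|$. Two upper bounds cannot force $|G/N|=|H|$; you need a lower bound somewhere. Fortunately both repairs are available from material you already have. First option: observe that every word $w\,\gesy{b}^{\gesy{e}}$ with $w$ an $\calR_H$-normal form and $\gesy{e}\in\{0,\ldots,p-1\}^d$ is itself $\calR(\gesy{y})$-irreducible (no left-hand side of any of the four rule families occurs in such a word), so by confluence and termination these $|H|\cdot p^d$ words represent pairwise distinct elements of $G$; hence $|G|=|H|\cdot|V|$, whence $|G/N|=|H|$ and your epimorphism from $H$ onto $G/N$ is an isomorphism. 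Second option: the assignment $a_i^{\pm1}\mapsto h_i^{\pm1}$, $b_j\mapsto 1$ satisfies every relation in $\calR(\gesy{y})$, so von Dyck yields an epimorphism $G\to H$ whose kernel contains $N$, giving the lower bound $|G/N|\geq|H|$ directly. The paper avoids the counting altogether by noting that setting all $b_i=1$ turns the presentation of $G/N$ into exactly $\calR_H$, and $\calR_H$ defines $H$; with either repair your argument reaches the same conclusion.
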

\begin{proof}
The relations in $\calR_V$ and $\calR_M$ show that  $N=\langle
b_1,\ldots,b_d\rangle$ is abelian and  normal. As the only relations in
$\calR$ whose left side only involves the generators $b_1,\ldots,b_d$ are
the relations in $\calR_V$, confluence of $\calR$ implies that no other
rules apply to a word in these generators, thus
$N$ is isomorphic to $V$.  The factor group can be described by setting all
$b_i$ to 1 in the relations; this produces the rules $\calR_H$, and those
define $H$. The rules in $\calR_M$ prove the claim about the action.
\end{proof}

\noindent {\bf Cohomology.} Vice versa, consider an extension $E$ of $H$ with  $V$ defined by $\gamma\in Z^2(H,V)$. Note that $E$ has underlying set $H\times V$ with multiplication
$(g,v)(h,w)=(gh,v^hw\gamma(g,h))$, see \eqref{extensionmult}. For the chosen generators $h_i$ of $H$,
corresponding to the rewriting system $\calR_H$, we set $u_i=(h_i,1)$, and
let $\gesy{u}=(u_1,\ldots,u_e)$. We also choose a basis $\gesy{v}$ for (the
image in $E$ of) $V$. The elements in $\gesy{u}\cup\gesy{v}$ satisfy
the relations in $\calR_V\cup\calR_M\cup\overline\calR_H$.
Furthermore, for any rule $l_i\to r_i \gesy{b}^{(x_{i,1},\ldots,x_{i,d})}$
in $\calR_H(\gesy{x})$, we can find an assignment for the $\{x_{i,j}\}_j$ to
values in $\Z_p$, such that this rule evaluated at $\gesy{u}\cup\gesy{v}$
holds. Thus there exists $\gesy{y}\in \Z_p^{rd}$, such that the rules in
$\calR(\gesy{y})$ hold in $E$. Since these rules imply a normal form for the $H$-part and for the $V$-part,
we  know that this rewriting system is confluent, that is, 
$\gesy{y}\in X$. Because of Lemma \ref{lemAlwaysExt}, this process defines a  surjective map
\[
\xi\colon Z^2(H,V)\to X,
\]
such that $E_\gamma$ is isomorphic to the group $\langle \mathcal{A}'\mid \calR(\xi(\gamma))\rangle$ determined by $\xi(\gamma)\in\Z_p^{dr}$. By construction, the $dr$ entries in $\xi(\gamma)$ are products of elements of the form $\gamma(a,b)^c$ with $a,b,c\in H$, so  $\xi$ is a linear map. Finally, if $\gamma\in\ker \xi$, then $\xi(\gamma)=\gesy{0}$ and the group
given by $\calR(\gesy{0})$ is a split extension (as the elements
representing $H$ form a subgroup), thus $\gamma\in B^2(H,V)$. We summarize:
 
\begin{theorem} 
\label{thmcohom}
The tuples $\gesy{x}$ that make $\calR$ confluent form a $\Z_p$-vector space
$X=\xi(Z^2(H,V))$ and $\ker\xi\leq B^2(H,V)$, hence  $H^2(H,V)\cong \xi(Z^2(H,V))/\xi(B^2(H,V))$.
\end{theorem}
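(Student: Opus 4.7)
The plan is to verify the three assertions of the theorem by building on the surjective map $\xi\colon Z^2(H,V)\to X$ already constructed in the discussion preceding the statement. First, I would observe that $\xi$ is $\Z_p$-linear: given $\gamma\in Z^2(H,V)$, each entry of $\xi(\gamma)$ is obtained by multiplying out the two sides of a parametrised rule in $\calR_H(\gesy{x})$ inside $E_\gamma$ according to \eqref{extensionmult} and reading off the element of $V$ required to equate them. Each elementary multiplication contributes a term of the form $\gamma(g,h)$ translated by the $\Z_p$-linear action of some element of $H$, so each entry of $\xi(\gamma)$ is a $\Z_p$-linear function of $\gamma$. This already implies that $\xi(Z^2(H,V))$ — and hence $X$ — is a $\Z_p$-subspace of $\Z_p^{dr}$.

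The main step, and the one I expect to require the most care, is the verification that $\ker\xi\leq B^2(H,V)$. Suppose $\xi(\gamma)=\gesy{0}$; then every rule in $\calR_H(\gesy{0})$ reduces to its unparametrised version $l_i\to r_i$, so the set of rules in $\calR(\gesy{0})$ whose left-hand sides involve only the $a$-generators is exactly $\widetilde\calR_H\cup\overline\calR_H=\calR_H$, the rewriting system presenting $H$. Consequently, inside $E=\langle\calA'\mid\calR(\gesy{0})\rangle$ the subgroup $\tilde H=\langle a_1,\ldots,a_e\rangle$ is a quotient of $H$; on the other hand $\tilde H$ surjects onto $E/V\cong H$ via the natural projection, forcing $\tilde H\cong H$ and $\tilde H\cap V=1$. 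Thus $\tilde H$ is a complement to $V$ in $E$, so $E\cong E_\gamma$ splits, which means $\gamma\in B^2(H,V)$.

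Finally, the desired isomorphism is a formal consequence of $\xi$ being a $\Z_p$-linear surjection $Z^2(H,V)\twoheadrightarrow X$ with $\ker\xi\leq B^2(H,V)$: applying the third isomorphism theorem to the chain $\ker\xi\leq B^2(H,V)\leq Z^2(H,V)$ yields
\[
X/\xi(B^2(H,V)) \cong (Z^2(H,V)/\ker\xi) / (B^2(H,V)/\ker\xi) \cong Z^2(H,V)/B^2(H,V) = H^2(H,V),
\]
which is the stated identification.
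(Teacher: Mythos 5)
Your proposal is correct and follows essentially the same route as the paper: linearity of $\xi$ read off from the cocycle entries, the observation that $\xi(\gamma)=\gesy{0}$ makes the $a$-generators generate a complement to $V$ (so $\gamma\in B^2(H,V)$), and the final isomorphism as a formal consequence; your von Dyck argument for the complement is just a slightly more detailed version of the paper's remark that the elements representing $H$ form a subgroup.
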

  
\subsection{Making the system confluent}
We now describe how to compute the images of $Z^2(H,V)$ and $B^2(H,V)$ under $\xi$, leading to a construction of $H^2(H,V)$ via Theorem \ref{thmcohom}.

We start with  $Z^2(H,V)$ and recall that $\xi(Z^2(H,V))=X$ as in \eqref{eqConf}. Using the Knuth-Bendix
method as described in \cite[Section 2.3]{simsbook}, to compute $X$ we need to consider
overlaps of left hand sides of rules in $\calR(\gesy{x})$. Set $\overline\calR_H(\gesy{x})=\calR_H(\gesy{x})\cup\overline\calR_H$, so \[\calR(\gesy{x})=\overline\calR_H\cup\calR_M\cup\calR_V\] is the union of three
sets. Thus, there will be six kinds of overlaps, which we now consider
separately. Overlaps of left hand sides of rules in $\calR_V$ reduce uniquely by the
definition of $\calR_V$. The left hand sides of two rules in $\calR_M$ cannot overlap because of
their specific form. Similarly, rules in $\calR_V$ and $\overline\calR_H$ cannot overlap as their
left hand sides are on disjoint alphabets. The overlap of a left hand side in $\calR_V$ and in $\calR_M$ will have the 
form $w_{\gesy{b}}a_i^{\pm 1}$ (where $w_{\gesy{b}}$ is a word expression in
$\gesy{b}$) and reduces uniquely as the action on $V$ is linear. A left hand side in $\calR_M$ and one in $\overline\calR_H$ will overlap in
the form $w_{\gesy{b}}w_{\gesy{a}}$; such expressions reduce uniquely as the
action on a module is a group action. This leaves overlaps of left hand sides in $\overline\calR_H$.  For this we note that the rules in $\calR_V\cup\calR_M$ allow us to
transform any word expression into a form $w=ab$ (called \emph{clean}) where
$a$ is a word in $\gesy{a}$, and $b$ a reduced word in $\gesy{b}$. We call
these factors the $\gesy{a}$-part and $\gesy{b}$-part, respectively.
Furthermore, the $\gesy{a}$-part of the clean form of a word is simply the
image of the word when setting all generators in $\gesy{b}$ to one.
As every rule in $\overline\calR_H$ corresponds to a rule in
$\calR_H$, and since $\calR_H$ is confluent, this together 
shows that the $\gesy{a}$-part of any reduced word will be unique. If we write a word as a product (in arbitrary order) of elements in
$\gesy{a}$ with powers of generators in $\gesy{b}$, the $\gesy{b}$-part
of a clean form of a word will be a normal form
$\gesy{b}^{(e_1,\ldots,e_d)}$, where the $e_i$ are homogeneous linear
functions in the exponents of the $\gesy{b}$-generators in the original
word. Reduction with rules in $\overline\calR_H$ will introduce powers of
$\gesy{b}$ with exponents given by variables in $\gesy{x}$. By reducing the overlap of two left hand sides of rules in $\overline\calR_H$,
and by reducing the resulting two words further to (arbitrary) reduced forms, we obtain clean words with equal $\gesy{a}$-parts and whose
$\gesy{b}$-parts are in normal form 
$\gesy{b}^{(e_1,\ldots,e_d)}$, where the $e_i$ are homogeneous linear
expressions in the variables $\gesy{x}$.

In conclusion, we have shown that the equality of the reduced forms of an overlap is  equivalent to a homogeneous linear equation in $\gesy{x}$; by processing all overlaps, we obtain a
homogeneous system of linear equations. Confluence of
$\calR(\gesy{x})$ for a particular set of values of $\gesy{x}$ then is
equivalent to $\gesy{x}$ satisfying this system over $\Z_p$; this allows us to compute $X=\xi(Z^2(H,V))$ as the solution space of a homogeneous linear equation system.

We can calculate  $\xi(B^2(H,V))$ in a way similar to the
establishment of the equations. For a function $\lambda\colon H\to V$ we
replace $a_i$ by $a_i\lambda(a_i)$ in the rules in $\calR_H(\gesy{0})$,
and use the rules in $\calR_M$ and $\calR_V$ to bring left and right side
into a clean form. Comparison of the remaining $\gesy{b}$-parts gives exponent
vectors that combine to the image of the associated cocycle under $\xi$.

\section{Practical aspects}\label{secPI}

\subsection{Hybrid groups}\label{sechybgroup}

\noindent We comment on the new data structure we have introduced  to make
our algorithm more efficient. Recall that once the respective module(s) $V$
are chosen, our process to construct $\hat H_{V,e}$ builds on
algorithms to perform the following calculations:

\begin{items}
\item[(1)] Calculate $H^2(H,V)$ and construct extensions for particular cocycles.
\item[(2)] Construct semidirect products of $H$ with elementary abelian subgroups $V^r$.
\item[(3)] Construct direct products of the groups computed in Step (2).
\item[(4)] Construct subgroups of the groups computed in Step (3) that map onto the full factor group $H$.
\item[(5)] Form factor groups of the groups  computed in Step (4), by factoring out evaluated relators.
\end{items}
In most of these constructions, the result will always be a group that is the
extension of $H$ with an elementary abelian subgroup. We represent such groups as formal polycyclic-by-finite extensions, given as
a finitely presented group. Contrary to the more general construction
in~\cite{sinananholt}, we form a confluent rewriting system for the whole
group, which is used to calculate normal forms. Such a rewriting system can
be combined easily from a rewriting system for $H$ (which we anyhow have for
the purposes of computing the cohomology group), a polycyclic generating set for
the normal subgroup, and cocycle information that describes the extension
structure.  We call such a computer representation a {\em hybrid group}.
We also assume that we are able to translate between the generators for $H$ arising
as image of the generators of $F$, and the generators of the rewriting
system.

In practice, we split the rewriting system for a hybrid group $E$ into a
rewriting system for the non-solvable factor $H=E/N$, a polycyclic generating set
for the normal subgroup $N$, automorphisms of $N$ that represent the action
of factor group generator representatives, and cofactors (in $N$) associated
to the rewriting rules for the factor. Arithmetic in $E$ then uses the
built-in arithmetic for polycyclic elements, as this will be faster than an
alternative rewriting implementation. Indeed, if $H$ has a solvable normal
subgroup, arithmetic will be faster if, in a given hybrid group, we modify
the extension structure to have the solvable normal subgroup as large as
possible.

As for the algorithmic requirements listed above, the
information available from the cohomology computation is exactly what is
needed to represent extensions as hybrid groups. The construction of (sub)direct products or semidirect products is similarly
immediate. For a subgroup $S$ of a hybrid group, given by generators, such
that $SN=E$ (this holds for all subgroups we encounter), we can calculate generators
for $S\cap N$ from the presentation for the factor group, and then determine an
induced polycyclic generating set for $S\cap N$. This allows us to represent $S$ by its own hybrid representation. In the same way, a polycyclic generating set for factor groups can be used
to represent factors by normal subgroups contained in $N$. All calculations of the quotient algorithm therefore can take place in hybrid
groups, all for the same factor $H$. Since the order of these groups is known, and since a rewriting system is a special case of a presentation, we could use representations induced by the
abelianization of subgroups (as suggested in~\cite{hulpke01}) to find
faithful permutation representations.

It clearly would be of interest to study the feasibility of these
hybrid groups for general calculations. Doing so will require significant
more infrastructure work for these groups than we have currently done. While
we are optimistic about the general practical feasibility of such a
representation (e.g.\ following~\cite{sinananholt}), we do not want to make
any such claim at this point.


\subsection{Cost estimates}\label{secCost}
It seems difficult to obtain  complexity statements that reflect practical behaviour. For example, even proving that computing with polycyclic groups has a favourable complexity is difficult
because of the challenges involving \emph{collection}, see \cite{NNcomp}. Despite these obstacles, it is still clearly beneficial  to be able to study a finitely presented group via a polycyclic quotient. The algorithmic framework considered in this work faces similar obstacles. Nevertheless, below we briefly discuss some cost estimates of some of the tasks required for the construction of $\hat H_{V,e}$.

Following~\cite{schm11}, obtaining a confluent rewriting system for $H$
essentially means to determine a composition series of $H$, and to look up
precomputed rewriting systems for the simple composition factors; the system
will asymptotically have $r \le\sqrt{\sz{H}}$ rules, though in many cases this
bound is far from reality. Determining $H^2(H,V)$ then requires solving a linear
system with $r\dim(V)$ variables and $r^2$ equations.  If $H$ is simple with
BN-pair, then the maximal length $\ell$ of a word in normal form for this
rewriting system (created in \cite{schm11}) is bounded by $\Oof(\log(\sz{H}))$, but it could be as large as $\sz{H}/2$ if $H$ is cyclic of
prime order. Assuming $\sz{H}$ has only small prime divisors, we get
$\ell=\Oof(\log\sz{H})$.

We now estimate the cost of multiplication in a hybrid group
$E$ with $E/N=H$ and $N$ abelian. Calculating the image of an element in $N$
under a word (of length up to $\ell$) representing an element of $H$ requires taking $\ell$ images of elements of $N$ under
homomorphisms, and  each such image requires $\log{|N|}$ multiplications in $N$. Considering elements in $E$ as pairs, the
first step of multiplying $h_1\cdot n_1$ and $h_2\cdot n_2$  in $E$ is to compute $n_1^{h_2}n_2$, at the cost of
$\ell\log{|N|}+1$ multiplications in $N$. Computing the product $h_1\cdot h_2$ then involves a
reduction sequence, say of length up to $s$, using the rewriting system for
$H$. Applying such an extended rewriting rule, say $w\to u\cdot n$ with (potential)
tail $n$, to a word $a\cdot w\cdot b$ results in $a\cdot u\cdot b\cdot n^b$ and  requires another homomorphic image computation. Multiplication in $E$ therefore requires up to $(s+1)\ell\log{|N|}$  products in $N$.

For constructing $\hat H_{V,e}$ via Theorem \ref{thmHve}, we form (for
$\Psi_{V,e}(F)$ and the extensions $E_i$) an extension of $H$ with
$e\dim(V)+\dim(H^2(H,V))$ copies of $V$. Even if the cohomology group is small, we work in an extension with a normal subgroup of order 
$\approx p^{\dim(V)^2}$, so $\log(\sz{N})\sim \dim(V)^2$. The cost of lifting an epimorphism $\varphi\colon G\to H$ to $\tau\colon G\to\tilde H$ with a
maximal $V$-homogeneous kernel is therefore proportional to
$v(s+1)\ell\dim(V)^2$, where $v$ is the sum of the lengths of the relators
defining $G$. 

In practical calculations, the main bottleneck for the
algorithm currently lies in the application of rewriting rules. At the
moment, this is done by a generic rewriting routine, operating on words. This could clearly be improved, for example by moving code from the system
library into the kernel, and by changing the order in which rules are
applied, in particular for cases with large elementary abelian subgroups.
Doing so, however is a substantial task on its own.


\subsection{Example computations}\label{secEx}

\noindent As a proof of concept and to illustrate the capabilities of our methods, we have implemented
the algorithms described here  in the computer algebra system {\sf
GAP}~\cite{GAP}. The implementation of the 2-cohomology group and the
construction of extensions will be available with release 4.11. Our code for
hybrid groups, the construction of $\hat H_{V,e}$, and for lifting of
epimorphisms is available at \url{github.com/hulpke/hybrid}. We illustrate the scope of the algorithm and the performance of its
implementation in a number of examples; the code for those examples can be
found in the file \texttt{example.g} in the same GitHub repository. 
Calculation times are in seconds on a 3.7GHz 2013 Mac Pro with 16GB of memory available. We write extensions as $A.B.C=A.(B.C)$, etc.

The examples we consider here are all not solvable. While our
implementation also works for solvable groups, it becomes non-competitive in comparison to a dedicated solvable quotient
implementation: The reason for this is, at least in part, that element
arithmetic in the constructed covers, as well as the calculation of
cohomology groups, both go through a generic rewriting system  in the
routines library, instead of using dedicated kernel routines for groups with
a polycyclic presentation.

\begin{example}
\label{exHeineken}
The \emph{Heineken group} $\mathcal{H}=\left\langle
a,b,c\mid [a,[a,b]]=c,[b,[b,c]]=a,[c,[c,a]]=b
\right\rangle$ is infinite and 2-generated.
By von Dyck's Theorem
\cite[Theorem~2.53]{handbook}, there is, up to automorphisms,
a unique epimorphism $\varphi\colon
\mathcal{H}\to H$ onto the alternating group $H=A_5$, defined by
$\varphi(a)=(1,2,4,5,3)$ and $\varphi(b)=(1,2,3,4,5)$. It has been  shown in
\cite[p.\ 725]{hr99} that the largest finite nilpotent quotient of
$\ker\varphi$ has order $2^{24}$.
We now apply our algorithm: $A_5$ has three irreducible modules over $\Z_2$.
The trivial module yields a cover $2^3.A_5$ and lifts $\varphi$ to a
quotient of type $2.A_5$.  The absolutely irreducible module of dimension 4
yields a cover $2^{4\cdot 4}.A_5$ (we write $p^{a\cdot b}$ for a $b$-fold
direct product of an $a$-dimensional module) and lifts $\varphi$ to a
quotient $2^4.A_5$, and the other module of dimension 4 yields a cover
$2^4.A_5$ that does not lift $\varphi$. Table~\ref{tabitercox} shows results
and timings when iterating the lifting process until the maximal quotient
for prime $p=2$ has been found and confirmed as maximal. The whole
calculation took about $4\sfrac{1}{2}$ minutes on a 3.7GHz 2013 Mac Pro with 16GB of memory available.
\end{example} 

\begin{example}
The group $G=G_{(3,4,15;2)}=\left\langle a,b\mid a^3,b^4,(ab)^{15},[a,b]^2\right\rangle$ is an example of presentations of type ``$(m,n,p;q)$'' going back to Coxeter, and it is
known that $G$ is infinite, see \cite{thom95}. The group $G$ has a unique quotient isomorphic
to $A_6$. For characteristic 3 we obtained the quotients in Table~\ref{tabitercox}.
\end{example}

\begin{example}
The group $G=G_{(3,7,15;10)}=\left\langle a,b\mid
a^3,b^7,(ab)^{10},[a,b]^{10}\right\rangle$ is also known to
be infinite. It has four different epimorphisms $\varphi_i\colon G\to A_{10}$, distinguished by having different kernels. Contrary to the previous two examples, it is hard to find usable presentations
for the corresponding kernels, as the index $10!/2$ is large. This
example is therefore intractable with traditional methods.
Here we only
considered the modules of small dimensions, as the next smallest
dimension would be $26$, resulting in the construction of an (abelian)
polycyclic group with $2\cdot 26^2=1352$ generators. Working with
automorphisms of such a group would end up being unreasonably slow, because {\sf GAP} currently has no special treatment of abelian polycyclic groups. The given runtimes also exclude the cost of determining the irreducible
modules.  For $\varphi_1$, the algorithm finds a lift to a group
$(2\times 2^{8\cdot 3}).A_{10}$ in 112 seconds. Lifting again produces a larger quotient $2^{1\cdot
5}.(2\times 2^{8\cdot 3}).A_{10}$ in 774 seconds.
In characteristic 3, we find a quotient $(3\time 3^{9\cdot 2}).A_{10}$, in
characteristic $5$ a quotient $5^{8\cdot 1}.A_{10}$. For $\varphi_2$ and $\varphi_4$, we find a quotient of type $2^{8\cdot 1}.A_{10}$, for
$\varphi_3$ a quotient of type $(2\times 2^{8\times 1}).A_{10}$, thus
showing that these $A_{10}$ quotients fall in at least three different
equivalence classes.
\end{example}

\vspace*{-2ex}

\renewcommand\arraystretch{1.1}
{\small\begin{table}[htb]
 \begin{tabular}[t]{r|r}
   Isomorphism type of quotient&Time\\\hline
$2.(2\times 2^4).A_5$&1\\
$2^4.2.(2\times 2^4).A_5$&2\\
$2^4.2^4.2.(2\times 2^4).A_5$&5\\
$(2\times 2).2^4.2^4.2.(2\times 2^4).A_5$&11\\
$2^4.(2\times 2).2^4.2^4.2.(2\times 2^4).A_5$&23\\
$2^4.2^4.(2\times 2).2^4.2^4.2.(2\times 2^4).A_5$&73\\
No larger quotient for $p=2$&140
\end{tabular}\quad\quad\quad\begin{tabular}[t]{r|r}
Isomorphism type of quotient&Time\\\hline
$(3\times 3^6).A_6$&7\\
$3^{4\cdot 2}.(3\times 3^6).A_6$&30\\
$(3^4\times 3^{6\cdot 2}\times 3^9).3^{4\cdot 2}.(3\times 3^6).A_6$&473
\end{tabular}\\[1ex]
\caption{Isomorphism types of the iterated quotients of the Heineken group (left) and $G_{(3,4,15;2)}$ (right); computations were carried out on a 3.7GHz 2013 Mac Pro with 16GB of memory available; times are given in seconds.}
\label{tabitercox}
\end{table}}
  
\begin{example}
To illustrate the behaviour with larger quotients, we consider prime $2$ and the group
\[G=\left\langle a,b\mid a^3,b^6,(ab)^6,(a^{-1}b)^6\right\rangle,\]which is example $P_{10}$ in~\cite{niem94}; this group has a quotient of isomorphism
type $A_7$. The maximal lift of this quotient with an elementary abelian
kernel is 
\[
(2\times 2^{4\cdot2}\times 2^{4\cdot2}\times 2^{14\cdot 3}\times
2^{20\cdot 7}).A_7,
\]
and is found in about 71 minutes on a 3.7GHz 2013 Mac Pro with 16GB of memory available. If we restrict to simple modules of dimension $<5$, then we find a lift to
$(2\times 2^{4\cdot2}\times 2^{4\cdot2}).A_7$ in 2 seconds. Under the same restrictions, we can lift this to
$
(2^{1\cdot 5}\times 2^{4\cdot 2}\times 2^{4\cdot 2}).%
(2\times 2^{4\cdot2}\times 2^{4\cdot2}).A_7$,
in 56 seconds. The third lift to
\[
(2^{1\cdot 6}\times 2^{4\cdot 2}\times 2^{4\cdot 2}).%
(2^{1\cdot 5}\times 2^{4\cdot 2}\times 2^{4\cdot 2}).%
(2\times 2^{4\cdot2}\times 2^{4\cdot2}).A_7,
\]
is found in 435 seconds, and the fourth lift to a quotient of size $2^{84}\cdot |A_7|$ is found after about 2 hours:
\[
(2^{1\cdot 8}\times 2^{4\cdot 2}\times 2^{4\cdot 2}).%
(2^{1\cdot 6}\times 2^{4\cdot 2}\times 2^{4\cdot 2}).%
(2^{1\cdot 5}\times 2^{4\cdot 2}\times 2^{4\cdot 2}).%
(2\times 2^{4\cdot2}\times 2^{4\cdot2}).A_7.
\]
\end{example}

\section*{Acknowledgments}
\noindent Both authors thank the anonymous referee for their very careful reading. Dietrich was supported by an Australian Research Council grant,
identifier DP190100317. Hulpke was supported in part by National Science
Foundation grant DMS-1720146. Parts of the research were performed during visits of the second author at
Hausdorff Institute in Bonn, Germany in autumn 2018 and at Monash University in
Melbourne, Australia in autumn 2019. The hospitality of both institutions
is gratefully acknowledged.

\appendix

\section{Proofs of some representation theory results}\label{app} 

\noindent For the sake of completeness, we provide the proofs  missing in Section \ref{secDefR}.
 
\begin{proof}[Proof of Lemma \ref{lemRad}]
  \begin{iprf}
  \item If $D<A$ is a maximal submodule, then $C/(C\cap D)$ embeds in the simple module $A/D$, so $C\cap D=C$ or $C\cap D< C$ is maximal. In both cases, $\rad(C)\leq C\cap D$, so $\rad(C)\leq \rad(A)$. This also shows $\rad(A)\oplus\rad(B)\leq \rad(A\oplus B)$.  Conversely, if $W<A$ and $V<B$ are maximal, then $W\oplus B, A\oplus V<A\oplus B$ are maximal, so $\rad(A\oplus B)\leq \rad(A)\oplus \rad(B)$.
  \item Let $D=\sigma(A)$, so $\sigma\colon A\to D$ is surjective and $\rad(D)\leq \rad(B)$ by a).  If $V<D$ is maximal and $W$ is the full preimage of $V$ under $\sigma$, then $\sigma$ induces an isomorphism  $A/W\cong D/V$, and  $W<A$ is maximal. Thus, $\rad(A)\leq W$, and so $\sigma(\rad(A))\leq V$. Thus, $\sigma(\rad(A))\leq \rad(D)$.
  \item The module $A/\rad(A)$ embeds into a direct sum of simple modules, hence is semisimple. If $\rad(A)\leq C$, then $A/C \cong (A/\rad(A))/(C/\rad(A))$ is semisimple. Conversely, if $A/C$ is semisimple, then $\rad(A/C)=0$ and so $\rad(A)\leq C$. To prove the first claim, let $E\leq A$ be the submodule with $E/C=\rad(A/C)$. Applying  b) to the  projection $A\to A/C$ yields $(\rad(A)+C)/C \leq \rad(A/C)$, so $\rad(A)+C\leq E$. Now $B=\rad(A)+C$ is a submodule of $A$ with $(A/C)/(B/C)\cong A/(C+\rad(A))$ semisimple. Thus, $\rad(A/C)\leq B/C$, and so $E=\rad(A)+C$.
  \end{iprf} 
\end{proof}

\begin{proof}[Proof of Lemma \ref{lemKH}] 
Write $N= \mathbb{F}H$. Being finite fields, 
$\underline{\mathbb{F}}\geq \mathbb{F}$ is a Galois extension, so
\cite[Theorem~1.8.4]{luxpahlings} proves a). If $S<N$ is a maximal
submodule, then  $\underline{\mathbb{F}}(N/S)\cong \underline{\mathbb{F}}
H/\underline{\mathbb{F}}S$ is semisimple, so
$\rad(\underline{\mathbb{F}} H)\leq \underline{\mathbb{F}}\rad(N)$.
Conversely, $\underline{\mathbb{F}} H/\underline{\mathbb{F}}\rad(N)\cong \underline{\mathbb{F}}(N/\rad(N))$ and  $N/\rad(N)$ is a direct sum of
simple $\mathbb{F}H$-modules; now a) shows that
$\underline{\mathbb{F}}(N/\rad(N))$ is a semisimple
$\underline{\mathbb{F}}H$-module. This implies
$\rad(\underline{\mathbb{F}}H)\leq \underline{\mathbb{F}}\rad(N)$, and
therefore equality is established. This proves b).
\end{proof}

 \begin{proof}[Proof of Theorem \ref{structureKH}]
Most of this  follows from the  Krull-Schmidt
Theorem~\cite[Theorem~1.6.6]{luxpahlings} and Remark 1.6.22(a), Theorem 1.6.24, Theorem 1.6.20(b) in \cite{luxpahlings}. It remains to provide a proof for the multiplicity $r_i$ in the case that $\mathbb{F}$ is not algebraically closed. Let $\underline{\mathbb{F}} D_i=C_1\oplus\cdots\oplus C_k$ be a direct sum of $\underline{\mathbb{F}}$-projective indecomposables; note that the $C_j$ are direct summands of the regular module $\underline{\mathbb{F}}H$. By Lemma \ref{lemKH}, each $C_j/\rad(C_j)$ is a simple $\underline{\mathbb{F}}H$-modules and the isomorphism type of $C_j$ is determined by the isomorphism type of $C_j/\rad(C_j)$; in particular, the direct sum constituents of $\underline{\mathbb{F}} (D_i/\rad(D_i))$ are the the simple factors  $C_j/\rad(C_j)$ and they are
mutually non-isomorphic. The multiplicity $r_i$ of $D_i$ as a direct summand of $\mathbb{F}H$ thus equals the multiplicity of $C_j$ as a direct summand of $\underline{\mathbb{F}}H$, which is the multiplicity of $C_j/\rad(C_j)$ as a direct summand of $\underline{\mathbb{F}}H/\rad(\underline{\mathbb{F}}H)$. Wedderburn's
theorem implies $r_i=\dim_{\overline{\mathbb{F}}} (C_j/\rad(C_j))$, see \cite[Remark~1.6.22(a), Theorem~1.6.24]{luxpahlings}. 
\end{proof}

{\small
\bibliographystyle{plain}

}

\end{document}